%% file: main.tex
\documentclass[11pt]{article}
\usepackage[english]{babel}
\usepackage{geometry}
\usepackage{amsmath,amsthm,amssymb,amscd,amsfonts}
\usepackage[all,cmtip]{xy}
\usepackage{tikz-cd}
\usetikzlibrary{calc,babel}
\usepackage{mathrsfs}
\usepackage{pdfpages}
\usepackage{float}
\usepackage{xcolor}
\usepackage{suffix}
\usepackage{mathtools}
\usepackage{soul}
\usepackage{gensymb}
\usepackage{enumitem}
\usepackage{calc}
\usepackage{csquotes}
\usepackage{gensymb}
\usepackage{placeins}
\usepackage{verbatim}
\usepackage{textcomp}
\usepackage{listings}
\usepackage{stmaryrd}
\usepackage{comment}
\usepackage[colorlinks = true, linkcolor = black, urlcolor  = black, citecolor = black, anchorcolor = black]{hyperref}
\usepackage[noabbrev,nameinlink]{cleveref} %cleveref must be last package loaded

\usepackage{soul}

\newtheorem{theorem}{Theorem}[section]
\newtheorem{proposition}[theorem]{Proposition}
\newtheorem{corollary}[theorem]{Corollary}
\newtheorem{lemma}[theorem]{Lemma}

\theoremstyle{definition}
\newtheorem{definition}[theorem]{Definition}
\newtheorem{remark}[theorem]{Remark}
\newtheorem{example}[theorem]{Example}

\theoremstyle{plain}

\usepackage{chngcntr}
\counterwithin{figure}{section}

\numberwithin{equation}{section}

\crefname{section}{\S}{\S\S}
\crefname{appendix}{App.}{Apps.}

\Crefname{equation}{Eq.}{Eqs.}
\Crefname{theorem}{Thm.}{Thms.}
\Crefname{lemma}{Lemma}{Lemmas}
\Crefname{corollary}{Cor.}{Cors.}
\Crefname{proposition}{Prop.}{Props.}

\Crefname{definition}{Def.}{Defs.}
\Crefname{notation}{Not.}{Nots.}
\Crefname{conjecture}{Conj.}{Conjs.}
\Crefname{remark}{Rk.}{Rks.}
\Crefname{problem}{Prob.}{Probs.}
\Crefname{example}{Ex.}{Exs.}

\Crefname{figure}{Fig.}{Figs.}
\Crefname{tabular}{Tab.}{Tabs.}

\DeclareMathOperator\codim{codim} %Codimension

\DeclareMathOperator\rank{rank} %Rank of a matrix
\DeclareMathOperator\spn{span} %Span (NB: \spn not a typo, primitive cmd. in \multicolumn pkcg.) [\Span]
\SetSymbolFont{stmry}{bold}{U}{stmry}{m}{n}

\parskip=\medskipamount
\begin{document}

\title{The Cartan--K{\"a}hler theorem for exterior differential systems on transitive Lie algebroids}

\author{Sonja Hohloch, Tom Mestdag and Kenzo Yasaka\\[2mm]
	{\small Department of Mathematics,  University of Antwerp,}\\
	{\small Middelheimlaan 1, 2020 Antwerpen, Belgium}
}

\date{}

\maketitle

\begin{abstract}
The notion of an exterior differential system (on a manifold) has recently been extended to the setting of a Lie algebroid. Here, we further develop the theory and we present two versions of the Cartan--K{\"a}hler theorem in the case where the anchor map of the Lie algebroid is surjective. We give an illustrative example and, as a concrete application, we make use of our results in a specific case of the so-called invariant inverse problem of the calculus of variations.
\end{abstract}

\vspace{3mm}

\textbf{Keywords:} transitive Lie algebroids, exterior differential systems, exterior differential calculus, Cartan--K{\"a}hler theorem, invariant inverse problem of the calculus of variations.

\vspace{3mm}

\textbf{Mathematics Subject Classification:}
53D17,
%Poisson manifolds; Poisson groupoids and algebroids,
58A15,
%Exterior differential systems (Cartan theory),
58A17,
%Pfaffian systems,
70F17.
%Inverse problems for (mechanical) systems of particles,

\maketitle

\section{Introduction}
\input{sections/introduction.tex}

\section{Preliminaries}\label{sec:preliminaries}
\input{sections/preliminaries.tex}

\section{Integral manifolds}\label{sec:intmanifolds}
\input{sections/intmanifolds.tex}

\section{Integral elements}\label{sec:eds}
\input{sections/eds.tex}

\section{The Cartan--K{\"a}hler theorem}\label{sec:ckthm}
\input{sections/cartankahler.tex}

\section{A simple example}\label{sec:applications}
\input{sections/applications.tex}

\section{The invariant inverse problem}\label{sec:inv-inverse}
\input{sections/inv-inverse.tex}

\section{Outlook}\label{sec:outlook}
\input{sections/outlook.tex}

\paragraph{Acknowledgements.} The authors thank the Research Fund of the University of Antwerp (BOF) for its support through the DOCPRO4 project 46954 ``Symmetry reduction and unreduction in mechanics and geometry".

\section*{\texorpdfstring{Appendix: Comparison with the approach in \cite{mestdag2026edsla}}{Appendix: Comparison with the approach in [21]}}
\input{sections/comparison.tex}

\end{document}

%% file: sections/introduction.tex
Since the early twentieth century, there has been an ever increasing trend to study systems of partial differential equations by means of a geometric approach. One such approach, advocated by e.g., {\'E}.\ Cartan \cite{cartan1922invariant,cartan1945eds}, is that of \emph{exterior differential systems (EDS)}. Here, the PDEs are represented by a collection $\mathcal{I}$ of differential forms on a manifold $\overline{M}$, satisfying the condition that $\mathcal{I}$ is closed under exterior differentiation.  The main goal is to compute \emph{integral manifolds} for a given EDS, which are submanifolds $M\subset \overline{M}$ satisfying the property that all forms in $\mathcal{I}$ vanish on $M$. Since any system of partial differential equations can be written as an EDS (with an independence condition), finding solutions to a system of PDEs can be given the desired geometric interpretation, namely, finding integral manifolds of its associated EDS. 

Since its conception, EDS has made many valuable contributions in the contexts of e.g.,\ the isometric embedding problem \cite{bryant1991exterior}, Cartan's equivalence method \cite{olver1995equivalence} and the inverse problem of the calculus of variations \cite{aldridge2003invprob,anderson1992invprob,do2016invprob}. Besides, one may also think of the Frobenius theorem as an application of EDS theory, in the special case that the EDS only consists of one-forms \cite{bryant1991exterior}: The differential form version of the Frobenius theorem states that the algebraic ideal generated by linearly independent 1-forms $\theta^1,\dots,\theta^s\in \Omega^1(\overline{M})$, which we denote by $\mathcal{I}=\langle \theta^1,\dots,\theta^s\rangle_{\mathrm{alg}}$, corresponds to an integrable distribution $\mathcal{D}=\bigcap_{i=1}^s \ker(\theta^i)\subset \mathrm{T}\overline{M}$ if and only if $\mathcal{I}$ is a differential ideal, i.e., that $\mathrm{d}\mathcal{I}\subset \mathcal{I}$.

%In the differential form version the statement of the Frobenius theorem is that the algebraic ideal generated by the linearly independent 1-forms $\theta^1,\dots,\theta^s$, which we denote by $\mathcal{I}=\langle \theta^1,\dots,\theta^s\rangle_{\mathrm{alg}}$, is a differential ideal, i.e., that $\mathrm{d}\mathcal{I}\subset \mathcal{I}$, which is precisely the requirement for the integrability of the EDS.

The theory of EDS aims to generalize the consideration of $1$-forms to forms of arbitrary degree. In that case, the Frobenius theorem no longer applies and we need a more general theorem. The analogue of distributions in the   Frobenius theorem are the so-called \emph{integral elements} of the EDS. While the condition that $\mathcal{I}=\langle \omega^1,\dots,\omega^s\rangle\subset \Omega^{\ast}(\overline{M})$ is a differential ideal is automatically assumed in the theory of EDS, there is also an additional compatibility condition that needs to hold for the integral elements. That additional condition is what is called \emph{ordinary}. The theorem which generalizes the Frobenius theorem in this setting is called the Cartan--K{\"a}hler theorem, which for EDS on manifolds says the following: {\emph{Let $\mathcal{I}\subset \Omega^{\ast}(\overline{M})$ be a {(real) analytic} differential ideal on a manifold $\overline{M}$. Let $E_{z}\subset \mathrm{T}_{z} \overline{M}$ be an ordinary integral element of $\mathcal{I}$. Then there exists an integral manifold $j\colon X\to \overline{M}$ which passes through $z$ whose tangent space at $z$ is $E_z$}} (cf.\ Chap.\ 3, \S~2, Cor. 2.3 of \cite{bryant1991exterior}).

This paper serves as an extension to our previous paper \cite{mestdag2026edsla}, where we have extended the theory of exterior differential systems from manifolds to \emph{Lie algebroids}. This extension was particularly useful in solving the invariant inverse problem of the calculus of variations (also studied in \cite{barbero2016invproblemliealgebroids,crampin2008invariantinv,muzsnay2005invariantinv}), namely: \emph{Does a $G$-invariant Lagrangian exist for a given $G$-invariant second-order system on a Lie group $G$?} We showed that the problem of finding a \emph{reduced multiplier matrix} for the second-order system can be formulated as an EDS problem on a very specific (transitive) Lie algebroid. In the current paper, we will outline techniques for determining the existence of integral manifolds for an EDS on a transitive Lie algebroid. In particular, we will prove two extensions of the previously mentioned Cartan--K{\"a}hler theorem.

In \Cref{sec:preliminaries}, we present the basic preliminaries, in an attempt to keep the paper self-contained. In particular, we give an introduction to Lie algebroids and define an exterior derivative operator on sections of the dual of the Lie algebroid. We will also recall the statement of the main existence and uniqueness theorem for analytic partial differential equations, called the Cauchy--Kowalevski theorem. The proof of the Cartan--K{\"a}hler theorem depends on this existence theorem. In \Cref{sec:intmanifolds}, we briefly revisit the ideas in \cite{mestdag2026edsla}, where we introduce prolongation Lie algebroids to define integral manifolds of a set of forms on a transitive Lie algebroid. Since our current approach deviates a bit from the one we used in \cite{mestdag2026edsla}, we have outlined these subtle differences in the Appendix.

In \Cref{sec:eds}, we extend the notion of integral elements. Similarly to the setting of EDS on manifolds, we may also view integral elements as infinitesimal versions of integral manifolds. In particular, we consider \emph{K{\"a}hler-regular} and (a generalization of) ordinary integral elements, which are fundamental in the formulation of the Cartan--K{\"a}hler theorem.

In \Cref{sec:ckthm}, we give the statement of the Cartan--K{\"a}hler theorem for real analytic transitive Lie algebroids, which tells us when we can extend a $p$-dimensional integral manifold to a $(p+1)$-dimensional one. In this generalized setting, the fact that the coefficients of the anchor map and the structure functions of such Lie algebroids are real analytic is crucial. We also present an extension of a corollary of the Cartan--K{\"a}hler theorem to such Lie algebroids, which, roughly speaking, shows that a so-called ``$l$-ordinary integral element" is the initial data required for an exterior differential system to be integrable. Arguably, this corollary is more useful than the Cartan--K{\"a}hler theorem, and we will also sometimes refer to it  by that name.

For instructive purposes, in \Cref{sec:applications} we demonstrate how the Cartan--K{\"a}hler theorem for Lie algebroids can be used to determine existence of integral manifolds in a simple setting. Specifically, we consider an EDS generated algebraically by $1$-forms on the Lie algebroid $\mathrm{T}\mathbb{R}^3\times \mathbb{R}^1\to \mathbb{R}^3$. We then investigate a more complicated application in \Cref{sec:inv-inverse}, inspired by the invariant inverse problem, whose EDS approach was discussed extensively in \cite{mestdag2026edsla}. In this case, the differential ideal includes a $2$-form as an algebraic generator, which makes it more difficult to determine the regularity properties of integral elements.

Finally, we conclude the paper with some suggestions for future work.

%% file: sections/preliminaries.tex
We start with the basics on the theory of Lie algebroids. One can refer to e.g., \cite{mackenzie1987algebroid,marle2008algebroid,kowalevski1875cauchy} for a more in-depth treatment. From now on, we denote the space of sections of a vector bundle $\tau\colon A\to M$ by $\Gamma(A)$. In particular, when $A$ is the tangent bundle of a manifold $M$, the collection of smooth vector fields is written as $\mathcal{X}(M)\coloneqq \Gamma(\mathrm{T}M)$.

\begin{definition}
An \emph{anchored vector bundle} is a vector bundle $\tau\colon A\to M$ over a manifold $M$, together with a linear bundle map $\rho\colon A\to \mathrm{T}M$ (we will refer to this map as the \emph{anchor map}).
\end{definition}

\begin{definition}
A \emph{Lie algebroid} is an anchored vector bundle, together with a Lie bracket $\llbracket \cdot,\cdot\rrbracket$ on $\Gamma(A)$, such that
\begin{equation}\label{eqn:leibniz}
    \llbracket \sigma_1,f\sigma_2\rrbracket=\rho(\sigma_1)(f)\sigma_2+f\llbracket \sigma_1,\sigma_2\rrbracket
\end{equation}
for $\sigma_1,\sigma_2\in \Gamma(A)$ and $f\in C^{\infty}(M)$. Here it is used that $\rho$ descends to a map between sections $\rho\colon \Gamma(A)\to \mathcal{X}(M)$ (which we also denote by $\rho$). We say that a Lie algebroid is \emph{transitive} if its anchor map $\rho$ is surjective.
\end{definition}

The tangent bundle $\mathrm{T}M$ of a manifold $M$ is a fundamental example of a Lie algebroid (with the Lie bracket of vector fields and $\rho=\mathrm{id}$). Another example is a Lie algebra (taking $M=\{\ast\}$ to be a point, and $\rho=0$).

If $A$ is transitive, the fibers of the anchor map form a Lie algebra bundle $\ker(\rho)$ over $M$. One obtains a short exact sequence of vector bundles
\begin{equation*}
    0\longrightarrow \ker(\rho)\longrightarrow A\stackrel{\rho}{\longrightarrow} \mathrm{T}M\longrightarrow 0.
\end{equation*}

We will mainly be interested in transitive Lie algebroids in the real analytic category.
\begin{definition}
\cite{fernandes2025analytic,jiang2025analytic,zung2003analytic} A \emph{(real) analytic Lie algebroid} is an anchored vector bundle in which $A$ is a real analytic vector bundle, $\rho$ is a real analytic bundle map, together with a Lie bracket $\llbracket \cdot,\cdot\rrbracket$ on the sheaf of real analytic sections $\mathcal{A}$, such that \eqref{eqn:leibniz} holds for $\sigma_1,\sigma_2\in \mathcal{A}$ and all real analytic functions $f$ on $M$.
\end{definition}

Henceforth, we will simply write `analytic' for `real analytic' for all objects of interest (manifolds, bundles, Lie algebroids, functions, sections and forms, etc.).

Locally, if $\{e_{\alpha}\}$ is a basis of sections of $A$, one can write
\begin{equation*}
    \rho(e_{\alpha})=\rho_{\alpha}^i(x)\frac{\partial}{\partial x^i}\in \mathcal{X}(M),
\end{equation*}
\begin{equation*}
    \llbracket e_{\alpha},e_{\beta}\rrbracket=L_{\alpha \beta}^{\gamma}(x)e_{\gamma}.
\end{equation*}
It is easy to see that $L_{\alpha \beta}^{\gamma}=-L_{\beta \alpha}^{\gamma}$. The local functions $\{L_{\alpha \beta}^\gamma\}$ are called the \emph{structure functions}. On an analytic Lie algebroid, if $\{e_{\alpha}\}$ is a basis of analytic sections, then the functions $\rho_{\alpha}^i$ and $L_{\alpha \beta}^\gamma$ are analytic.

We call sections of exterior powers of the dual of the Lie algebroid \emph{forms} on a Lie algebroid. We denote the set of $k$-forms on $\tau\colon A\to M$ by $\Lambda^k(A)$.

\begin{definition}
The Lie algebroid \emph{exterior derivative} $\delta\colon \Lambda^k(A)\longrightarrow \Lambda^{k+1}(A)$ is defined by
\begin{multline*}
    \delta \omega(\sigma_1,\dots,\sigma_{k+1})=\sum_{i=1}^{k+1} (-1)^{i+1}\rho(\sigma_i)\left(\omega(\sigma_1,\dots,\hat{\sigma}_i,\dots,\sigma_{k+1})\right)\\+\sum_{1\leq i<j\leq k+1} (-1)^{i+j} \omega(\llbracket \sigma_i,\sigma_j\rrbracket,\sigma_1,\dots,\hat{\sigma}_i,\dots,\hat{\sigma}_j,\dots,\sigma_{k+1}),
\end{multline*}
for any $\sigma_1,\dots,\sigma_{k+1}\in \Gamma(A)$ and $\omega\in \Lambda^k(A)$. In the above expression, a hat denotes the omission of that symbol. In particular, for $f\in C^{\infty}(M)=\Lambda^0(A)$:
\begin{equation*}
    \delta f(\sigma)=\rho(\sigma)f.
\end{equation*}
\end{definition}
It is not difficult to verify that $\delta^2=0$ and that if $\omega\in \Lambda^k(A)$ and $\theta\in \Lambda^l(A)$, then
\begin{equation*}
    \delta(\omega\wedge \theta)=\delta \omega\wedge \theta+(-1)^k \omega\wedge \delta \theta.
\end{equation*}
If $\{e^{\alpha}\}$ denotes the dual basis of $\{e_{\alpha}\}$, then it can be seen that
\begin{equation*}
    \delta f=\frac{\partial f}{\partial x^i}\rho_{\alpha}^i e^{\alpha},\quad \delta e^{\alpha}=-\frac{1}{2}L_{\beta \gamma}^{\alpha}\, e^{\beta}\wedge e^{\gamma}.
\end{equation*}

We now recall the main local existence and uniqueness theorem for analytic partial differential equations, namely, the \emph{Cauchy--Kowalevski theorem}. Suppose that $F$ is a vector-valued function in $n+1$ independent variables $x^1,\dots,x^n,y$ and let $\phi$ be a vector-valued function in $x^1,\dots,x^n$. Let $x=(x^1,\dots,x^n)$ and consider the Cauchy problem
\begin{equation*}
    \frac{\partial F}{\partial y}=G\left(x,y,F,\frac{\partial F}{\partial x}\right),
\end{equation*}
subject to the boundary conditions
\begin{equation*}
    F(x,0)=\phi(x).
\end{equation*}
Then if both $G$ and $\phi$ are analytic in a neighborhood of $0$, the Cauchy problem has a unique analytic solution near $0$.

Finally, we will need the following lemma in the proof of the Cartan--K{\"a}hler theorem.

\begin{lemma}\label{lem:hadamard}
Let $S$ be an $s$-dimensional embedded submanifold of an $n$-dimensional manifold $M$ and let $f$ be an analytic function on $M$ such that $f|_S=0$. Pick a point $p\in S$. Then there exists an adapted coordinate chart $(U,\varphi)=(U,x^1,\dots,x^n)$ of $M$ around $p$ relative to $S$ (i.e., $U\cap S=\{(x^1,\dots,x^n)\mid x^{s+1}=0,\dots,x^n=0\}$), and analytic functions $g_{s+1},\dots,g_n$ on $U$ such that
\begin{equation*}
    f(x^1,\dots,x^n)=\sum_{i=s+1}^n g_i(x^1,\dots,x^n) x^i
\end{equation*}
on $U$.
\end{lemma}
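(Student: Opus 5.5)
Since $S$ is an embedded submanifold, around $p$ there is an adapted chart $(U,x^1,\dots,x^n)$ in which $U\cap S=\{x^{s+1}=\dots=x^n=0\}$. I will shrink $U$ so that $\varphi(U)$ is a product of an open set $V\subset\mathbb{R}^s$ in the variables $x'=(x^1,\dots,x^s)$ with an open ball $B\subset\mathbb{R}^{n-s}$ centred at $0$ in the variables $x''=(x^{s+1},\dots,x^n)$. In particular $\varphi(U)$ is star-shaped with respect to the slice $x''=0$ along the $x''$-directions. (Because the statement concerns an analytic $f$, I take the manifold and $S$ analytic and the chart an analytic one. If only smooth charts were available, the $g_i$ below would still be smooth but not necessarily analytic, so the analytic category is implicitly assumed.)

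On this chart, set $F=f\circ\varphi^{-1}$. For fixed $(x',x'')\in\varphi(U)$, consider $t\mapsto F(x',tx'')$ for $t\in[0,1]$. This path stays in $\varphi(U)$ because $B$ is a ball. Since $f|_S=0$, we have $F(x',0)=0$, so the fundamental theorem of calculus and the chain rule give
\begin{equation*}
F(x',x'')=\int_0^1\frac{d}{dt}F(x',tx'')\,dt=\sum_{i=s+1}^n x^i\int_0^1\frac{\partial F}{\partial x^i}(x',tx'')\,dt .
\end{equation*}
So I will define $g_i(x)=\int_0^1 \partial_i F(x',tx'')\,dt$. This is exactly the Hadamard lemma applied in the normal directions.

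The only point that needs care is showing that the $g_i$ are analytic, and I will do this by working in the complex domain. After possibly shrinking $U$ around $p$, $\partial_iF$ extends holomorphically to a complex neighbourhood $W$ of $\overline{\varphi(U)}$. I will choose $W$ of the form $W'\times W''$, with $W''$ a complex polydisc or ball in the $x''$-variables, so that $(z',tz'')\in W$ whenever $(z',z'')\in W$ and $t\in[0,1]$. The integrand $\partial_iF(z',tz'')$ is then holomorphic in $z$ and jointly continuous in $(t,z)$. Differentiation under the integral sign (equivalently, Morera's theorem) shows that $\int_0^1\partial_iF(z',tz'')\,dt$ is holomorphic on $W$. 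Its restriction to real points is $g_i$, so each $g_i$ is real analytic.

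An alternative is to expand $F$ in a power series around $(x'_0,0)$, where every monomial not involving $x''$ vanishes, and group the terms by the first normal variable that appears. However, that argument only works near each point of the slice, and covering a whole chart requires the integral formula anyway. I therefore expect the analyticity of the integral to be the only nontrivial step, and I will handle it by holomorphic extension on a suitably shrunk product neighbourhood.
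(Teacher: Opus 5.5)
Your argument is correct and is essentially the paper's. Both write $f$ as the integral of its derivative along a path starting on the slice $x''=0$; the only difference is the path. The paper telescopes one normal coordinate at a time, so its $g_i(x)=\int_0^1\partial_i f(x^1,\dots,x^{i-1},tx^i,0,\dots,0)\,\mathrm{d}t$ depends only on $x^1,\dots,x^i$, whereas you use the single radial path $t\mapsto(x',tx'')$.

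Your version is more careful on two points the paper passes over. First, the paper only asks that $\varphi(U)$ be convex, which does not by itself keep the intermediate points $(x^1,\dots,x^{i-1},tx^i,0,\dots,0)$ in the chart; your product domain $V\times B$ (or a box) does. Second, the paper never justifies that the integrals $g_i$ are analytic, and your holomorphic-extension argument supplies this, with $B$ shrunk so that $W'\times W''$ lies inside the domain of the extension.
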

\begin{proof}
We adapt the proof of Hadamard's lemma as presented in Chap. 2 of \cite{nestruev2020hadamard}. Let $(U,\varphi)$ be an adapted chart of $M$ relative to $S$ centered at $p$, with $\varphi(U)$ convex. Then
\begin{align*}
    f(x^1,\dots,x^n)&=f(x^1,\dots,x^n)-f(x^1,\dots,x^s,0,\dots,0)\\&=\sum_{i=s+1}^n f(x^1,\dots,x^i,0\dots,0)-f(x^1,\dots,x^{i-1},0,\dots,0)\\&=\sum_{i=s+1}^n \int_0^1 \frac{\mathrm{d}}{\mathrm{d}t}f(x^1,\dots,tx^i,0,\dots,0)\, \mathrm{d}t\\&=\sum_{i=s+1}^n x^i \int_0^1 \partial_i f(x^1,\dots,tx^i,0,\dots,0)\, \mathrm{d}t\\&=\sum_{i=s+1}^n x^i g_i(x^1,\dots,x^n),
\end{align*}
where
\begin{equation*}
    g_i(x^1,\dots,x^n)\coloneqq \int_0^1 \partial_i f(x^1,\dots,tx^i,0,\dots,0)\, \mathrm{d}t. \qedhere
\end{equation*}
\end{proof}

%% file: sections/intmanifolds.tex
In this section, we discuss prolongations of Lie algebroids. From this, we will be able to define integral manifolds of a set of forms on a transitive Lie algebroid. The current setup differs from the one we had in \cite{mestdag2026edsla}. For readers already familiar with that paper, we have compared the two approaches in the Appendix at the end of the paper.

Let $B\to N$ be a Lie algebroid with anchor $\rho_B$ and bracket $\llbracket \cdot,\cdot\rrbracket_B$, and let $f\colon N'\to N$ be a smooth map. Consider the manifold
\begin{equation*}
    \mathcal{L}^f B\coloneqq \rho_B^{\ast}(\mathrm{T}N')=\{(b_{n},v'_{n'})\in B\times \mathrm{T}N'\mid \rho_B(b_{n})=\mathrm{T}f(v'_{n'})\},
\end{equation*}
together with the projection $\tau^f\colon \mathcal{L}^f B\to N'$, $(b_n,v'_{n'})\mapsto n'$, where by construction $f(n')=n$. Note that
\begin{equation*}
    (\tau^f)^{-1}(n')=\rank(B)+\dim(N')-\dim(\rho_B(B_{f(n')})+(\mathrm{T}_{n'}f)(\mathrm{T}_{n'}N')),
\end{equation*}
hence $\tau^f$ takes the structure of a vector bundle only if $\dim(\rho_B(B_{f(n')})+(\mathrm{T}_{n'}f)(\mathrm{T}_{n'}N'))$ is constant. In particular, this holds if $B$ is a transitive Lie algebroid (i.e., $\rho_B$ is surjective), or if $f$ is a submersion. In either of these cases, $\tau^f$ is in fact, a Lie algebroid (see \cite{deleon2005survey,martinez2002prolongation} for details). The second projection serves as the anchor map:
\begin{equation}\label{eqn:anchorprolongation}
    \rho^f\colon \mathcal{L}^f B\longrightarrow \mathrm{T}N',\quad (b_n,v'_{n'})\longmapsto v'_{n'}.
\end{equation}
We now construct its bracket. To this end, we explain what projectable sections of $\tau^f$ are. A section of $\tau^f$ is of the form $(\sigma,X)$, where $\sigma$ is a section of the pullback bundle $f^{\ast}B\to N'$, and $X$ is a vector field on $N'$. We say that $(\sigma,X)$ is \emph{projectable} if there exists a section $s\in \Gamma(B)$ such that $\mu^f\circ (\sigma,X)=s\circ f$, where $\mu^f$ is the first projection, i.e.,
\begin{equation*}
    \mu^f\colon \mathcal{L}^f B\longrightarrow B,\quad (b_n,v'_{n'})\longmapsto b_n.
\end{equation*}
For two projectable sections, we define the bracket on $\mathcal{L}^f B$ by
\begin{equation}\label{eqn:bracketprolongation}
    \llbracket (\sigma_1,X_1),(\sigma_2,X_2)\rrbracket (n')\coloneqq \left(\llbracket \sigma_1,\sigma_2\rrbracket_{B}(n),[X_1,X_2](n')\right).
\end{equation}
We can extend the definition of the bracket to all sections of $\tau^f$ by imposing the Leibniz identity.
\begin{definition}
The vector bundle $\tau^f\colon \mathcal{L}^f B\to N'$, together with the anchor \eqref{eqn:anchorprolongation}, and the bracket defined on projectable sections by \eqref{eqn:bracketprolongation}, is called the \emph{$f$-prolongation Lie algebroid of $B$}.
\end{definition}

We now consider a special case. Let $\bar{\tau}\colon \overline{A}\to \overline{M}$ be a transitive Lie algebroid of rank $N$ with anchor $\bar{\rho}$ and bracket $\llbracket \cdot,\cdot \rrbracket_{\overline{A}}$, and let $i\colon M\to \overline{M}$ be an immersion. We consider its $i$-prolongation Lie algebroid, which we denote by $A\coloneqq \mathcal{L}^i \overline{A}$, i.e.,
\begin{equation*}
    \tau\colon A\longrightarrow M,\quad (\bar{a}_{\bar{m}},v_m)\longmapsto m,
\end{equation*}
where
\begin{equation*}
    A=\bar{\rho}^*(\mathrm{T}M)  = \{ (\bar{a}_{\bar{m}},v_m)\in  \overline{A}\times \mathrm{T}M \mid \bar{\rho}(\bar{a}_{\bar{m}}) = \mathrm{T}i(v_m)\}.
\end{equation*}
By construction, $i(m)=\bar{m}$. We denote the anchor map and bracket of $A$ by $\rho\colon A\to \mathrm{T}M$ and $\llbracket \cdot,\cdot \rrbracket$ respectively. Note that $A$ has rank $L\coloneqq N+\dim(M)-\dim(\overline{M})$.

\begin{example}
Since we restrict ourselves to transitive Lie algebroids, we point out that there are numerous examples. The ones that are of particular interest to us include:
\begin{itemize}
    \item Tangent bundles and Lie algebras (as discussed in \Cref{sec:preliminaries}),
    \item If $\pi\colon P\to M$ is a principal $G$-bundle, then the \emph{Atiyah algebroid} $A=\mathrm{T}P\big\slash G$ is a transitive Lie algebroid over $M$, where $\rho$ is induced by the differential $\mathrm{T}\pi\colon \mathrm{T}P\to \mathrm{T}M$, and the Lie bracket is given by the standard Lie bracket of $G$-invariant vector fields. They are useful in the study of dynamical systems with symmetry (cf. e.g.,\ \S~6 of \cite{mestdag2026edsla} or \S~9 of \cite{deleon2005survey}),
    \item Let $B\to N$ be a transitive Lie algebroid, and let $f\colon N'\to N$ be a smooth map. Then the $f$-prolongation Lie algebroid of $B$ is also a transitive Lie algebroid (i.e., the anchor map $\rho^f$ is surjective). Indeed, let $v'_{n'}\in \mathrm{T}N'$. Then $\mathrm{T}f(v'_{n'})\in \mathrm{T}N$. Since $B$ is transitive, there exists $b_n\in B$ such that $\rho_B(b_n)=\mathrm{T}f(v'_{n'})$. Hence, for every $v'_{n'}\in \mathrm{T}N'$, there exists $(b_n,v'_{n'})\in \mathcal{L}^f B$, as required.
    \item The \emph{IP Lie algebroid}, and its $p$-prolongation (useful in the study of the time-dependent invariant inverse problem, see e.g., \S~7 of \cite{mestdag2026edsla} or \Cref{sec:inv-inverse} of this paper).
\end{itemize}
\end{example}
In what follows, it will be useful to construct a convenient basis of $\overline{A}$ and its $i$-prolongation (i.e., $A$). Following our discussion in \Cref{sec:preliminaries}, note that the following sequence of vector bundles is short exact:
\begin{equation*}
    0\longrightarrow \ker(\bar{\rho})\longrightarrow \overline{A}\stackrel{\bar{\rho}}{\longrightarrow} \mathrm{T}\overline{M}\longrightarrow 0.
\end{equation*}
Since the anchor map $\bar{\rho}\colon \overline{A}\to \mathrm{T}\overline{M}$ is surjective, this short exact sequence splits,  and we will assume throughout that the Lie algebroid is such that an analytic splitting exists. Suppose we denote a splitting of $\bar{\rho}$ by $\bar{\gamma}\colon \mathrm{T}\overline{M}\to \overline{A}$. Then $\overline{A}\simeq \ker(\bar{\rho})\oplus \bar{\gamma}(\mathrm{T}\overline{M})$. We may now choose an adapted basis of $\overline{A}$ as follows. Choose a basis $\{\bar{k}_a\}$ of $\ker(\bar{\rho})$, and let $(z^j)$ be coordinates on $\overline{M}$. By injectivity of $\bar{\gamma}$, it follows that $\left\{\bar{\gamma}_j\coloneqq\bar{\gamma}\left(\frac{\partial}{\partial z^j}\right)\right\}$ is a basis of $\bar{\gamma}(\mathrm{T}\overline{M})$. This extends to a basis $\{\bar{k}_a,\bar{\gamma}_j\}$ of $\overline{A}$. We denote its dual basis by $\{\bar{k}^a,\bar{\gamma}^j\}$.

From this, we will construct a basis of $A$ as follows.
\begin{definition}
Given a splitting $\bar{\gamma}$ of $\bar{\rho}$, we call the splitting $\gamma$ of $\rho$, defined by
\begin{equation*}
    \gamma(v_m)\coloneqq (\bar{\gamma}(\mathrm{T}i(v_m)),v_m),
\end{equation*}
the \emph{$(i,\bar{\gamma})$-induced splitting}.
\end{definition}
Note that $\gamma(v_m)\in A$, because
\begin{equation*}
    \bar{\rho}(\bar{\gamma}(\mathrm{T}i(v_m)))=\mathrm{T}i(v_m),
\end{equation*}
and that $\gamma$ is indeed a splitting because
\begin{equation*}
    \rho(\gamma(v_m))=\rho(\bar{\gamma}(\mathrm{T}i(v_m)),v_m)=v_m.
\end{equation*}

Now suppose that $(x^i)$ are coordinates on $M$. A basis of sections of $A$ is then given by $\{k_a,\gamma_i\}$, where
\begin{equation*}
    k_a\coloneqq (\bar{k}_a,0),\quad \gamma_i\coloneqq \gamma\left(\frac{\partial}{\partial x^i}\right)=\left(\bar{\gamma}\left(\mathrm{T}i\left(\frac{\partial}{\partial x^i}\right)\right),\frac{\partial}{\partial x^i}\right),
\end{equation*}
where $\gamma$ is the $(i,\bar{\gamma})$-induced splitting. Analogously, we denote its dual basis by $\{k^a,\gamma^i\}$.

Let $I\coloneqq \mu^i$, which we will call the \emph{$i$-induced map} be the first projection, i.e.,
\begin{equation*}
    I\colon A \longrightarrow \overline{A},\quad (\bar{a}_{\bar{m}},v_m) \longmapsto  \bar{a}_{\bar{m}}.
\end{equation*}
Then, the following diagram commutes:
\begin{equation*}
\begin{gathered}
\xymatrix{
   A \ar[d]_{\rho} \ar@<3pt>[r]^{I} \ar@/^-2pc/[dd]_{\tau}  & \overline{A} \ar[d]_{\bar{\rho}} \ar@/^2pc/[dd]^{\bar{\tau}} \\
   \mathrm{T}M \ar@<3pt>[r]^{\mathrm{T}i} \ar[d] & \mathrm{T}\overline{M} \ar[d] \\
   M \ar@<3pt>[r]^{i} & \overline{M}
}
\end{gathered}
\end{equation*}
\begin{proposition}\label{prop:I-injective}
$I$ is injective.
\end{proposition}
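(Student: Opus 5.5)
The plan is to argue directly from the definition of $A$ as the fibered product $\bar{\rho}^*(\mathrm{T}M)$ and from the fact that $i$ is an immersion. Suppose $(\bar{a}_{\bar{m}},v_m)$ and $(\bar{a}'_{\bar{m}'},v'_{m'})$ are two elements of $A$ with the same image under $I$, so $\bar{a}_{\bar{m}}=\bar{a}'_{\bar{m}'}$. In particular $\bar{m}=\bar{m}'$, and both pairs satisfy the defining relation. This gives
\begin{equation*}
    \mathrm{T}i(v_m)=\bar{\rho}(\bar{a}_{\bar{m}})=\bar{\rho}(\bar{a}'_{\bar{m}'})=\mathrm{T}i(v'_{m'}).
\end{equation*}
It then remains to show $v_m=v'_{m'}$.

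First, the base points must agree. The tangent map $\mathrm{T}i$ sends $v_m$ to a vector based at $i(m)$, so the displayed equality gives $i(m)=i(m')$. If $i$ is injective (an injectively immersed submanifold), this yields $m=m'$. Once the base points coincide, $\mathrm{T}_m i$ is injective because $i$ is an immersion, and so $v_m=v'_{m'}$.

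The main obstacle is this base-point step. For a general immersion that is not injective, $I$ fails to be injective as literally stated. For example, if $i(m)=i(m')$ with $m\neq m'$, then the zero elements $(0_{\bar{m}},0_m)$ and $(0_{\bar{m}},0_{m'})$ are distinct but have the same image. So I would state the proof under the standing convention that $i\colon M\to\overline{M}$ is an injective immersion, i.e.\ $M$ is an (immersed) submanifold, which matches the paper's later use of integral manifolds as submanifolds. Alternatively, one can prove the fibrewise version without that convention: for each $m\in M$, the restriction $I|_{A_m}\colon A_m\to\overline{A}_{i(m)}$ is an injective linear map. This suffices for all later uses, such as pulling back forms and identifying $A_m$ with a subspace of $\overline{A}_{i(m)}$.

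For the fibrewise statement, the argument is the same but linear. If $I(\bar{a},v_m)=0$, then $\bar{a}=0$, so $\mathrm{T}_m i(v_m)=\bar{\rho}(0)=0$, and injectivity of $\mathrm{T}_m i$ gives $v_m=0$. Hence $\ker(I|_{A_m})=0$.

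As a consistency check, the image of $I|_{A_m}$ is $\bar{\rho}^{-1}(\mathrm{T}_m i(\mathrm{T}_mM))$. This space has dimension $\operatorname{rank}\ker\bar{\rho}+\dim M=N-\dim\overline{M}+\dim M=L$, which matches the rank of $A$.
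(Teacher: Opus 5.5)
Your argument is correct and is essentially the paper's proof: equal images under $I$ force $\mathrm{T}i(v_m)=\bar{\rho}(\bar{a}_{\bar{m}})=\mathrm{T}i(v'_{m'})$ through the defining relation of $A$, and then the immersion property gives $v_m=v'_{m'}$. Your extra care about base points is justified. The paper's one-line appeal to ``$i$ is an immersion'' tacitly assumes $i$ is injective, in line with its use of ``submanifold''; otherwise, as you note, the fibrewise injectivity of $I|_{A_m}$ is exactly what is used later (e.g.\ $\dim I(A_m)=L$).
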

\begin{proof}
Suppose that $I(\bar{a}_{\bar{m}},v_m)=I(\bar{b}_{\bar{n}},w_n)$. Then $\bar{a}_{\bar{m}}=\bar{b}_{\bar{n}}$ by definition of $I$. It remains to prove that $v_m=w_n$. Since $A$ is a prolongation Lie algebroid, it follows that
\begin{equation*}
    \mathrm{T}i(v_m)=\bar{\rho}(\bar{a}_{\bar{m}})=\bar{\rho}(\bar{b}_{\bar{n}})=\mathrm{T}i(w_n),
\end{equation*}
but since $i$ is an immersion, it follows that $v_m=w_n$, as required.
\end{proof}
From $I$, we may also define a pullback operator $I^{\ast}\colon \Lambda^k(\overline{A})\to \Lambda^k(A)$, as follows. On functions $\bar f\in C^{\infty}(\overline{M})$ and one-forms $\bar\theta$ on $\overline{A}$, we set
\begin{equation*}
    (I^{\ast}{\bar f})(m) = \bar f(i(m)),\qquad (I^{\ast}{\bar \theta})_m(\bar{a}_{\bar{m}},v_m)= \bar\theta_{i(m)}(I(\bar{a}_{\bar{m}},v_m))=\bar{\theta}_{i(m)}(\bar{a}_{\bar{m}}),
\end{equation*}
which we extend to $k$-forms by imposing linearity of $I^{\ast}$ and the rule $I^{\ast}(\omega\wedge \theta)=I^{\ast}\omega\wedge I^{\ast}\theta$ for all $\omega\in \Lambda^k(\overline{A})$ and $\theta\in \Lambda^l(\overline{A})$.

Locally, if $i$ takes the form
\begin{equation*}
    i\colon M\longrightarrow \overline{M},\quad (x^i)\longmapsto (z^j=\bar{z}^j(x^i)),
\end{equation*}
then
\begin{equation*}
    I^{\ast}\bar{k}^a=k^a,\quad I^{\ast}\bar{\gamma}^j=\frac{\partial \bar{z}^j}{\partial x^i}\gamma^i.
\end{equation*}

We may now state the definition of an integral manifold of a set of forms on a (transitive) Lie algebroid. A subspace $\mathcal{I}\subset \Lambda^{\ast}(\overline{A})$ is called an \emph{algebraic ideal} if it decomposes as a direct sum of subspaces $\mathcal{I}^k\subset \Lambda^{k}(\overline{A})$, and is closed under the wedge product with arbitrary differential forms (i.e., $\mathcal{I}$ is a graded ideal of $\Lambda^{\ast}(\overline{A})$). If, in addition, it satisfies $\bar{\delta} \mathcal{I}\subset \mathcal{I}$, we say that $\mathcal{I}$ is a \emph{differential ideal}.

Suppose we are given a finite set of forms of arbitrary degree $\{\omega^1,\dots,\omega^n\}\subset \Lambda^{\ast}(\overline{A})$. Then the smallest algebraic ideal containing these forms is given by
\begin{equation*}
    \langle \omega^1,\dots,\omega^n\rangle_{\mathrm{alg}}\coloneqq \left\{\alpha^j\wedge \omega^j\mid \alpha^1,\dots,\alpha^n\in \Lambda^{\ast}(\overline{A})\right\},
\end{equation*}
and the smallest differential ideal containing these forms is given by
\begin{equation*}
    \langle \omega^1,\dots,\omega^n\rangle\coloneqq \left\{\alpha^j\wedge \omega^j+\beta^j\wedge \bar{\delta} \omega^j\mid \alpha^1,\dots,\alpha^n,\beta^1,\dots,\beta^n\in \Lambda^{\ast}(\overline{A})\right\}.
\end{equation*}
In particular, if the algebraic ideal generated by $\{\omega^1,\dots,\omega^n\}\subset \Lambda^{\ast}(\overline{A})$ is also a differential ideal then $\langle \omega^1,\dots,\omega^n\rangle=\langle \omega^1,\dots,\omega^n\rangle_{\mathrm{alg}}$.

\begin{definition}
We say that a submanifold $i\colon M\to \overline{M}$, or $i(M)\subset \overline{M}$ is an \emph{integral manifold} of a differential ideal $\mathcal{I}$ if for all $\theta\in \mathcal{I}$, we have that $I^{\ast}\theta=0$.
\end{definition}

We denote throughout this paper the collection of $k$-forms in a differential ideal $\mathcal{I}$ by $\mathcal{I}^k\coloneqq \mathcal{I}\cap \Lambda^k(\overline{A})$. For the rest of this paper, we assume that all differential ideals satisfy $\mathcal{I}^0=\varnothing$.

%% file: sections/eds.tex
In this section, we extend the concept of an integral element to transitive Lie algebroids. We refer to e.g.,\ \cite{bryant1982exterior,bryant1991exterior,kamran2005eds,mckay2017eds} for this well-known concept, in the standard theory of EDS on manifolds. Roughly speaking, integral elements can be viewed as infinitesimal versions of integral manifolds, in the sense that the defining property for an integral manifold is only required to hold on a fiber of $\bar{\tau}\colon \overline{A}\to \overline{M}$. We also consider special classes of integral elements, as they are useful in our extension of the Cartan--K{\"a}hler theorem.  

\subsection{Introduction}

Given a vector bundle $\pi\colon V\to X$, we denote its fiber at $x\in X$ by $V_x\coloneqq \pi^{-1}(\{x\})$. We define the \emph{$k$-th Grassmann bundle} of $V$ to be the vector bundle $G_k(V)\to X$ whose fibers at $x\in X$ are the $k$-th Grassmannians of $V_x$ (i.e., the set of all $k$-dimensional subspaces of $V_x$).

Let $\mathcal{I}$ be a differential ideal of a transitive Lie algebroid $\bar{\tau}\colon \overline{A}\to \overline{M}$. In this section, we fix a number $k\leq N= \rank(\overline{A})$.

\begin{definition}
Let $\bar{m}\in \overline{M}$ and let $E_{\bar m}$ be a $k$-dimensional linear subspace $E_{\bar m}\subset \overline{A}_{\bar{m}}$. We say that $(\bar{m},E_{\bar m})\in G_k(\overline{A})$ is an \emph{integral element} of $\mathcal{I}$ if
\begin{equation*}
    \theta(v_1,\dots,v_k)=0,
\end{equation*}
for all $\theta\in \mathcal{I}^k$ and $v_1,\dots,v_k\in E_{\bar m}$.
\end{definition}
In the following, most of the time, we will simply write $E_{\bar{m}}$ for $(\bar{m},E_{\bar{m}})\in G_k(\overline{A})$.

We denote the set of all $k$-dimensional integral elements by $V_k(\mathcal{I})$, i.e.,
\begin{equation*}
    V_k(\mathcal{I})\coloneqq \{E_{\bar m}\in G_k(\overline{A})\mid \theta_{E_{\bar m}}=0~\mathrm{for}~\mathrm{all}~\theta\in \mathcal{I}^k\},
\end{equation*}
where $\theta_{E_{\bar m}}$ is the restriction of $\theta|_{\bar{m}}$ to $E_{\bar m}$. One can then see the following results:

\begin{proposition}\label{prop:subspaceintelement}
Let $E_{\bar m}$ be a $k$-dimensional integral element of $\mathcal{I}$. Let $W_{\bar m}$ be a subspace of $E_{\bar m}$. Then $W_{\bar m}$ is also an integral element of $\mathcal{I}$.
\end{proposition}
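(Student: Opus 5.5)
The plan is to use the ideal property of $\mathcal{I}$ to reduce integrality of $W_{\bar m}$ to integrality of $E_{\bar m}$. Write $l=\dim W_{\bar m}\leq k$. We must show that $\theta(w_1,\dots,w_l)=0$ for every $\theta\in\mathcal{I}^l$ and all $w_1,\dots,w_l\in W_{\bar m}$. If $l=k$ then $W_{\bar m}=E_{\bar m}$ and there is nothing to prove. If $l=0$ the claim is vacuous, since $\mathcal{I}^0=\varnothing$. So assume $0<l<k$.

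First choose a basis $w_1,\dots,w_l$ of $W_{\bar m}$ and extend it to a basis $w_1,\dots,w_l,u_{l+1},\dots,u_k$ of $E_{\bar m}$. Next pick $k-l$ one-forms $\beta^{l+1},\dots,\beta^k\in\Lambda^1(\overline{A})$ with
\begin{equation*}
\beta^j|_{\bar m}(w_i)=0,\qquad \beta^j|_{\bar m}(u_r)=\delta^j_r .
\end{equation*}
Such forms exist by prescribing a linear functional on $\overline{A}_{\bar m}$ (extend the basis of $E_{\bar m}$ to one of $\overline{A}_{\bar m}$) and extending it to a local section via a local frame $\{\bar k^a,\bar\gamma^j\}$. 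If a globally defined form is needed, multiply by a bump function; in the analytic setting one can work locally, since only the value at $\bar m$ matters.

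Then, for $\theta\in\mathcal{I}^l$, set $\Theta\coloneqq \theta\wedge\beta^{l+1}\wedge\dots\wedge\beta^k$. Since $\mathcal{I}$ is an algebraic ideal, $\Theta\in\mathcal{I}^k$, so $\Theta$ vanishes on $E_{\bar m}$ because $E_{\bar m}$ is an integral element. Evaluate $\Theta(w_1,\dots,w_l,u_{l+1},\dots,u_k)$ using the shuffle formula for the wedge product. Every term in which some $\beta^j$ receives a $w_i$ vanishes. Every term in which $\theta$ receives some $u_r$ forces one of the $\beta$'s onto a $w_i$, so it vanishes as well. The surviving term is
\begin{equation*}
\theta(w_1,\dots,w_l)\det\bigl(\beta^j(u_r)\bigr)=\theta(w_1,\dots,w_l).
\end{equation*}
Hence $\theta(w_1,\dots,w_l)=0$. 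By multilinearity this gives $\theta|_{W_{\bar m}}=0$ for all $\theta\in\mathcal{I}^l$, so $W_{\bar m}\in V_l(\mathcal{I})$.

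The only mildly delicate step is constructing the $\beta^j$ as genuine sections of $\overline{A}^\ast$, since the ideal property applies to forms rather than to pointwise covectors. This is routine via local frames together with a bump function (or by localizing the ideal). The rest is a direct computation.
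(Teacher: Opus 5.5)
Your proof is correct and is essentially the paper's argument. The paper also wedges $\theta$ with the product $\eta=u^1\wedge\dots\wedge u^{k-l}$ of the covectors dual to a complement $U_{\bar m}$ of $W_{\bar m}$ in $E_{\bar m}$ and evaluates on the adapted basis, only phrasing it as a contradiction rather than directly; your remark on extending the pointwise covectors to genuine sections of $\overline{A}^\ast$ fills in a detail the paper leaves implicit.
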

\begin{proof}
Suppose for the sake of finding a contradiction that $W_{\bar m}\subset E_{\bar m}$ is an $l$-dimensional subspace of $E_{\bar m}$ which is not an integral element. Let $\{w_1,\dots,w_l\}$ be a basis of $W_{\bar m}$. Then there exists $\theta\in \mathcal{I}^l$ such that $\theta(w_1,\dots,w_l)\neq 0$. We now claim that there exists a form $\eta\in \Lambda^{k-l}(\overline{A})$ and $u_1,\dots,u_{k-l}\in E_{\bar{m}}$ such that $\eta\wedge \theta(u_1,\dots,u_{k-l},w_1,\dots,w_l)\neq 0$. Let $U_{\bar m}$ be a complementary subspace of $W_{\bar m}$ (i.e., we have $E_{\bar m}=U_{\bar m}\oplus W_{\bar m}$), and let $\{u_1,\dots,u_{k-l}\}$ be a basis of $U_{\bar m}$. It then extends to a basis $\{u_1,\dots,u_{k-l},w_1,\dots,w_l\}$ of $E_{\bar m}$. Let $\{u^1,\dots,u^{k-l},w^1,\dots,w^l\}$ be its dual basis and let $\eta=u^1\wedge \dots\wedge u^{k-l}$. Then
\begin{equation*}
    \eta\wedge \theta(u_1,\dots,u_{k-l},w_1,\dots,w_l)=\theta(w_1,\dots,w_l)\neq 0.
\end{equation*}
Since $\eta\wedge \theta\in \mathcal{I}^k$, this contradicts the assumption that $E_{\bar m}$ is an integral element of $\mathcal{I}$.
\end{proof}
The concept of an integral element and integral manifold, are in fact, related. To establish this relationship, we first prove the following lemma.
\begin{lemma}\label{lem:zeroform}
Let $\omega\in \Lambda^k(\overline{A})$. If $I^{\ast}\omega=0$ for every $k$-dimensional submanifold $i\colon M\to \overline{M}$, then $\omega=0$.
\end{lemma}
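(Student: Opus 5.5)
The idea is to show that $\omega$ vanishes pointwise: for every $\bar m\in\overline{M}$ and all $a_1,\dots,a_k\in\overline{A}_{\bar m}$ we prove $\omega_{\bar m}(a_1,\dots,a_k)=0$. We do this by choosing a $k$-dimensional submanifold through $\bar m$ whose prolongation Lie algebroid contains preimages of all the $a_j$ under $I$.

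First we record that, pointwise, $(I^{\ast}\omega)_m(b_1,\dots,b_k)=\omega_{i(m)}(I(b_1),\dots,I(b_k))$ for $b_j\in A_m$. This holds for functions and one-forms by the definition of $I^{\ast}$. It extends to $k$-forms because $I^{\ast}$ is linear, respects wedge products, and the pointwise pullback of multilinear forms along the linear map $I|_{A_m}$ has the same properties. Next we identify the image of a fibre: from the definition $A=\bar\rho^{\ast}(\mathrm{T}M)$ and \Cref{prop:I-injective},
\begin{equation*}
  I(A_m)=\bar\rho^{-1}\big(\mathrm{T}_m i(\mathrm{T}_mM)\big)\subset \overline{A}_{i(m)}.
\end{equation*}
Indeed, any $\bar a\in\overline{A}_{i(m)}$ with $\bar\rho(\bar a)=\mathrm{T}i(v_m)$ equals $I(\bar a,v_m)$.

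Now fix $\bar m$ and vectors $a_1,\dots,a_k\in\overline{A}_{\bar m}$, and let $n=\dim\overline{M}$. Suppose first that $k\le n$. The vectors $\bar\rho(a_1),\dots,\bar\rho(a_k)$ span a subspace of $\mathrm{T}_{\bar m}\overline{M}$ of dimension at most $k$, so we can enlarge it to a $k$-dimensional subspace $V\subset \mathrm{T}_{\bar m}\overline{M}$. In a coordinate chart $(z^j)$ centred at $\bar m$, let $M$ be a small open piece of the affine $k$-plane through $\bar m$ with tangent space $V$. This gives an embedded $k$-dimensional submanifold $i\colon M\to\overline{M}$ with $\mathrm{T}i(\mathrm{T}_{\bar m}M)=V$. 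Each $a_j$ then lies in $\bar\rho^{-1}(V)=I(A_{\bar m})$, so $a_j=I(b_j)$ for some $b_j\in A_{\bar m}$. The hypothesis $I^{\ast}\omega=0$ therefore gives
\begin{equation*}
  \omega_{\bar m}(a_1,\dots,a_k)=(I^{\ast}\omega)_{\bar m}(b_1,\dots,b_k)=0.
\end{equation*}

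If $k>n$, there are no $k$-dimensional submanifolds, so the hypothesis is vacuous and the statement must be read with $k\le n$. The only nontrivial large-$k$ situation, $k\ge n$ with the submanifold allowed to be open, reduces to taking $M$ an open subset of $\overline{M}$ and $i$ the inclusion. Then $I$ is a fibrewise isomorphism onto $\overline{A}|_M$, and the conclusion is immediate. I will state this caveat explicitly.

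The main point needing care is the pointwise description of $I^{\ast}$ on $k$-forms, since the paper defines $I^{\ast}$ only through generators. I will check it in the adapted bases, where $I^{\ast}\bar k^a=k^a$ and $I^{\ast}\bar\gamma^j=\frac{\partial\bar z^j}{\partial x^i}\gamma^i$ agree with the pointwise pullback. The remaining steps are linear algebra together with the transitivity of $\overline{A}$, which guarantees $\bar\rho^{-1}(V)$ contains every $a_j$.
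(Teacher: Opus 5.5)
Your proof is correct. It reaches the result by a different, coordinate-free route from the paper's.

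The paper argues by contraposition in coordinates. It expands $\omega$ in the adapted basis $\bar k^{b_1}\wedge\dots\wedge\bar k^{b_{k-l}}\wedge\bar\gamma^{j_1}\wedge\dots\wedge\bar\gamma^{j_l}$ and picks a coefficient that is nonzero at $\bar m$. It then takes $M$ to be the coordinate $k$-plane through $\bar m$ in the directions $z^{i_1},\dots,z^{i_l}$, padded with $k-l$ auxiliary coordinates $z^{c_u}$. Since $I^{\ast}\bar k^a=k^a$ and each $I^{\ast}\bar\gamma^j$ is either $\gamma^j$ or $0$, that coefficient survives in $I^{\ast}\omega$ as the coefficient of a basis $k$-form.

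You instead work pointwise. You combine $(I^{\ast}\omega)_m(b_1,\dots,b_k)=\omega_{i(m)}(I(b_1),\dots,I(b_k))$, which the paper itself uses in \eqref{eqn:intelementcomp}, with $I(A_m)=\bar\rho^{-1}(\mathrm{T}i(\mathrm{T}_mM))$. You then choose the $k$-plane so that it contains $\bar\rho(a_1),\dots,\bar\rho(a_k)$. The underlying mechanism is the same in both proofs: $\ker\bar\rho$ always lies in the image of $I$, so only the anchor images need to be tangent to $M$.

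Your version needs no splitting $\bar\gamma$ and no bookkeeping to show that the ``extra terms'' are independent. The paper's version exhibits an explicit coordinate submanifold that detects a prescribed nonzero component.

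Your degree caveat is genuine. For $k>\dim\overline{M}$ the hypothesis is vacuous, so any nonzero $k$-form is a counterexample; such forms exist whenever $\ker\bar\rho\neq 0$. The paper's proof silently assumes $k\le\dim\overline{M}$ when it selects $c_1,\dots,c_{k-l}$. Your pointwise formulation shows the natural repair: use submanifolds of dimension $\min(k,\dim\overline{M})$.

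One small correction: transitivity is not what places $a_j$ in $\bar\rho^{-1}(V)$; that holds by your choice of $V$. Transitivity is only needed so that the prolongation $A$ is a vector bundle at all.
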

\begin{proof}
We claim that if $\omega\in \Lambda^k(\overline{A})$ is non-zero, then there exists a $k$-dimensional submanifold $i\colon M\to \overline{M}$ such that $I^{\ast}\omega\neq 0$. Locally, $\omega\in \Lambda^k(\overline{A})$ takes the form
\begin{equation*}
    \omega=\sum_{l=0}^k \sum_{\substack{b_1<\dots<b_{k-l} \\ j_1<\dots<j_l}}\omega_{b_1,\dots,b_{k-l},j_1,\dots,j_l} \bar{k}^{b_1}\wedge \dots\wedge \bar{k}^{b_{k-l}}\wedge \bar{\gamma}^{j_1}\wedge \dots\wedge \bar{\gamma}^{j_l},
\end{equation*}
where $\omega_{b_1,\dots,b_{k-l},j_1,\dots,j_l}$ are functions on $\overline{M}$. Since $\omega\neq 0$, there exists $(a_1,\dots,a_{k-l},i_1,\dots,i_l)$ such that for at least one point $\bar{m}\in \overline{M}$, $\omega_{a_1,\dots,a_{k-l},i_1,\dots,i_l}(\bar{m})\neq 0$. Define $i\colon M\to \overline{M}$ by
\begin{equation*}
    (x^i)\longmapsto (z^j=\bar{z}^j(x))
\end{equation*}
where we choose the functions $\bar{z}^j$ in the following way. Fix $\bar{z}^{i_m}(x)=x^{i_m}$ for $m\in \{1,\dots,l\}$ (so that $I^{\ast}\bar{\gamma}^{i_m}=\gamma^{i_m}$). Choose any $k-l$ elements in the set $\{1,\dots,\dim(\overline{M})\}\setminus \{i_1,\dots,i_l\}$ and denote them by $\{c_1,\dots,c_{k-l}\}$. Choose $\bar{z}^{c_u}(x)=x^{c_u}$ for $u\in \{1,\dots,k-l\}$. Finally, for $j\not\in \{i_1,\dots,i_l,c_1,\dots,c_{k-l}\}$, let $\bar{z}^j(x)=z^j(\bar{m})$ be constants. Then
\begin{equation*}
    I^{\ast} \bar{\gamma}^{i_m}=\gamma^{i_m},\quad I^{\ast}\bar{\gamma}^{c_u}=\gamma^{c_u},\quad I^{\ast}\bar{\gamma}^j=0,\quad I^{\ast}\bar{k}^{a}=k^a.
\end{equation*}
We then have that
\begin{equation*}
    I^{\ast}\omega=(\omega_{a_1,\dots,a_{k-l},i_1,\dots,i_l}\circ i)\, k^{a_1}\wedge \dots\wedge k^{a_{k-l}}\wedge \gamma^{i_1}\wedge \dots\wedge \gamma^{i_l}+\mathrm{extra~terms},
\end{equation*}
where the extra terms are linearly independent from $k^{a_1}\wedge \dots\wedge k^{a_{k-l}}\wedge \gamma^{i_1}\wedge \dots\wedge \gamma^{i_l}$. In particular, at $m\in M$ with $i(m)=\bar{m}$,
\begin{equation*}
    (I^{\ast}\omega)_m=\omega_{a_1,\dots,a_{k-l},i_1,\dots,i_l}(\bar{m})\, k^{a_1}\wedge \dots\wedge k^{a_{k-l}}\wedge \gamma^{i_1}\wedge \dots\wedge \gamma^{i_l}+\mathrm{extra~terms}.
\end{equation*}
Hence, we have found a $k$-dimensional submanifold $M\subset \overline{M}$ such that $I^{\ast}\omega\neq 0$.
\end{proof}

\begin{proposition}\label{prop:intmfdintelement}
A submanifold $i\colon M\to \overline{M}$ is an integral manifold of $\mathcal{I}$ if and only if $I(A_m)$ is an integral element of $\mathcal{I}$ based at $\bar{m} = i(m)$ for all $m\in M$.
\end{proposition}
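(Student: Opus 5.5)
The argument reduces everything to one pointwise identity and then treats the two implications separately. Fix $m\in M$ and set $\bar m=i(m)$. By \Cref{prop:I-injective}, the restriction $I_m\colon A_m\to\overline{A}_{\bar m}$ is an injective linear map. Hence $I(A_m)$ is an $L$-dimensional subspace of $\overline{A}_{\bar m}$, with $L=\rank(A)$, and it makes sense to ask whether it is an integral element in $V_L(\mathcal{I})$.

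The identity I need is
\begin{equation*}
    (I^{\ast}\theta)_m(a_1,\dots,a_k)=\theta_{\bar m}\bigl(I(a_1),\dots,I(a_k)\bigr)
\end{equation*}
for all $\theta\in\Lambda^k(\overline{A})$ and $a_1,\dots,a_k\in A_m$. In other words, $(I^{\ast}\theta)_m$ is the ordinary linear-algebra pullback of $\theta_{\bar m}$ along $I_m$. For $k=1$ this is exactly the defining formula of $I^{\ast}$. In general, both sides are linear in $\theta$ and multiplicative under wedge products, since the algebraic pullback of a linear map respects $\wedge$. Locally every $k$-form is a sum of function multiples of wedges of the one-forms $\bar k^a,\bar\gamma^j$, so the two sides agree. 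This is the only step needing any care, and it is routine.

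($\Rightarrow$) Suppose $i$ is an integral manifold and let $\theta\in\mathcal{I}^L$. Any vectors in $I(A_m)$ can be written as $I(a_1),\dots,I(a_L)$ with $a_j\in A_m$. The identity then gives $\theta_{\bar m}(I(a_1),\dots,I(a_L))=(I^{\ast}\theta)_m(a_1,\dots,a_L)=0$. So $I(A_m)$ is an integral element.

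($\Leftarrow$) Suppose $I(A_m)$ is an integral element for every $m$, and let $\theta\in\mathcal{I}^k$; recall $k\ge 1$ since $\mathcal{I}^0=\varnothing$. I split into two cases.
\begin{itemize}
    \item If $k>L$, then $I^{\ast}\theta$ is a $k$-form on a bundle of rank $L<k$, so it vanishes automatically.
    \item If $k\le L$, take $a_1,\dots,a_k\in A_m$. If the vectors $I(a_j)$ are linearly dependent, then $\theta_{\bar m}(I(a_1),\dots,I(a_k))=0$ by multilinearity and skew-symmetry. Otherwise they span a $k$-dimensional subspace of $I(A_m)$. By \Cref{prop:subspaceintelement} this subspace is itself an integral element, so $\theta_{\bar m}$ vanishes on these vectors.
\end{itemize}
In both cases the identity gives $(I^{\ast}\theta)_m=0$. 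Since $m$ and $\theta$ were arbitrary, $I^{\ast}\theta=0$ for all $\theta\in\mathcal{I}$, so $i$ is an integral manifold.

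I do not expect a genuine obstacle. The pointwise identity for $I^{\ast}$ is the main thing to state carefully. \Cref{lem:zeroform} should not be needed for this direction of the correspondence.
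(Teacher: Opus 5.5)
Your proof is correct. Your forward direction and the cases $\deg\theta\ge L$ coincide with the paper's argument. The routes differ only in the case $l=\deg\theta<L$ of the converse.

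There the paper does not evaluate $(I^{\ast}\theta)_m$ directly. It pulls $I^{\ast}\theta$ back along every $l$-dimensional submanifold $j\colon N\to M$ and notes that $(I\circ J)((\mathcal{L}^jA)_n)\subset I(A_{j(n)})$. With \Cref{prop:subspaceintelement} this gives $J^{\ast}(I^{\ast}\theta)=(I\circ J)^{\ast}\theta=0$. It then invokes \Cref{lem:zeroform}, applied to the transitive algebroid $A\to M$, to conclude $I^{\ast}\theta=0$.

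That detour already rests on the same pointwise evaluation identity you isolate, only for the composite $I\circ J$. Your direct argument therefore reaches the conclusion with less machinery. You evaluate $(I^{\ast}\theta)_m$ on $l$ vectors of $A_m$, dispose of the linearly dependent case trivially, and apply \Cref{prop:subspaceintelement} to their span inside $I(A_m)$. This needs no second prolongation $\mathcal{L}^jA$, no functoriality $(I\circ J)^{\ast}=J^{\ast}I^{\ast}$, and no appeal to \Cref{lem:zeroform}, which requires its own local-coordinate construction of test submanifolds.

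Your route also treats all degrees $k\le L$ uniformly. It makes explicit the identification of $(I^{\ast}\theta)_m$ with the linear-algebra pullback of $\theta_{\bar m}$ along $I_m$, which the paper uses tacitly in \eqref{eqn:intelementcomp}. The paper's version buys at most a phrasing closer to the submanifold viewpoint, where vanishing is detected by restriction to all $l$-dimensional submanifolds, but the proof does not need it. You are right that \Cref{lem:zeroform} can be dispensed with here.
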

\begin{proof}
Suppose that $i\colon M\to \overline{M}$ is an integral manifold. Since $I$ is injective, the dimension of $I(A_m)$ is $L$ (i.e., the rank of $\tau\colon A\to M$). Now let $\alpha\in \mathcal{I}^L$ and $m\in M$ be arbitrary. For each choice of $\alpha$ and $m$, let $v_1,\dots,v_L\in A_m$ be arbitrary. Then $I(v_1),\dots,I(v_L)\in I(A_m)$, and we have that
\begin{equation}\label{eqn:intelementcomp}
    0=I^{\ast}\alpha(v_1,\dots,v_L)=\alpha(I(v_1),\dots,I(v_L)).
\end{equation}
Since the choice of $v_1,\dots,v_L$ was arbitrary, it follows that $I(A_m)$ is an integral element of $\mathcal{I}$ for all $m\in M$.

Conversely, suppose that $I(A_m)$ is an integral element based at $i(m)$ for all $m\in M$, and consider an arbitrary $\theta\in \mathcal{I}$. If $\deg(\theta)>L$, then $I^{\ast}\theta=0$ for dimension reasons. If $\deg(\theta)=L$, then $I^{\ast}\theta=0$ by the same computation as in \eqref{eqn:intelementcomp}. We now consider the case where $l\coloneqq \deg(\theta)<L$. Let $j\colon N\to M$ be an $l$-dimensional submanifold of $M$. Let $J\colon \mathcal{L}^j A\to A$ be the $j$-induced map. Note that for $n\in N$:
\begin{equation*}
    (I\circ J)((\mathcal{L}^j A)_n)\subset I(A_{j(n)}).
\end{equation*}
By \Cref{prop:subspaceintelement}, we see that
\begin{equation*}
    (I\circ J)^{\ast}\theta(v_1,\dots,v_l)=\theta((I\circ J)(v_1),\dots,(I\circ J)(v_l))=0,
\end{equation*}
for all $v_1,\dots,v_l\in (\mathcal{L}^j A)_n$. Hence $(I\circ J)^{\ast}\theta=J^{\ast}(I^{\ast}\theta)=0$ for all $l$-dimensional submanifolds $j\colon N\to M$. It follows from \Cref{lem:zeroform} that $I^{\ast} \theta=0$. 
\end{proof}

In the statement of our main theorem (\Cref{thm:maintheorem}), we wish to extend a $p$-dimensional integral manifold to a $(p+1)$-dimensional one. The infinitesimal analogue of this is to extend a $k$-dimensional integral element $E_{\bar{m}}\in V_k(\mathcal{I})$ to a $(k+1)$-dimensional integral element $E_{\bar{m}}^+\in V_{k+1}(\mathcal{I})$ containing $E_{\bar{m}}$. To do this, it is useful to introduce the concept of a polar space.
\begin{definition}
Let $\{e_1,\dots,e_k\}$ be a basis of $E_{\bar{m}}\subset \overline{A}_{\bar{m}}$. We define the \emph{polar space} of $E_{\bar{m}}$ to be the vector space
\begin{equation*}
    H(E_{\bar{m}})\coloneqq \{v\in \overline{A}_{\bar{m}}\mid \varphi(v,e_1,\dots,e_k)=0~\mathrm{for}~\mathrm{all}~\varphi\in \mathcal{I}^{k+1}\}.
\end{equation*}
\end{definition}
Note that $E_{\bar{m}}\subset H(E_{\bar{m}})$. With the following result, we can see why this definition is useful.
\begin{proposition}\label{prop:polar}
Let $E_{\bar{m}}\in V_k(\mathcal{I})$. Then a $(k+1)$-dimensional subspace $E_{\bar{m}}^+\subset \overline{A}_{\bar{m}}$ containing $E_{\bar{m}}$ is an integral element of $\mathcal{I}$ if and only if it satisfies $E_{\bar{m}}^+\subset H(E_{\bar{m}})$.
\end{proposition}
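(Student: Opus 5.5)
The plan is to choose a basis of $E_{\bar m}^+$ adapted to $E_{\bar m}$ and then check both implications directly from the definitions of an integral element and of the polar space $H(E_{\bar m})$. Since $E_{\bar m}\subset E_{\bar m}^+$ and $\dim E_{\bar m}^+=k+1$, we pick $v\in E_{\bar m}^+\setminus E_{\bar m}$. Then $\{v,e_1,\dots,e_k\}$ is a basis of $E_{\bar m}^+$, where $\{e_1,\dots,e_k\}$ is the basis of $E_{\bar m}$ used to define $H(E_{\bar m})$. We first record that $H(E_{\bar m})$ does not depend on the chosen basis of $E_{\bar m}$. Indeed, a change of basis multiplies $\varphi(v,e_1,\dots,e_k)$ by the nonzero determinant of the transition matrix, so the vanishing condition is unchanged.

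For the forward direction, assume $E_{\bar m}^+\in V_{k+1}(\mathcal{I})$. Every $w\in E_{\bar m}^+$ can be written as $w=\lambda v+\sum_i\mu^i e_i$. For each $\varphi\in\mathcal{I}^{k+1}$, multilinearity and alternation give
\begin{equation*}
\varphi(w,e_1,\dots,e_k)=\lambda\,\varphi(v,e_1,\dots,e_k),
\end{equation*}
and the right-hand side vanishes because $v,e_1,\dots,e_k\in E_{\bar m}^+$. Hence $w\in H(E_{\bar m})$, which shows $E_{\bar m}^+\subset H(E_{\bar m})$.

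For the converse, assume $E_{\bar m}^+\subset H(E_{\bar m})$ and let $\varphi\in\mathcal{I}^{k+1}$. The restriction $\varphi_{E_{\bar m}^+}$ is an alternating $(k+1)$-form on a $(k+1)$-dimensional space. It is therefore determined by its single value on the basis $\{v,e_1,\dots,e_k\}$, namely $\varphi(v,e_1,\dots,e_k)$. This value is zero because $v\in H(E_{\bar m})$. Consequently $\varphi_{E_{\bar m}^+}=0$ for every $\varphi\in\mathcal{I}^{k+1}$, which is precisely the condition $E_{\bar m}^+\in V_{k+1}(\mathcal{I})$.

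There is no real obstacle in this argument. The only point needing care is that the converse relies on the fact that top-degree alternating forms on $E_{\bar m}^+$ are determined by their value on one basis. Note also that the hypothesis $E_{\bar m}\in V_k(\mathcal{I})$ is never used. This is consistent with the definition of an integral element, which only tests forms of degree equal to the dimension. It would only become relevant if one wanted the remark that $E_{\bar m}\subset H(E_{\bar m})$ in general, or wished to invoke \Cref{prop:subspaceintelement}.
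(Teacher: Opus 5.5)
Your proof is correct and follows essentially the same route as the paper: you write $E_{\bar m}^+=E_{\bar m}+\mathbb{R}v$ and reduce integrality of $E_{\bar m}^+$ to the vanishing of $\varphi(v,e_1,\dots,e_k)$ for all $\varphi\in\mathcal{I}^{k+1}$, and you spell out the alternation step ($E_{\bar m}^+\subset H(E_{\bar m})$ if and only if $v\in H(E_{\bar m})$), which the paper leaves implicit. One small correction to your closing aside: the inclusion $E_{\bar m}\subset H(E_{\bar m})$ holds for every $k$-dimensional subspace by alternation alone, as your own forward-direction computation shows, so it does not need $E_{\bar m}\in V_k(\mathcal{I})$ either.
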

\begin{proof}
Suppose that $E_{\bar{m}}^+=E_{\bar{m}}+\mathbb{R}v$ and let $e_1,\dots,e_k$ be a basis of $E_{\bar{m}}$. Then $E_{\bar{m}}^+\in V_{k+1}(\mathcal{I})$ if and only if $\varphi(v,e_1,\dots,e_k)=0$ for all $\varphi\in \mathcal{I}^{k+1}$, which holds if and only if $v\in H(E_{\bar{m}})$.
\end{proof}
The space of all extensions is a (possibly empty) projective space $\mathbb{P}\left(H(E_{\bar{m}})\big\slash E_{\bar{m}}\right)$. It is useful to determine the dimension of this projective space to determine whether extensions exist and, if they do, for understanding the size of the space of extensions. To this end, we define a function $r\colon V_k(\mathcal{I})\to \mathbb{Z}_{\geq -1}$ by
\begin{equation*}
    r(E_{\bar{m}})\coloneqq \dim(H(E_{\bar{m}}))-(k+1).
\end{equation*}
Note that $r(E_{\bar{m}})=-1$ if and only if $E_{\bar{m}}$ lies in no $(k+1)$-dimensional integral element of $\mathcal{I}$. When $r(E_{\bar{m}})\geq 0$, the space of $(k+1)$-dimensional integral elements of $\mathcal{I}$ which contain $E_{\bar{m}}$ is a real projective space of dimension $r(E_{\bar{m}})$.

\subsection{K{\"a}hler-regular integral elements and manifolds}

In order to define the notion of K{\"a}hler-regularity, we need a few preliminary definitions. If $\mathcal{F}\subset C^{\infty}(X)$ is a set of smooth functions on a smooth manifold $X$, we denote by $Z(\mathcal{F})\subset X$ the set of common zeros of the functions in $\mathcal{F}$.
\begin{definition}
\cite{bryant1991exterior} We say that $x\in Z(\mathcal{F})$ is an \emph{ordinary zero} of $\mathcal{F}$ if there exists a neighborhood $V\subset X$ of $x$ and a finite set of functions $f^1,f^2,\dots,f^q\in \mathcal{F}$ such that
\begin{enumerate}
    \item Their differentials $\mathrm{d}f^1,\dots,\mathrm{d}f^q$ are independent on $V$,
    \item $Z(\mathcal{F})\cap V$ is their set of common zeros, i.e.,
    \begin{equation*}
        Z(\mathcal{F})\cap V=\{y\in V\mid f^1(y)=\cdots=f^q(y)=0\}.
    \end{equation*}
\end{enumerate}
\end{definition}
Let $\Omega$ be any $k$-form on $\overline{A}$. Then we denote by $G_k(\overline{A},\Omega)$ the set of all $E_{\bar{m}}\in G_k(\overline{A})$ such that $\Omega_{E_{\bar{m}}}\neq 0$. If $\theta$ is any other $k$-form on $\overline{A}$, we define a function $\theta_{\Omega}$ on $G_k(\overline{A},\Omega)$ by
\begin{equation}\label{eqn:formfunction}
    \theta_{E_{\bar{m}}}=\theta_{\Omega}(E_{\bar{m}})\Omega_{E_{\bar{m}}},
\end{equation}
where we consider $\theta_{E_{\bar{m}}}$ and $\Omega_{E_{\bar{m}}}$ as $k$-forms on $\Lambda^k(E_{\bar{m}})$. This function is well-defined since $\Lambda^k(E_{\bar{m}})$ is $1$-dimensional with basis $\Omega_{E_{\bar{m}}}$.

Now consider the set of functions
\begin{equation*}
    \mathcal{F}_{\Omega}(\mathcal{I})\coloneqq \{\theta_{\Omega}\mid \theta\in \mathcal{I}^k\}
\end{equation*}
on the manifold $X=G_k(\overline{A},\Omega)$. Since $\theta_{\Omega}(E_{\bar{m}})=0$ if and only if $\theta_{E_{\bar{m}}}=0$, the set of common zeros of $\mathcal{F}_{\Omega}(\mathcal{I})$ is given by $V_k(\mathcal{I},\Omega)\coloneqq V_k(\mathcal{I})\cap G_k(\overline{A},\Omega)$.
\begin{definition}
An integral element $E_{\bar{m}}\in V_k(\mathcal{I})$ is called \emph{K{\"a}hler-ordinary} if there exists a $k$-form $\Omega$ with $\Omega_{E_{\bar{m}}}\neq 0$ such that $E_{\bar{m}}$ is an ordinary zero of $\mathcal{F}_{\Omega}(\mathcal{I})$. In other words, there exist $k$-forms $\beta^1,\dots,\beta^q\in \mathcal{I}^k$ such that $V_{k}(\mathcal{I})$ is defined in a neighborhood $V$ of $E_{\bar{m}}$ in $G_k(\overline{A},\Omega)$ to be the set of all $F_{\bar n}\in V$ such that
\begin{equation*}
    f^c(F_{\bar n})\coloneqq \beta^c(e_{1}(F_{\bar n}),\dots,e_{k}(F_{\bar n}))=0,
\end{equation*}
for a choice of frame $\{e_{1},\dots,e_{k}\}$ on $V$ for all $1\leq c\leq q$. Additionally, it is required that $f^c$ has linearly independent differentials on $V$ for all $1\leq c\leq q$. We denote by $V_k^{\mathrm{o}}(\mathcal{I})\subset V_k(\mathcal{I})$ the subspace of $k$-dimensional K{\"a}hler-ordinary integral elements.
\end{definition}
\begin{definition}
We say that an integral element $E_{\bar{m}}\in V_k(\mathcal{I})$ is \emph{K{\"a}hler-regular} if $E_{\bar{m}}$ is K{\"a}hler-ordinary, and if the function $r$ is locally constant on a neighborhood of $E_{\bar{m}}$ in $V_k^{\mathrm{o}}(\mathcal{I})$. 

%We use the notation $V_k^{\mathrm{r}}(\mathcal{I})$ to denote the subspace of $k$-dimensional K{\"a}hler-regular integral elements.

\end{definition}

We are now in position to define K{\"a}hler-ordinary and K{\"a}hler-regular integral manifolds. We say that a submanifold $i\colon M\to \overline{M}$ is a \emph{K{\"a}hler-ordinary} integral manifold if $I(A_m)\subset \overline{A}_{i(m)}$ is a K{\"a}hler-ordinary integral element for all $m\in M$. Similarly, we say that $i\colon M\to \overline{M}$ is a \emph{K{\"a}hler-regular} integral manifold if $I(A_m)\subset \overline{A}_{i(m)}$ is a K{\"a}hler-regular integral element for all $m\in M$. If $i\colon M\to \overline{M}$ is connected and K{\"a}hler-regular, we set
\begin{equation*}
    r(M)\coloneqq r(I(A_m)),
\end{equation*}
which is well-defined since $r$ is locally constant.

Later, it will be useful in the formulation of the corollary of the Cartan--K{\"a}hler theorem (\Cref{cor:maintheorem}) to assume the existence of an increasing sequence of K{\"a}hler-regular integral elements. To this end, we define an integral flag.

\begin{definition}
Let $l\geq 0$ be a non-negative integer. We say that a sequence of subspaces $(E_l)_{\bar{m}}\subset (E_{l+1})_{\bar{m}}\subset \dots\subset (E_{l+n})_{\bar{m}}\subset \overline{A}_{\bar{m}}$ is an \emph{$l$-integral flag} of $\mathcal{I}$ of length $n$ based at $\bar{m}$ if each $(E_{l+j})_{\bar{m}}$ is an integral element of dimension $l+j$ based at $\bar{m}$. If $l=0$, we simply say that this sequence is an \emph{integral flag} of $\mathcal{I}$ of length $n$ based at $\bar{m}$.
\end{definition}

\begin{definition}
An integral element $E_{\bar{m}}$ is said to be \emph{$l$-ordinary} if there exists an $l$-integral flag $(E_l)_{\bar{m}}\subset (E_{l+1})_{\bar{m}}\subset \cdots \subset (E_{l+n})_{\bar{m}}\subset \overline{A}_{\bar{m}}$ based at $\bar{m}$ with $E_{\bar{m}}=(E_{l+n})_{\bar{m}}$, satisfying the condition that $(E_{l+j})_{\bar{m}}$ is K{\"a}hler-regular for $j\leq n-1$. If $l=0$, we simply say that $E_{\bar{m}}$ is \emph{ordinary}.
\end{definition}

%% file: sections/cartankahler.tex
In this section, we prove two versions of the Cartan--K{\"a}hler theorem for transitive Lie algebroids, stated in \Cref{thm:maintheorem} and \Cref{cor:maintheorem}. From now on, we will no longer make any notational distinction between the exterior derivative $\delta$ on $A$ and $\bar{\delta}$ on $\overline{A}$ (we will denote both by $\delta$).
\begin{theorem}[Cartan--K{\"a}hler, first version]\label{thm:maintheorem}
Let $\mathcal{I}\subset \Lambda^{\ast}(\overline{A})$ be an analytic differential ideal of an analytic transitive Lie algebroid $\bar{\tau}\colon \overline{A}\to \overline{M}$. Let $i\colon M\to \overline{M}$ be an analytic $p$-dimensional connected, K{\"a}hler-regular integral manifold of $\mathcal{I}$. Assume that $r(M)$ is a non-negative integer and set $r\coloneqq r(M)$. 

Let $i_{3}\colon R\to \overline{M}$ be any analytic submanifold of codimension $r$ containing $M$ (this corresponds to an immersion $i_{12}\colon M\to R$) and satisfies
\begin{equation}\label{eqn:transverse}
    I_3((\mathcal{L}^{i_3}\overline{A})_{i_{12}(x)})+H(I(A_x))=\overline{A}_{i(x)}
\end{equation}
for all $x\in M$, where $I$ is the $i$-induced map, and $I_{3}$ is the $i_{3}$-induced map.

Then there exists a $(p+1)$-dimensional connected analytic integral manifold $X$ of $\mathcal{I}$ satisfying $M\subset X\subset R$.
\end{theorem}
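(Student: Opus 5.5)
The plan is to reduce to the classical proof of the Cartan--K{\"a}hler theorem (Chap.~III of \cite{bryant1991exterior}) and run it inside $R$, using adapted analytic coordinates together with the analytic splitting $\bar{\gamma}$. Since the problem is local and $M$ is connected, I will construct $X$ near a point of $M$, use uniqueness in Cauchy--Kowalevski to see that local pieces agree on overlaps, and glue them.

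\textbf{Step 1: coordinates.} Near a point of $M$, choose analytic coordinates $(x^1,\dots,x^p,y,u^1,\dots,u^{s})$ on $R$, where $s=\dim\overline{M}-r-p-1$. In these coordinates $M=\{y=0,u=0\}$, and $R$ is cut out in $\overline{M}$ by further coordinates $w^1,\dots,w^r$.

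\textbf{Step 2: unknowns and constraints.} The Lie algebroid $\overline{A}$ enters only through the fibres over the base. An integral manifold $X$ of dimension $p+1$ has $I(A_x)=\ker\bar{\rho}+\bar{\gamma}(\mathrm{T}X)$. Consequently, the integral-element conditions on the $(L+1)$-dimensional spaces $I(\mathcal{L}^{j}\overline{A})$ involve $X$ only through the function $u(x,y)$ and its first derivatives; the kernel directions contribute analytic coefficients built from $\bar{\rho}$ and the structure functions $L^\gamma_{\alpha\beta}$.

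I intend to use \eqref{eqn:transverse} and K{\"a}hler-regularity as follows. Because the dimension $r(M)$ is locally constant and $R$ has codimension $r$, the polar equations $\varphi(v,e_1,\dots,e_L)=0$, for $\varphi\in\mathcal{I}^{L+1}$ with $e_i$ spanning $I(A_x)$, should contain exactly $s$ independent equations. By \eqref{eqn:transverse}, these can be solved for $\partial u/\partial y$. This produces an analytic system
\[
\frac{\partial u}{\partial y}=G\left(x,y,u,\frac{\partial u}{\partial x}\right),\qquad u(x,0)=0.
\]
Cauchy--Kowalevski then gives an analytic solution, and I set $X=\{u=u(x,y)\}\subset R$. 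Since $u(x,0)=0$, we have $M\subset X$.

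\textbf{Step 3 (main obstacle): $X$ is integral.} The equations used in Step 2 are only part of the conditions, so it remains to show that $I_X^\ast\theta=0$ for all $\theta\in\mathcal{I}$. By \Cref{prop:intmfdintelement} it suffices to check this for forms of degree $\le L+1$. The classical argument is as follows.
\begin{itemize}
\item Restrict to the slices $X_t=\{y=t\}$.
\item Use Cartan's formula together with $\delta\mathcal{I}\subset\mathcal{I}$. The Lie algebroid $\delta$ carries extra bracket terms in the kernel directions, but these are analytic and linear in the restricted forms.
\item Deduce that the remaining components $\Theta$ of $I_X^\ast\theta$ satisfy a linear analytic Cauchy problem $\partial_y\Theta=(\text{linear in }\Theta,\partial_x\Theta)$ with $\Theta|_{y=0}=0$, because $M$ is integral.
\item Uniqueness in Cauchy--Kowalevski then forces $\Theta\equiv0$.
\end{itemize}
\Cref{lem:hadamard} is used here to write the components that vanish on $M$ as multiples of $y$ and of the defining functions, so that they can be absorbed into the linear system.

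The hardest part is showing that K{\"a}hler-regularity exactly makes the nonused equations consequences of the used ones modulo this linear system. The delicate point is that the extra fibre directions $\ker\bar{\rho}$ do not alter the rank count. I will handle this by working in the frame $\{\bar{k}_a,\bar{\gamma}_j\}$, in which kernel directions have constant, $X$-independent pullbacks $k^a$.
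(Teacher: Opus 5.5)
\textbf{Verdict: genuine gap in Step 3.}

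\textbf{Steps 1--2.} These match the paper's construction: coordinates adapted to $M\subset R$, a fixed choice of $s$ forms $\kappa^\sigma\in\mathcal{I}^{L+1}$, solving the polar equations for a vector $\bar\gamma_y+G^\sigma\bar\gamma_{u^\sigma}$, then Cauchy--Kowalevski with $u(x,0)=0$. Two details need repair.
\begin{itemize}
\item \eqref{eqn:transverse} only says the $s$ polar functionals have rank $s$ on $\mathrm{span}\{\bar\gamma_y,\bar\gamma_{u^\sigma}\}$. To solve for $\partial u/\partial y$ you must choose $y$ with $\bar\gamma_y(\bar x_0)\in I_3((\mathcal{L}^{i_3}\overline{A})_{i_{12}(x_0)})\cap H(I(A_{x_0}))$. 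This makes the $u$-block invertible near $F_{\bar x_0}=I(A_{x_0})$.
\item The $G^\sigma$ must be defined on an open subset of $G_L(\overline{A})$, since the slices of the unknown $X$ are not yet known to be integral.
\end{itemize}
Your gluing via uniqueness is fine, because any such $X'$ is forced to be a graph solving the same PDE; the paper simply argues locally.

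\textbf{Step 3.} You flag this as ``the hardest part'' but leave it open. Working in a frame where $\bar k^a$ pulls back to $k^a$ does not address the real issue: why the equations of $\mathcal{I}$ not used in the PDE follow from the $\kappa^\sigma$. Three ingredients are missing.
\begin{enumerate}
\item[(i)] K{\"a}hler-ordinarity of $F_{\bar x_0}$ supplies $\beta^1,\dots,\beta^q\in\mathcal{I}^L$ whose induced functions cut out $V_L(\mathcal{I})$ near $F_{\bar x_0}$ with independent differentials. The unknowns of the linear Cauchy problem are the values of $\beta^c$ on the slices $y=\mathrm{const}$ of $X$. They are not components of $I_X^\ast\theta$; for $\theta\in\mathcal{I}^{L+1}$ that form has a single component.
\item[(ii)] K{\"a}hler-regularity must be used on a whole neighbourhood of $F_{\bar x_0}$ in $V_L(\mathcal{I})$, not only along $M$. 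It ensures that for every nearby integral $\widetilde E$, the polar space $H(\widetilde E)$ is cut out by the same $\kappa^\sigma$. Set $\Omega^+=\bar k^1\wedge\dots\wedge\bar k^{L-p}\wedge\bar\gamma^{x^1}\wedge\dots\wedge\bar\gamma^{x^p}\wedge\bar\gamma^y$. Combined with (i), this shows that near $F^+_{\bar x_0}$, $V_{L+1}(\mathcal{I})$ is exactly the common zero set of $B^c=(\beta^c\wedge\bar\gamma^y)_{\Omega^+}$ and $K^\sigma=(\kappa^\sigma)_{\Omega^+}$, with independent differentials (\Cref{lem:part2}). Independence uses that the $L$-planes inside the integral element $F^+_{\bar x_0}$ are integral elements on which the $\bar\gamma_y$-tilt coordinates vary freely.
\item[(iii)] Only then is \Cref{lem:hadamard} applied, on the Grassmannian $G_{L+1}(\overline{A},\Omega^+)$ rather than along $M\subset X$. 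It writes the functions induced by $\beta^c\wedge\bar k^a$, $\beta^c\wedge\bar\gamma^{x^i}$ and $\delta\beta^c$ as analytic combinations of the $B^b$ and $K^\sigma$. Since $K^\sigma=0$ on $X$ by construction, restricting $\delta\beta^c$ to $X$ gives the closed system $\partial_yB^c=\widetilde G^c_bB^b+\widetilde A^{ci}_b\,\partial_{x^i}B^b$ with $B^c(x,0)=0$. Hence $B^c\equiv0$, and $X$ is integral by \Cref{lem:part2} and \Cref{prop:intmfdintelement}.
\end{enumerate}

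Your version of Hadamard, dividing components that vanish on $M$ by $y$, cannot close the system, because the quotient is a new unknown. The closure has to be in terms of the $B^b$ themselves. Point (ii) is exactly where the regularity hypothesis is indispensable.
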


The core ideas of the proof below stem from \cite{bryant1982exterior,bryant1991exterior,ivey2016cartan}. Note that in the case that $\overline{A}=\mathrm{T}\overline{M}$, the condition \eqref{eqn:transverse} simplifies to the usual transversality condition
\begin{equation*}
    \mathrm{T}_x R+H(\mathrm{T}_x M)=\mathrm{T}_x\overline{M},\quad \mathrm{for}~x\in M.
\end{equation*}
For a better comparison with the classical case, we have also retained some of the notation from \cite{bryant1991exterior}.
\begin{proof}
We wish to find a manifold $X$ and immersions $i_1\colon M\to X$ and $i_2\colon X\to R$ that suitably fit in the diagram below:
\begin{equation*}
    \begin{CD}
    A@>I_1>>\mathcal{L}^{i_{23}}\overline{A}@>I_2>>\mathcal{L}^{i_3}\overline{A}@>I_3>>\overline{A}\\
    @V\tau VV @VVV @VVV@VV{\bar{\tau}}V \\
    M@>>i_1>X@>>i_2>R@>>i_3>\overline{M}\\
    \end{CD}
\end{equation*}
Since the theorem is a local result, it suffices to prove existence in a neighborhood of a point $x_0\in M$. We will then proceed as follows: After constructing a candidate for the manifold $X$, we will show in \Cref{lem:part2,lem:part3} that this $X$ is indeed an integral manifold. In particular, we will show in \Cref{lem:part2} that it suffices to show that a certain finite set of forms vanish on a prolongation algebroid over $X$. Finally, in \Cref{lem:part3} we will show that these forms do indeed vanish.

First, consider the vector space $I_3((\mathcal{L}^{i_3}\overline{A})_{i_{12}(x)})\cap H(I(A_x))$ for each $x\in M$. It has constant dimension, given by
\begin{align*}
    \dim(I_3((\mathcal{L}^{i_3}\overline{A})_{i_{12}(x)})\cap H(I(A_x)))&=\dim(I_3((\mathcal{L}^{i_3}\overline{A})_{i_{12}(x)}))+\dim(H(I(A_{x})))-\dim(\overline{A}_{i(x)})\\&=(N-r)+(r+L+1)-N\\&=L+1,
\end{align*}
where we have used that $I$ and $I_3$ are injective by \Cref{prop:I-injective}. 

Let us now consider a fixed $x_0 \in M$ and denote $\bar{x}_0\coloneqq i(x_0)\in \overline{M}$. We also consider a splitting $\bar{\gamma}$ of $\bar\rho$. From the previous observation, we may choose local coordinates on $\overline{M}$ centered at $x_0$ of the form $(x^1,\dots,x^p,y,u^1,\dots,u^s,v^1,\dots,v^r)$, and a basis of $\overline{A}$ given by 
\begin{equation*}
    \left\{\bar{k}_a,\bar{\gamma}_{x^i}\coloneqq \bar{\gamma}\left(\frac{\partial}{\partial x^i}\right),\bar{\gamma}_{y}\coloneqq \bar{\gamma}\left(\frac{\partial}{\partial y}\right),\bar{\gamma}_{u^{\sigma}}\coloneqq \bar{\gamma}\left(\frac{\partial}{\partial u^{\sigma}}\right),\bar{\gamma}_{v^{\rho}}\coloneqq \bar{\gamma}\left(\frac{\partial}{\partial v^{\rho}}\right)\right\},
\end{equation*}
where $M$ is described by $y=u=v=0$, $R$ by $v=0$, and such that the polar space $H(I(A_x))$ is spanned by the vectors $\bar{k}_a,\bar{\gamma}_{x^i},\bar{\gamma}_y,\bar{\gamma}_{v^{\rho}}$. Indeed, since $I(A_x)\subset H(I(A_x))$, it follows that $\bar{k}_a,\bar{\gamma}_{x^i}\in H(I(A_x))$. Moreover, by \eqref{eqn:transverse}, it follows that $\bar{\gamma}_{v^{\rho}}\in H(I(A_x))$, since $\bar{\gamma}_{v^{\rho}}\not\in I_3((\mathcal{L}^{i_3}\overline{A})_{i_{12}(x)})$. Finally, since $\dim(H(I(A_x)))=r+L+1$, one chooses the $y$ and $u^{\sigma}$ coordinates so that $H(I(A_x))=\spn\{\bar{k}_a,\bar{\gamma}_{x^i},\bar{\gamma}_y,\bar{\gamma}_{v^{\rho}}\}$.

Recall that $p=\dim(M)$ and that $r=\codim(R)$, so that $s+1$ denotes $\dim(\overline{M})-p-r$. The distinct roles that the coordinates $y$ and $u^{\sigma}$ play in the construction of $X$ will become clear later. We will denote the dual basis by $\{\bar{k}^a,\bar{\gamma}^{x^i},\bar{\gamma}^y,\bar{\gamma}^{u^{\sigma}},\bar{\gamma}^{v^{\rho}}\}$.

The above basis is such that, at $x_0$, $\{\bar{k}_a ({\bar x}_0),\bar{\gamma}_{x^i}({\bar x}_0) \}$ forms a basis of the integral element $F_{\bar{x}_0}\coloneqq I(A_{x_0})\in V_L(\mathcal{I})$. We may now consider a neighborhood $U$ of $F_{\bar{x}_0}$ in $G_L(\overline{A})$ such that every $E_z\in U$ with basepoint $z\in \overline{M}$ has a basis of the form
\begin{equation}\label{eqn:Kabasis}
    \mathcal{K}_a(E_z)=\bar{k}_a+q_a(E_z)\bar{\gamma}_y+p_a^{\sigma}(E_z)\bar{\gamma}_{u^{\sigma}}+w_a^{\sigma}(E_z)\bar{\gamma}_{v^{\rho}},
\end{equation}
\begin{equation}\label{eqn:Xibasis}
    \mathcal{X}_i(E_z)=\bar{\gamma}_{x^i}+q_{i}(E_z)\bar{\gamma}_y+p_{i}^{\sigma}(E_z)\bar{\gamma}_{u^{\sigma}}+w_{i}^{\sigma}(E_z)\bar{\gamma}_{v^{\rho}},
\end{equation}
where, here, $\bar{k}_a$ is short for $\bar{k}_a(z)$, etc. Since the above determines $E_z$ within $U$, we can use this to give coordinates $(x,y,u,v,q,p,w)$ on $U$, centered at $F_{\bar{x}_0}$. 

Now, let $\kappa\in \mathcal{I}^{L+1}$ and $v\in \overline{A}_z$. With this, we may define a function on $U$, given by
\begin{equation*}
    \psi_{\kappa,v}(E_z)\coloneqq \kappa(v,\mathcal{K}_1(E_z),\dots,\mathcal{K}_{L-p}(E_z),\mathcal{X}_1(E_z),\dots,\mathcal{X}_p(E_z)).
\end{equation*}
In particular,
\begin{equation*}
    \psi_{\kappa,v}(F_{\bar{x}_0})=\kappa(v,\bar{k}_1,\dots,\bar{k}_{L-p},\bar{\gamma}_{x^1},\dots,\bar{\gamma}_{x^p}),
\end{equation*}
where, again, it is understood that $\bar{k}_1$ is short for  $\bar{k}_1(x_0)$, etc. The polar space of $F_{\bar{x}_0}$ is then given by
\begin{equation*}
    H(F_{\bar{x}_0})=\{v\in \overline{A}_{\bar{x}_0}\mid \psi_{\kappa,v}(F_{\bar{x}_0})=0~\mathrm{for}~\mathrm{all}~\kappa\in \mathcal{I}^{L+1}\}.
\end{equation*}
Since the codimension of $H(F_{\bar{x}_0})$ is $s$, there exist $s$ analytic $(L+1)$-forms $\kappa^1,\dots,\kappa^s\in \mathcal{I}^{L+1}$ such that
\begin{equation*}
    H(F_{\bar{x}_0})=\{v\in \overline{A}_{\bar{x}_0}\mid \psi_{\kappa^{\sigma},v}(F_{\bar{x}_0})=0~\mathrm{for}~1\leq \sigma\leq s\}.
\end{equation*}
By a suitable change of basis, we may even assume that
\begin{equation}\label{eqn:kappasigma}
    \psi_{\kappa^{\sigma},v}(F_{\bar{x}_0})=\bar{\gamma}^{u^{\sigma}}(v),
\end{equation}
for $1\leq \sigma\leq s$ and $v\in \overline{A}_{\bar{x}_0}$. Now that we have found an expression for the polar space at $F_{\bar{x}_0}$, it is natural to search for a similar expression at any $E_z\in V_L(\mathcal{I})\cap U$ (possibly shrinking $U$). By definition,
\begin{equation*}
    H(E_z)=\{v\in \overline{A}_z\mid \psi_{\kappa,v}(E_z)=0~\mathrm{for}~\mathrm{all}~\kappa\in \mathcal{I}^{L+1}\}.
\end{equation*}
We claim that we may write
\begin{equation}\label{eqn:polargeneral}
    H(E_z)=\{v\in \overline{A}_z\mid \psi_{\kappa^{\sigma},v}(E_z)=0~\mathrm{for}~1\leq \sigma\leq s\}.
\end{equation}
For a fixed $E_z$, first note that the equations $\psi_{\kappa^{\sigma},v}(E_z)=0$ depend linearly on $v=\alpha_a\bar{k}_a+\beta_i \bar{\gamma}_{x^i}+\delta \bar{\gamma}_y+\epsilon_{\sigma}\bar{\gamma}_{u^{\sigma}}+\zeta_{\rho}\bar{\gamma}_{v^{\rho}}$. Moreover, at $F_{\bar{x}_0}$, the equations are linearly independent, and they are given explicitly by $\epsilon_{\sigma}=0$ for $1\leq \sigma\leq s$.

We now claim that the equations $\psi_{\kappa^{\sigma},v}(E_z)=0$ remain linearly independent for any $E_z\in V_L(\mathcal{I})\cap U$. Note that the matrix representation of the equations at $F_{\bar{x}_0}$ is given by
\begin{equation*}
    \begin{bmatrix} 0 & 0 & \dots & 1 & 0 & \dots & 0 & 0 & \dots & 0 \\ 0 & 0 & \dots & 0 & 1 & \dots & 0 & 0 & \dots & 0 \\ \vdots & \vdots & \vdots & \vdots & \vdots & \vdots & \vdots & \vdots & \vdots & \vdots \\ 0 & 0 & \dots & 0 & 0 & \dots & 1 & 0 & \dots & 0 \end{bmatrix}.
\end{equation*}
This matrix contains an $s\times s$ minor matrix with determinant one. We may therefore choose the neighborhood $U$ so that the determinant of this specific $s\times s$ matrix remains non-zero, because the set of linear functionals $\ell_{E_z}^{\sigma}\colon \overline{A}_z\to \mathbb{R}$, $v\mapsto \psi_{\kappa^{\sigma},v}(E_z)$ depends continuously (in particular, analytically) on $E_z$. This implies linear independence of the equations for any $E_z\in V_L(\mathcal{I})\cap U$. By K{\"a}hler-regularity, we know that $\codim(H(E_z))=s$ on $V_L(\mathcal{I})\cap U$ (possibly shrinking $U$), so, indeed, \eqref{eqn:polargeneral} holds.

Next, we will construct a specific vector $\mathcal{Y}(E_z) \in H(E_z)$. To do so, we start with a general expression of the form
\begin{equation*}
    \mathcal{Y} (E_z)=a\bar{\gamma}_y+b^{\sigma}\bar{\gamma}_{u^{\sigma}}+c^{\rho}\bar{\gamma}_{v^{\rho}}.
\end{equation*}
Notice that the $s$ equations $\psi_{\kappa^{\sigma},\mathcal{Y}(E_z)}(E_z)=0$ are linear equations for $a,b,c$. Explicitly, they are given by
\begin{equation*}
    A^{\sigma}(E_z)a+B_{\tau}^{\sigma}(E_z)b^{\tau}+C_{\rho}^{\sigma}(E_z)c^{\rho}=0,
\end{equation*}
where
\begin{equation*}
    A^{\sigma}(E_z)\coloneqq \psi_{\kappa^{\sigma},\bar{\gamma}_y}(E_z),\quad B^{\sigma}_{\tau}(E_z)\coloneqq \psi_{\kappa^{\sigma},\bar{\gamma}_{u^{\tau}}}(E_z),\quad C^{\sigma}_{\rho}(E_z)\coloneqq \psi_{\kappa^{\sigma},\bar{\gamma}_{v^{\rho}}}(E_z).
\end{equation*}
By assumption, at $E_z=F_{\bar{x}_0}$, these equations are linearly independent and they reduce to $b^{\sigma}=0$ (by \eqref{eqn:kappasigma}). Therefore, shrinking $U$ if necessary, we may assume that the $s\times s$ matrix $B(E_z)=(B_{\tau}^{\sigma}(E_z))$ is invertible for all $E_z\in U$. Making the choice $a=1$ and $c^{\rho}=0$, we may find unique analytic functions $G^{\sigma}$ so that for each $E_z\in U$, the vector
\begin{equation*}
    \mathcal{Y}(E_z)=\bar{\gamma}_y+G^{\sigma}(E_z)\bar{\gamma}_{u^{\sigma}},
\end{equation*}
solves
\begin{equation}\label{eqn:kappasigma2}
    \kappa^{\sigma}(\mathcal{Y}(E_z),\mathcal{K}_1(E_z),\dots,\mathcal{K}_{L-p}(E_z),\mathcal{X}_1(E_z),\dots,\mathcal{X}_p(E_z))=0.
\end{equation}
Here, $G^{\sigma}$ are related to the matrices $A$ and $B$ via
\begin{equation*}
    G^{\sigma}=-(B^{-1})_{\tau}^{\sigma} A^{\tau}.
\end{equation*}
Since $E_z$ can be specified using the coordinates $(x,y,u,v,q,p,w)$, in what follows, we will regard $G^{\sigma}$ as functions of these variables.

We are now in position to construct a candidate for $X$, as follows: We seek $i_{23}\colon X\to \overline{M}$ of the form $(x,y)\mapsto z(x,y)\coloneqq (x,y,u=F(x,y),v=0)$ on which the forms $\kappa^{\sigma}$ vanish. If we are able to do so, the manifold $X$ we construct in this way will be, at the very least, an integral manifold of $\langle \kappa^1,\dots,\kappa^s\rangle\subset \mathcal{I}$ (and the remaining task is to show that it is also an integral manifold for all forms in $\mathcal{I}$). 

First, note that such an $X$ would have an $i_{23}$-prolongation of $\overline{A}$ given by
\begin{equation*}
    \mathcal{L}^{i_{23}}\overline{A}=\{(\bar{a}_{\bar{m}},v_x)\in \overline{A}\times \mathrm{T}X\mid \bar{\rho}(\bar{a}_{\bar{m}})=\mathrm{T}i_{23}(v_x)\},
\end{equation*}
and that $\mathcal{L}^{i_{23}}\overline{A}\simeq \ker(\hat{\rho})\oplus \hat{\gamma}(\mathrm{T}X)$, where $\hat{\gamma}\colon \mathrm{T}X\to \mathcal{L}^{i_{23}}\overline{A}$ is the $(i_{23},\bar{\gamma})$-induced splitting, i.e.,
\begin{equation*}
    \hat{\gamma}(v_x)\coloneqq (\bar{\gamma}(\mathrm{T}i_{23}(v_x)),v_x).
\end{equation*}
Let
\begin{equation*}
    X_i(x,y)\coloneqq \mathrm{T}i_{23}\left(\frac{\partial}{\partial x^i}\Big|_{(x,y)}\right)=\left(\frac{\partial}{\partial x^i}+\frac{\partial F^{\sigma}(x,y)}{\partial x^i}\frac{\partial}{\partial u^{\sigma}}\right)\Big|_{z(x,y)},
\end{equation*}
and
\begin{equation*}
    Y(x,y)\coloneqq \mathrm{T}i_{23}\left(\frac{\partial}{\partial y}\Big|_{(x,y)}\right)=\left(\frac{\partial}{\partial y}+\frac{\partial F^{\sigma}(x,y)}{\partial y}\frac{\partial}{\partial u^{\sigma}}\right)\Big|_{z(x,y)}.
\end{equation*}
Therefore, a basis of $(\mathcal{L}^{i_{23}}\overline{A})_{(x,y)}$ takes the form
\begin{equation*}
    \left\{\hat{k}_a(x,y)=(\bar{k}_a(z(x,y)),0), \hat{\gamma}(X_i(x,y)), \hat{\gamma}(Y(x,y))\right\}.
\end{equation*}
We want the construction of $X$ to be such that $I_{23}(\hat{k}_a)=\mathcal{K}_a(E_z)$, $I_{23}(\hat{\gamma}(X_i(x,y)))=\mathcal{X}_i(E_z)$ and $I_{23}(\hat{\gamma}(Y(x,y)))=\mathcal{Y}(E_z)$,  because they form a basis of $I_{23}((\mathcal{L}^{i_{23}}\overline{A})_{(x,y)})$, which would imply by \eqref{eqn:kappasigma2} and \Cref{prop:intmfdintelement} that $X$ is integral for $\langle \kappa^1,\dots,\kappa^s\rangle \subset \mathcal{I}$. 

Since $I_{23}(\hat{k}_a)=\bar{k}_a$, $I_{23}(\hat{\gamma}(X_i(x,y)))=\bar{\gamma}_{x^i}+\frac{\partial F^{\sigma}}{\partial x^i}\bar{\gamma}_{u^{\sigma}}$, $I_{23}(\hat{\gamma}(Y(x,y)))=\bar{\gamma}_y+\frac{\partial F^{\sigma}}{\partial y}\bar{\gamma}_{u^{\sigma}}$, this can only happen when  $F$ satisfies the system of PDEs
\begin{equation}\label{eqn:pde-ck}
    \frac{\partial F^{\sigma}}{\partial y}=G^{\sigma}\left(x,y,F,0,q_a=0,q_i=0,p_a=0,p_i=\frac{\partial F}{\partial x^i},w_a=0,w_i=0\right),
\end{equation}
together with the initial condition $F(x,0)=0$ (so that $X$ contains $M$). By the Cauchy--Kowalevski theorem (cf. \Cref{sec:preliminaries}), there exists a unique analytic solution $F$ satisfying this system. This defines a connected submanifold $X\subset R$ and therefore concludes its construction. For our convenience, in the following, we replace $u$ by the functions $u-F(x,y)$, so that we can assume that $X$ is described by the equations $u=v=0$.

Recall that, so far, we have only shown that $X$ is a connected analytic submanifold of dimension $p+1$ which satisfies $M\subset X\subset R$ and is an integral manifold of $\langle \kappa^1,\dots,\kappa^s\rangle\subset \mathcal{I}$ (but not necessarily all forms in $\mathcal{I}$). We must now show that $X$ is an integral manifold.

To this end, since $F_{\bar{x}_0}$ is K{\"a}hler-ordinary, we can let $\beta^1,\dots,\beta^q$ be a set of analytic $L$-forms in $\mathcal{I}$ so that the functions
\begin{equation*}
    f^c(E_z)=\beta^c(K_1(E_z),\dots,K_{L-p}(E_z),X_1(E_z),\dots,X_p(E_z)),
\end{equation*}
for $1\leq c\leq q$ have linearly independent differentials on $U$ and are such that
\begin{equation*}
    V_L(\mathcal{I})\cap U=\{E_z\in U\mid f^1(E_z)=f^2(E_z)=\cdots=f^q(E_z)=0\},
\end{equation*}
where we shrink $U$ more if necessary.

Let $F_{\bar{x}_0}^+\coloneqq I_{23}((\mathcal{L}^{i_{23}}\overline{A})_{i_1(x_0)})$. By construction, $\mathcal{Y}(F_{\bar{x}_0})\in H(F_{\bar{x}_0})$, and therefore
\begin{equation*}
    F_{\bar{x}_0}^+=F_{\bar{x}_0}+\mathbb{R}\cdot \mathcal{Y}(F_{\bar{x}_0})\subset H(F_{\bar{x}_0}).
\end{equation*}
By \Cref{prop:polar}, it follows that $F_{\bar{x}_0}^+\in V_{L+1}(\mathcal{I})$. In particular, $F_{\bar{x}_0}^+$ extends the K{\"a}hler-regular integral element $F_{\bar{x}_0}$.

We now claim (in the lemma below) that in some neighborhood $U^+$ of $F_{\bar{x}_0}^+$ in $G_{L+1}(\overline{A})$, the set of ordinary common zeros of the $(L+1)$-forms $\{\beta^c\wedge \bar{\gamma}^y\}_{1\leq c\leq q}\cup \{\kappa^{\sigma}\}_{1\leq \sigma\leq s}$ is $V_{L+1}(\mathcal{I})\cap U^+$. This claim is useful, because if we can show (later, see the text before \Cref{lem:part3}) that $\beta^c\wedge \bar{\gamma}^y|_{F_{i_{23}(x,y)}^+}=\kappa^{\sigma}|_{F_{i_{23}(x,y)}^+}=0$ for some integral element $F_{i_{23}(x,y)}^+$  (that we will define later for any generic $(x,y)\in X$), then \Cref{prop:intmfdintelement} implies that $X$ is, in fact, an integral manifold.

Let $\Omega=\bar{k}^1\wedge \dots\wedge \bar{k}^{L-p}\wedge \bar{\gamma}^{x^1}\wedge \dots\wedge \bar{\gamma}^{x^p}$ and set
\begin{equation*}
    \Omega^+=\Omega\wedge \bar{\gamma}^y,\qquad B^c=(\beta^c\wedge \bar{\gamma}^y)_{\Omega^+} \qquad\mbox{and}\qquad K^{\sigma}=(\kappa^{\sigma})_{\Omega^+}
\end{equation*}
(we define $B^c$ and $K^{\sigma}$ using \eqref{eqn:formfunction}).

\begin{lemma}\label{lem:part2}
There exists an open neighborhood $U^+\subset G_{L+1}(\overline{A},\Omega^+)$ of $F_{\bar{x}_0}^+$ such that $\{B^c,K^{\sigma}\}$ have linearly independent differentials on $U^+$ and that
\begin{equation}\label{eqn:commonzeros}
    V_{L+1}(\mathcal{I})\cap U^+=\{E_z^+\in U^+\mid B^c(E_z^+)=K^{\sigma}(E_z^+)=0~\mathrm{for}~\mathrm{all}~c~\mathrm{and}~\sigma\}.
\end{equation}
\end{lemma}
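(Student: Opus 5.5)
We work in the affine chart of $G_{L+1}(\overline{A},\Omega^+)$ around $F_{\bar{x}_0}^+$, mirroring the chart used for $U$. Every $E^+_z$ near $F_{\bar{x}_0}^+$ has a unique basis of the form $\mathcal{K}_a^+,\mathcal{X}_i^+,\mathcal{Y}^+$, where
\begin{equation*}
\mathcal{K}_a^+=\bar{k}_a+p_a^{\sigma}\bar{\gamma}_{u^{\sigma}}+w_a^{\rho}\bar{\gamma}_{v^{\rho}},\qquad \mathcal{X}_i^+=\bar{\gamma}_{x^i}+p_i^{\sigma}\bar{\gamma}_{u^{\sigma}}+w_i^{\rho}\bar{\gamma}_{v^{\rho}},\qquad \mathcal{Y}^+=\bar{\gamma}_y+g^{\sigma}\bar{\gamma}_{u^{\sigma}}+h^{\rho}\bar{\gamma}_{v^{\rho}}.
\end{equation*}
The coordinates on this chart are therefore $(x,y,u,v,p,w,g,h)$, with no $\bar{\gamma}_y$-components in the first two families. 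This basis is normalized so that $\Omega^+=1$ on it, so $B^c$ and $K^{\sigma}$ are obtained simply by evaluating $\beta^c\wedge\bar{\gamma}^y$ and $\kappa^{\sigma}$ on it. The subspace $E_z:=\spn\{\mathcal{K}_a^+,\mathcal{X}_i^+\}$ lies in $U$ with $q=0$, and there $\bar{\gamma}^y$ vanishes. Expanding the wedge product then gives
\begin{equation*}
B^c(E_z^+)=\pm f^c(E_z)\,\bar{\gamma}^y(\mathcal{Y}^+)=\pm f^c(E_z),
\end{equation*}
and $K^{\sigma}(E_z^+)=\psi_{\kappa^{\sigma},\mathcal{Y}^+}(E_z)$.

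\textbf{Independence of differentials.} The functions $B^c$ depend only on the variables of $E_z$, namely $(x,y,u,v,p,w)$ restricted to $q=0$. The $K^{\sigma}$ depend on $(g,h)$ through the linear expression $A^{\sigma}+B^{\sigma}_{\tau}g^{\tau}+C^{\sigma}_{\rho}h^{\rho}$. Since $B=(B^{\sigma}_{\tau})$ is invertible near $F_{\bar{x}_0}$, the differentials $\mathrm{d}K^{\sigma}$ are independent modulo the span of the $\mathrm{d}(\text{base variables})$. It remains to show that the $\mathrm{d}f^c$ stay independent after restricting to the slice $q=0$. I would handle this by using the freedom to choose the chart: the slice $q_a=q_i=0$ is transverse to $V_L(\mathcal{I})$ at $F_{\bar{x}_0}$, because tilting in the $\bar{\gamma}_y$ direction corresponds to the extension direction $\mathcal{Y}$. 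If this direct argument is awkward, I would instead reparametrize $E_z^+$ by $E_z\in U$ with $q$ arbitrary, together with the $\mathcal{Y}$-component, noting that $\Omega^+$-normalization forces the splitting. I expect this step, showing that $\{f^c\}$ keep independent differentials when restricted to $E_z\subset E_z^+$, to be the main technical point. Once it holds, $\{\mathrm{d}B^c,\mathrm{d}K^{\sigma}\}$ form a triangular system and are independent on a neighborhood $U^+$.

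\textbf{The common zero set.} The inclusion ``$\subset$'' is immediate, since $\beta^c\wedge\bar{\gamma}^y$ and $\kappa^{\sigma}$ lie in $\mathcal{I}^{L+1}$. For ``$\supset$'', suppose $B^c(E^+_z)=K^{\sigma}(E^+_z)=0$. Then $f^c(E_z)=0$ for all $c$, so by Kähler-ordinarity (on $U$, shrunk if necessary) $E_z\in V_L(\mathcal{I})$. Since $K^{\sigma}=0$, \eqref{eqn:polargeneral} gives $\mathcal{Y}^+\in H(E_z)$. Then \Cref{prop:polar} yields $E_z^+=E_z+\mathbb{R}\mathcal{Y}^+\in V_{L+1}(\mathcal{I})$. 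Here Kähler-regularity is what allows \eqref{eqn:polargeneral} to be applied at $E_z$, which completes \eqref{eqn:commonzeros} after shrinking $U^+$ so that $E_z$ stays inside $U$.
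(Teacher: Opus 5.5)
Your setup is the same as the paper's. You use the same chart $(x,y,u,v,p,w,g,h)$ on $G_{L+1}(\overline{A},\Omega^+)$ and the same identity $B^c(E_z^+)=f^c(E_z)$, where $E_z$ is the $\bar{\gamma}^y$-annihilated $L$-plane. The block-triangular structure also matches, coming from $K^{\sigma}=(-1)^L\bigl(A^{\sigma}+B^{\sigma}_{\tau}g^{\tau}+C^{\sigma}_{\rho}h^{\rho}\bigr)$; your formula drops the harmless sign from moving $\mathcal{Y}^+$ to the front. The zero-set argument via \eqref{eqn:polargeneral} and \Cref{prop:polar} is likewise the paper's.

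The gap is the step you flag yourself. You correctly reduce everything to showing that the slice $K=\{q_a=q_i=0\}\subset U$ is transverse to $V_L(\mathcal{I})\cap U$ at $F_{\bar{x}_0}$, so that the $\mathrm{d}f^c$ stay independent after restriction to $\mathrm{T}K$. But you only give a slogan for this ("tilting in the $\bar{\gamma}_y$ direction corresponds to $\mathcal{Y}$"). Your fallback of reparametrizing $E_z^+$ by planes $E_z$ with arbitrary $q$ does not help. $B^c$ is by definition $f^c$ evaluated on the unique $\bar{\gamma}^y$-annihilated $L$-plane, which always has $q=0$, so the restriction to $K$ cannot be avoided. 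A priori, restricting independent differentials to a submanifold can make them dependent.

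The missing idea is to use that $F_{\bar{x}_0}^+$ is itself an integral element.
\begin{itemize}
\item By the choice of the $y$-coordinate (equivalently, $G^{\sigma}(F_{\bar{x}_0})=0$ by \eqref{eqn:kappasigma}), you have $\mathcal{Y}(F_{\bar{x}_0})=\bar{\gamma}_y\in H(F_{\bar{x}_0})$. Hence $F_{\bar{x}_0}^+=F_{\bar{x}_0}+\mathbb{R}\bar{\gamma}_y\in V_{L+1}(\mathcal{I})$ by \Cref{prop:polar}.
\item By \Cref{prop:subspaceintelement}, every $L$-plane $\spn\{\bar{k}_a+\lambda_a\bar{\gamma}_y,\ \bar{\gamma}_{x^i}+\mu_i\bar{\gamma}_y\}\subset F_{\bar{x}_0}^+$ with small $\lambda,\mu$ lies in $V_L(\mathcal{I})\cap U$.
\item In the chart \eqref{eqn:Kabasis}--\eqref{eqn:Xibasis}, such a plane has $q_a=\lambda_a$ and $q_i=\mu_i$, with all other coordinates equal to those of $F_{\bar{x}_0}$.
\item The $f^c$ vanish along this $L$-parameter family, so $\partial f^c/\partial q_a=\partial f^c/\partial q_i=0$ at $F_{\bar{x}_0}$. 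Equivalently, $\mathrm{T}K+\ker(\mathrm{d}f)_{F_{\bar{x}_0}}=\mathrm{T}_{F_{\bar{x}_0}}U$.
\item Discarding the $\mathrm{d}q$-components therefore loses nothing. The restrictions $\mathrm{d}f^c|_{\mathrm{T}K}$ are independent, hence so are the $\mathrm{d}B^c$ at $F_{\bar{x}_0}^+$, and openness gives the neighborhood $U^+$.
\end{itemize}
This is exactly the family $\mathcal{E}$ in the paper's proof. With it inserted, your argument is complete.
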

\begin{proof}
We will first show the equality of the sets in \eqref{eqn:commonzeros}. It is clear that $V_{L+1}(\mathcal{I})\cap U^+$ is contained in the set on the right-hand side. We hence need to show the reverse inclusion.

First, we claim that every $E_z^+\in G_{L+1}(\overline{A},\Omega^+)$ contains a unique $L$-dimensional subspace $\widetilde{E}\subset E_z^+$ on which $\bar{\gamma}^y$ vanishes. Indeed, if $\widetilde{E}'$ would be a second such $L$-dimensional subspace, then $\widetilde{E}+\widetilde{E}'=E_z^+$ and $\bar{\gamma}^y|_{E_z^+}=\bar{\gamma}^y|_{\widetilde{E}+\widetilde{E}'}=0$. But this would contradict the assumption that $\Omega^+|_{E_z^+}\neq 0$.

Choose any neighborhood $U^+\subset G_{L+1}(\overline{A},\Omega^+)$ of $F_{\bar{x}_0}^+$ such that $\widetilde{E}\in U$ whenever $E_z^+\in U^+$. One then has unique functions $A_a^{\sigma},B_a^{\rho},A_{i}^{\sigma},B_i^{\rho},a^{\sigma},b^{\rho}$ in $G_{L+1}(\overline{A},\Omega^+)$ such that the $L+1$ vectors
\begin{equation*}
    \kappa_a(E_z^+)=\bar{k}_a+A_{a}^{\sigma}(E_z^+)\bar{\gamma}_{u^{\sigma}}+B_{a}^{\rho}(E_z^+)\bar{\gamma}_{v^{\rho}},
\end{equation*}
\begin{equation*}
    \xi_i(E_z^+)=\bar{\gamma}_{x^i}+A_i^{\sigma}(E_z^+)\bar{\gamma}_{u^{\sigma}}+B_i^{\rho}(E_z^+)\bar{\gamma}_{v^{\rho}},
\end{equation*}
\begin{equation*}
    \eta(E_z^+)=\bar{\gamma}_y+a^{\sigma}(E_z^+)\bar{\gamma}_{u^{\sigma}}+b^{\rho}(E_z^+)\bar{\gamma}_{v^{\rho}},
\end{equation*}
form a basis of $E_z^+$. In particular, $\{\kappa_a(E_z^+),\xi_i(E_z^+)\}$ is a basis of $\widetilde{E}$. We thus obtain a coordinate system $(x,y,u,v,A,B,a,b)$ for $G_{L+1}(\overline{A},\Omega^+)$. Since $\bar{\gamma}^y(\kappa_a(E_z^+))=\bar{\gamma}^y(\xi_i(E_z^+))=0$ and $\bar{\gamma}^y(\eta(E_z^+))=1$, it follows that
\begin{equation}\label{eqn:Bc}
    B^c(E_z^+)=f^c(\widetilde{E}),
\end{equation}
for all $E_z^+\in G_{L+1}(\overline{A},\Omega^+)$ and $1\leq c\leq q$. Moreover, note that an explicit expression for $K^{\sigma}$ is given by 
\begin{align}\label{eqn:Ksigma}
    K^{\sigma}(E_z^+)&=\frac{\kappa^{\sigma}(\kappa_1(E_z^+),\dots,\kappa_{L-p}(E_z^+),\xi_1(E_z^+),\dots,\xi_p(E_z^+),\eta(E_z^+))}{\Omega^+(\kappa_1(E_z^+),\dots,\kappa_{L-p}(E_z^+),\xi_1(E_z^+),\dots,\xi_p(E_z^+),\eta(E_z^+))} \nonumber\\&=\kappa^{\sigma}(\kappa_1(E_z^+),\dots,\kappa_{L-p}(E_z^+),\xi_1(E_z^+),\dots,\xi_p(E_z^+),\eta(E_z^+)).
\end{align}
Now, suppose that $E_z^+\in \{E_z^+\in U^+\mid B^c(E_z^+)=K^{\sigma}(E_z^+)=0~\mathrm{for}~\mathrm{all}~c~\mathrm{and}~\sigma\}$. Then since $B^c(E_z^+)=0$, it follows from \eqref{eqn:Bc} that $\widetilde{E}\in V_L(\mathcal{I})\cap U$. Therefore, since $K^{\sigma}(E_z^+)=0$ as well, it follows from \eqref{eqn:Ksigma} that $\eta(E_z^+) \in  H(\widetilde{E})$. Therefore, $E_z^+\subset H(\widetilde{E})$, so it follows from \Cref{prop:polar} that $E_z^+\in V_{L+1}(\mathcal{I})\cap U^+$.

It remains to show that $\{B^c,K^{\sigma}\}$ have linearly independent differentials at $F_{\bar{x}_0}^+$ (having linearly independent differentials is an open property, so linear independence at $F_{\bar{x}_0}^+$ implies linear independence on a possibly smaller open neighborhood $U^+$). By \eqref{eqn:Ksigma}, we see that the functions $K^{\sigma}$ are affine in the coordinates $a,b$. Hence $K^{\sigma}=M_{\nu}^{\sigma}a^{\nu}+N^{\sigma}$ for some functions $M,N$ that only depend on $(x,y,u,v,A,B,b)$. We claim that at $F_{\bar{x}_0}^+$, we have $K^{\sigma}(F_{\bar{x}_0}^+)=0$. First, note that
\begin{equation}\label{eqn:kappasigma3}
    K^{\sigma}(F_{\bar{x}_0}^+)=\kappa^{\sigma}(\bar{k}_1,\dots,\bar{k}_{L-p},\bar{\gamma}_{x^1},\dots,\bar{\gamma}_{x^p},\eta(F_{\bar{x}_0}^+))=(-1)^L \bar{\gamma}^{u^{\sigma}}(\eta(F_{\bar{x}_0}^+))=(-1)^L a^{\sigma}(F_{\bar{x}_0}^+).
\end{equation}
Besides, since 
\begin{equation*}
    0=\kappa^{\sigma}(\mathcal{Y}(F_{\bar{x}_0}),\mathcal{K}_1(F_{\bar{x}_0}),\dots,\mathcal{K}_{L-p}(F_{\bar{x}_0}),\mathcal{X}_1(F_{\bar{x}_0}),\dots,\mathcal{X}_p(F_{\bar{x}_0}))=\bar{\gamma}^{u^{\sigma}}(\mathcal{Y}(F_{\bar{x}_0}))=G^{\sigma}(F_{\bar{x}_0}),
\end{equation*}
we get $G^{\sigma}(F_{\bar{x}_0})=0$. Therefore, $\mathcal{Y}(F_{\bar{x}_0})=\bar{\gamma}_y$, so it follows that $F_{\bar{x}_0}^+=\spn\{\bar{k}_a,\bar{\gamma}_{x^i},\bar{\gamma}_y\}$. From this we may conclude that $a^{\sigma}(F_{\bar{x}_0}^+)=0$ and therefore also $K^{\sigma}(F_{\bar{x}_0}^+)=0$. 

We may take $M_{\nu}^{\sigma}(F_{\bar{x}_0}^+)=(-1)^L \delta_{\nu}^{\sigma}$, and $N^{\sigma}(F_{\bar{x}_0}^+)=0$. Hence, we may assume (shrinking $U^+$ if necessary) that $(M_{\nu}^{\sigma}(E_z^+))$ is invertible for all $E_z^+\in U^+$. For this reason, there exist unique functions $T^{\nu}$ which depend only on $x,y,u,v,A,B,b$ such that $N^{\sigma}=-M_{\nu}^{\sigma}T^{\nu}$, and so we may write $K^{\sigma}=M_{\nu}^{\sigma}(a^{\nu}-T^{\nu})$. Note that the functions $a^{\sigma}-T^{\sigma}$ have independent differentials, and that the $B^c$ are independent from the $a^{\sigma}-T^{\sigma}$ (since $B^c$ only depends on $x,y,u,v,A,B$). Hence, $\{B^c,K^{\sigma}\}$ have linearly independent differentials at $F_{\bar{x}_0}^+$ if and only if $\{B^c\}$ have linearly independent differentials at $F_{\bar{x}_0}^+$.

Now, let $K\subset U$ be the set of $L$-dimensional subspaces on which $\bar{\gamma}^y$ vanishes. Using the coordinates $(x,y,u,v,q,p,w)$ defined by \eqref{eqn:Kabasis} and \eqref{eqn:Xibasis}, one sees that $K$ is a submanifold of $U$, defined by
\begin{equation*}
    K=\{E_z\in U\mid q_a(E_z)=q_i(E_z)=0\}.
\end{equation*}
Note that if $E_z^+\in U^+$, then $\widetilde{E}\in K$. Also, since the equality $B^c(E_z^+)=f^c(\widetilde{E})$ holds, it follows that $\{B^c\}$ have linearly independent differentials at $F_{\bar{x}_0}^+$ if and only if $\{f^c\}$ have linearly independent differentials at $F_{x_0}$, following their restrictions to $K$ (i.e., $(\mathrm{d}f^c)_{F_{\bar{x}_0}}|_{\mathrm{T}_{F_{\bar{x}_0}}K}$ are linearly independent). Equivalently,
\begin{equation*}
    \ker((\mathrm{d}f)_{F_{\bar{x}_0}})\cap \mathrm{T}_{F_{\bar{x}_0}}K=\{0\}.
\end{equation*}
Since $\ker((\mathrm{d}f)_{F_{\bar{x}_0}})=\mathrm{T}_{F_{x_0}}(V_L(\mathcal{I})\cap U)$ and $\ker((\mathrm{d}q)_{F_{\bar{x}_0}})=\mathrm{T}_{F_{\bar{x}_0}}K$, it follows that this is equivalent to requiring that $\{q_a,q_i\}$ have independent differentials on the set $V_L(\mathcal{I})\cap U$.

We now claim that this is indeed the case, by showing that they are already independent on a (yet to be defined) submanifold $\mathcal E$ of $V_L(\mathcal{I})\cap U$. Let $\boldsymbol{\lambda}=(\lambda_1,\dots,\lambda_{L-p})\in \mathbb{R}^{L-p}$ and $\boldsymbol{\mu}=(\mu_1,\dots,\mu_p)\in \mathbb{R}^p$ be two vectors consisting of real numbers. Since $F_{\bar{x}_0}^+\in V_{L+1}(\mathcal{I})$, it follows that for sufficiently small $\lambda_a$ and $\mu_i$ the $L$-dimensional subspace $F_{\boldsymbol{\lambda},\boldsymbol{\mu}}\subset F_{\bar{x}_0}^+$ spanned by the vectors
\begin{equation*}
    Q_{a}(\boldsymbol{\lambda})=\bar{k}_a+\lambda_{a}\bar{\gamma}_y,\quad Q_{i}(\boldsymbol{\mu})=\bar{\gamma}_{x^i}+\mu_{i}\bar{\gamma}_y,
\end{equation*}
lies in $V_L(\mathcal{I})\cap U$. This is based on identifying the coordinates $q_a$ and $q_i$ with $\lambda_a$ and $\mu_i$ and the fact that there all remaining $p$'s and $w$'s vanish (in \eqref{eqn:Kabasis} and \eqref{eqn:Xibasis}). We now set $\mathcal{E}\coloneqq \{F_{\boldsymbol{\lambda},\boldsymbol{\mu}}\mid F_{\boldsymbol{\lambda},\boldsymbol{\mu}}\in U\}$. Since now $\lambda_a$ and $\mu_i$ can be regarded as independent coordinates on $\mathcal E$, the functions $\{q_a,q_i\}$ are independent when restricted to $\mathcal{E}\subset V_L(\mathcal{I})\cap U$.
\end{proof}

Recall that, for $x_0\in M$, $i(x_0)={\bar x}_0 = i_{23}(i_1(x_0))$. We will also use $i_1(x_0)=(x_0,y_0)$, so that in this notation  $F_{{\bar x}_0}^+=I_{23}((\mathcal{L}^{i_{23}}\overline{A})_{(x_0,y_0)})$. For a generic $(x,y)\in X$, we now define $F_{i_{23}(x,y)}^+\coloneqq I_{23}((\mathcal{L}^{i_{23}}\overline{A})_{(x,y)})$.  Since we already know that $\kappa^{\sigma}|_{F_{i_{23}(x,y)}^+}=0$ for all $(x,y)\in X$ by construction (cf. \eqref{eqn:kappasigma2} and our choice of $\mathcal{K}_a$, $\mathcal{X}_i$, $\mathcal{Y}$ in the construction of the system of PDEs in \eqref{eqn:pde-ck}), it suffices to show that $\beta^c\wedge \bar{\gamma}^y|_{F_{i_{23}(x,y)}^+}=0$ for all $(x,y)\in X$.  
 
\begin{lemma}\label{lem:part3}
The forms $\beta^c\wedge \bar{\gamma}^y$ vanish on $\mathcal{L}^{i_{23}}\overline{A}$ for all $1\leq c\leq q$.
\end{lemma}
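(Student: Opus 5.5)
We follow the classical argument of \cite{bryant1991exterior}: show that the pulled-back forms satisfy a linear homogeneous analytic PDE system in $y$ with zero initial data on $y=0$, and conclude by the uniqueness part of the Cauchy--Kowalevski theorem.

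\textbf{Step 1: reduction to $L$-forms on the slices.} Let $J\coloneqq I_{23}$ and write $\mathcal{L}^{i_{23}}\overline{A}$ in the basis $\{\hat{k}_a,\hat{\gamma}_{x^i},\hat{\gamma}_y\}$, with dual basis $\{\hat{k}^a,\hat{\gamma}^{x^i},\hat{\gamma}^y\}$. Since $X$ is given by $u=v=0$ (after the coordinate change), we have $J^\ast\bar{\gamma}^y=\hat{\gamma}^y$, $J^\ast \bar{\gamma}^{u^\sigma}=J^\ast\bar{\gamma}^{v^\rho}=0$ and $J^\ast\bar{k}^a=\hat{k}^a$. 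Hence
\begin{equation*}
J^\ast(\beta^c\wedge\bar{\gamma}^y)=\pm\, g^c(x,y)\,\hat{k}^1\wedge\dots\wedge\hat{k}^{L-p}\wedge\hat{\gamma}^{x^1}\wedge\dots\wedge\hat{\gamma}^{x^p}\wedge\hat{\gamma}^y,
\end{equation*}
where $g^c(x,y)=\beta^c(\bar{k}_a,\bar{\gamma}_{x^i})$ is evaluated along $X$. The goal is therefore to show $g^c\equiv 0$.

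\textbf{Step 2: initial values.} The initial condition $g^c(x,0)=0$ holds because $M$ is an integral manifold. For $y=0$, the vectors $\hat{k}_a,\hat{\gamma}_{x^i}$ map under $J$ into $I(A_x)$, so $g^c(x,0)=0$.

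\textbf{Step 3: derive a PDE for $g^c$.} The key input is that $\delta\beta^c\in\mathcal{I}^{L+1}$. Along $X$, the space $F^+_{i_{23}(x,y)}$ is contained in $U^+$ (after shrinking), and we use \Cref{lem:part2}. Since $\kappa^\sigma$ vanishes on $F^+$, the form $(\delta\beta^c)_{\Omega^+}$ restricted to $F^+$ should be expressible as a combination of $B^{c'}$ and $K^\sigma$. More precisely, the functions $\{B^c,K^\sigma\}$ cut out $V_{L+1}(\mathcal{I})$ with independent differentials, and $(\delta\beta^c)_{\Omega^+}$ vanishes on that zero set. By the analytic Hadamard lemma (\Cref{lem:hadamard}), applied in adapted coordinates where the $B^c,K^\sigma$ are coordinate functions, we get
\begin{equation*}
(\delta\beta^c)_{\Omega^+}=R^c_{c'}B^{c'}+S^c_\sigma K^\sigma
\end{equation*}
with analytic coefficients. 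Evaluating at $F^+_{i_{23}(x,y)}$, where $K^\sigma=0$ and $B^{c'}=\pm g^{c'}$, gives
\begin{equation*}
J^\ast\delta\beta^c=\delta(J^\ast\beta^c)=R^c_{c'}\, g^{c'}\,\mathrm{vol}.
\end{equation*}

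Next we compute $\delta(J^\ast\beta^c)$ with the Lie algebroid formula. Write $J^\ast\beta^c=\pm g^c\,\hat{k}^1\wedge\dots\wedge\hat{\gamma}^{x^p}+\hat{\gamma}^y\wedge(\dots)$, where the last terms carry coefficients $h^c_{\ldots}(x,y)$. The $\hat{\gamma}_y$-component of $\delta(J^\ast\beta^c)$ contains $\partial g^c/\partial y$ (from $\delta f(\hat{\gamma}_y)=\partial_y f$, since the anchor of $\hat{\gamma}_y$ is $\partial_y$). It also contains $x$-derivatives of the $h$'s, and zeroth-order terms in $g,h$ with analytic structure-function coefficients $L^\gamma_{\alpha\beta}$ pulled back. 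This produces an equation
\begin{equation*}
\partial_y g^c=\text{(linear in $g$, $h$, $\partial_x h$)}.
\end{equation*}

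\textbf{Main obstacle.} The hard step is closing the system, that is, controlling the unknowns $h$. I would handle this as in the classical proof: consider the $L$-forms $\beta^c$ restricted to the slices $y=\mathrm{const}$ only. Because $B^c(E^+)=f^c(\widetilde{E})$ depends only on $\widetilde{E}$, the slice with $\hat{\gamma}^y=0$, we should treat all of $J^\ast\beta^c$ restricted to $\ker\hat{\gamma}^y$ as the unknown. The $h$-terms are then already determined coefficients; they are analytic and known from the solution $F$. The resulting equation is a linear analytic system $\partial_y g=P(x,y)\,g+Q(x,y)\,\partial_x g$ in Cauchy--Kowalevski form. If $\partial_x h$-terms genuinely appear, I would instead try to absorb them by also expressing the $h$'s through the Hadamard decomposition. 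Transitivity is what guarantees that every $\partial_{x^i},\partial_y$ is realized as an anchor image, so the derivatives appear with the correct form. The system has zero initial data by Step 2, and $g\equiv0$ is an analytic solution. By uniqueness in the Cauchy--Kowalevski theorem we conclude $g^c\equiv 0$, and hence $\beta^c\wedge\bar{\gamma}^y$ vanishes on $\mathcal{L}^{i_{23}}\overline{A}$.
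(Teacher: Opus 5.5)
Steps 1--2 and the Hadamard decomposition of $\delta\beta^c$ agree with the paper. You also correctly identify the real difficulty: the mixed components $h$ of $J^\ast\beta^c$, i.e.\ those containing $\hat{\gamma}^y$. However, your main way of handling them fails, so there is a genuine gap. If the $h$'s are regarded as known analytic coefficients coming from $F$, the identity $\delta(J^\ast\beta^c)=R^c_{c'}g^{c'}\,\mathrm{vol}$ only gives $\partial_y g^c=(\text{linear in } g)+(\text{terms linear in } h,\ \partial_x h)$. This is an \emph{inhomogeneous} system: $g\equiv 0$ does not solve it unless the source terms vanish, so uniqueness in Cauchy--Kowalevski yields nothing. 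Restricting to the slices $\ker\hat{\gamma}^y$ does not help either. There $J^\ast\beta^c$ is just $\pm g^c$ times a volume form, while the $\hat{\gamma}^y\wedge(\cdots)$ part still contributes $\partial_{x^i}h$ and structure-function terms to the top component of $\delta(J^\ast\beta^c)$. These terms genuinely occur; the paper's formula contains $\partial B^{c,L-p+i}/\partial x^i$. The homogeneous form $\partial_y g=Pg+Q\,\partial_x g$ you write down can only arise after each $h$ has been expressed as a combination of the $g$'s. In particular, the $Q\,\partial_x g$ term comes precisely from differentiating such a combination.

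The missing idea is why such an expression exists, and your fallback (``try to absorb them by the Hadamard decomposition'') does not supply it. \Cref{lem:hadamard} only applies to functions vanishing on $V_{L+1}(\mathcal{I})\cap U^+$, so you must first know that the $h$'s are values of forms in $\mathcal{I}^{L+1}$. This is where the \emph{algebraic} ideal property enters. The forms $\beta^c\wedge\bar{k}^a$ and $\beta^c\wedge\bar{\gamma}^{x^i}$ lie in $\mathcal{I}^{L+1}$. Evaluated on the frame $\kappa_a,\xi_i,\eta$, they give, up to sign, exactly the mixed components $B^{ca},B^{ci}$ of $\beta^c$. Hence, in coordinates adapted to $\{B^b,K^\sigma\}$ (independent by \Cref{lem:part2}), one has $B^{c\alpha}=A^{c\alpha}_bB^b+M^{c\alpha}_\sigma K^\sigma$. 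Along $X$, where $K^\sigma=0$, this reduces to $B^{c\alpha}=A^{c\alpha}_bB^b$. Substituting this and $D^c=G^c_bB^b$ into the expansion of $\delta\beta^c$ produces the homogeneous linear system $\partial_yB^c=\widetilde{G}^c_bB^b+\widetilde{A}^{ci}_b\,\partial_{x^i}B^b$ with $B^c(x,0)=0$. Only then does uniqueness give $B^c\equiv 0$. This is also how the classical argument you cite closes the system, so it does not support treating $h$ as given data.
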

\begin{proof}
Recall that
\begin{equation*}
    V_{L+1}(\mathcal{I})\cap U^+=\{E_z^+\in U^+\mid B^c(E_z^+)=K^{\sigma}(E_z^+)=0\}.
\end{equation*}
Since $\{B^c,K^{\sigma}\}$ have linearly independent differentials, $V_{L+1}(\mathcal{I})\cap U^+$ defines a submanifold of $U^+$. Since $\mathcal{I}$ is an ideal, the forms $\beta^c\wedge \bar{k}^a$ and $\beta^c\wedge \bar{\gamma}^{x^i}$ are also in $\mathcal{I}$ and hence also vanish on $V_{L+1}(\mathcal{I})\cap U^+$ (by definition of an integral element). If one sets
\begin{equation*}
    B^{ca}(E_z^+)\coloneqq \beta^c\wedge \bar{k}^a(\kappa_1(E_z^+),\dots,\kappa_{L-p}(E_z^+),\xi_1(E_z^+),\dots,\xi_p(E_z^+),\eta(E_z^+)),
\end{equation*}
\begin{equation*}
    B^{ci}(E_z^+)\coloneqq \beta^c\wedge \bar{\gamma}^{x^i}(\kappa_1(E_z^+),\dots,\kappa_{L-p}(E_z^+),\xi_1(E_z^+),\dots,\xi_p(E_z^+),\eta(E_z^+)),
\end{equation*}
by \Cref{lem:hadamard}, there exist analytic functions $A$ and $M$ on $U^+$ such that the functions $B^{ca}$ and $B^{ci}$ can be expressed as linear combinations of $B^b$ and $K^{\sigma}$:
\begin{equation*}
    B^{ca}=A_{b}^{ca}B^b+M_{\sigma}^{ca}K^{\sigma},\quad B^{ci}=A_{b}^{ci}B^b+M_{\sigma}^{ci}K^{\sigma}.
\end{equation*}
Since $K^{\sigma}(F_{i_{23}(x,y)}^+)=0$ for all $(x,y)\in X$ by construction (since $\kappa^{\sigma}|_{F_{i_{23}(x,y)}^+}=0$), it follows that
\begin{equation*}
    B^{ca}(F_{i_{23}(x,y)}^+)=A_{b}^{ca}(F_{i_{23}(x,y)}^+)B^b(F_{i_{23}(x,y)}^+),\quad B^{ci}(F_{i_{23}(x,y)}^+)=A_{b}^{ci}(F_{i_{23}(x,y)}^+)B^b(F_{i_{23}(x,y)}^+).
\end{equation*}
Similarly, since $\mathcal{I}$ is a differential ideal, $\delta \beta^c\in \mathcal{I}$, and thus if we set
\begin{equation*}
    D^c(E_z^+)\coloneqq \delta \beta^c(\kappa_1(E_z^+),\dots,\kappa_{L-p}(E_z^+),\xi_1(E_z^+),\dots,\xi_p(E_z^+),\eta(E_z^+)),
\end{equation*}
one can find analytic functions $G$ and $H$ on $U^+$ such that
\begin{equation*}
    D^c=G_b^c B^b+H_{\sigma}^c K^{\sigma},
\end{equation*}
and at $F_{i_{23}(x,y)}^+$, we have that
\begin{equation*}
    D^c(F_{i_{23}(x,y)}^+)=G_b^c(F_{i_{23}(x,y)}^+) B^b(F_{i_{23}(x,y)}^+).
\end{equation*}
For notational convenience, we will write
\begin{equation*}
    B^{ca}(x,y)\coloneqq B^{ca}(F_{i_{23}(x,y)}^+),\quad B^{ci}(x,y)\coloneqq B^{ci}(F_{i_{23}(x,y)}^+),\quad D^c(x,y)\coloneqq D^c(F_{i_{23}(x,y)}^+),
\end{equation*}
for the remainder of this proof. For simplicity, we will also index the basis $(\bar{k}^1,\dots,\bar{k}^{L-p},\bar{\gamma}^{x^1},\dots,\bar{\gamma}^{x^p},\bar{\gamma}^y)$ with $(\bar{e}^1,\dots,\bar{e}^{L+1})$, and replace $(B^{ca},B^{ci})$ with $(B^{ca},B^{c(L-p+i)})=(B^{c\alpha})_{\alpha=1}^L$.

Since $\kappa_a(E_z^+),\xi_i(E_z^+),\eta(E_z^+)$ are dual to $\bar{k}^a,\bar{\gamma}^{x^i},\bar{\gamma}^y$, restricting the form $\beta^c$ to $\mathcal{L}^{i_{23}}\overline{A}$, we can expand it as
\begin{equation*}
    \beta^c|_{\mathcal{L}^{i_{23}}\overline{A}}=B^c(x,y)\bar{e}^1\wedge \dots\wedge \bar{e}^L+\sum_{\alpha=1}^{L} (-1)^{L-\alpha+1} B^{c\alpha}(x,y)\bar{e}^1\wedge \dots\wedge \widehat{\bar{e}^{\alpha}}\wedge \dots\wedge \bar{e}^{L+1},
\end{equation*}
where technically ${\bar e}^\alpha$ stands for $I_{23}^*{\bar e}^\alpha$, because of the restriction to ${\mathcal{L}}^{i_{23}}{\overline A}$. 
The anchor map is given by
\begin{equation*}
    \bar{\rho}(\bar{k}_a)=0,\quad \bar{\rho}(\bar{\gamma}_{x^i})=\frac{\partial}{\partial x^i},\quad \bar{\rho}(\bar{\gamma}_y)=\frac{\partial}{\partial y},
\end{equation*}
hence it follows that
\begin{equation*}
    \delta f|_{\mathcal{L}^{i_{23}}\overline{A}}=\frac{\partial f}{\partial x^i}\bar{e}^{L-p+i}+\frac{\partial f}{\partial y}\bar{e}^{L+1}.
\end{equation*}
Furthermore,
\begin{equation*}
    \delta(\bar{e}^1\wedge \dots\wedge \bar{e}^L)|_{\mathcal{L}^{i_{23}}\overline{A}}=(-1)^L L_{\beta,L+1}^{\beta} \bar{e}^1\wedge \dots\wedge \bar{e}^{L+1},
\end{equation*}
\begin{equation*}
    \delta(\bar{e}^1\wedge \dots\wedge \widehat{\bar{e}^{\alpha}}\wedge \dots\wedge \bar{e}^{L+1})|_{\mathcal{L}^{i_{23}}\overline{A}}=(-1)^{\alpha-1} L_{\beta\alpha}^{\beta} \bar{e}^1\wedge \dots\wedge \bar{e}^{L+1}.
\end{equation*}
If one takes the exterior derivative of $\beta^c$, one obtains
\begin{multline*}
    \delta \beta^c|_{\mathcal{L}^{i_{23}}\overline{A}}=\frac{\partial B^c(x,y)}{\partial y}\bar{e}^{L+1}\wedge \bar{e}^1\wedge \dots\wedge \bar{e}^L+B^c(x,y)\delta (\bar{e}^1\wedge \dots\wedge \bar{e}^L)\\+\sum_{\alpha=1}^L (-1)^{L-\alpha+1} \frac{\partial B^{c\alpha}(x,y)}{\partial x^i} \bar{e}^{L-p+i}\wedge e^1\wedge \dots\wedge \widehat{\bar{e}^{\alpha}}\wedge \dots\wedge \bar{e}^L\wedge \bar{e}^{L+1}\\+\sum_{\alpha=1}^L (-1)^{L-\alpha+1} B^{c\alpha}(x,y) \delta(\bar{e}^1\wedge \dots\wedge \widehat{\bar{e}^{\alpha}}\wedge \dots\wedge \bar{e}^{L+1}),
\end{multline*}
where upon simplification, one gets
\begin{align*}
    \delta \beta^c|_{\mathcal{L}^{i_{23}}\overline{A}}&=(-1)^L \left[\frac{\partial B^c}{\partial y}+L_{\beta,L+1}^{\beta}+\frac{\partial B^{c,L-p+i}}{\partial x^i}+B^{c\alpha}L_{\beta \alpha}^{\beta}\right] \bar{e}^1\wedge \dots\wedge \bar{e}^{L+1}\\&=D^c(x,y) \bar{e}^1\wedge \dots\wedge \bar{e}^{L+1}.
\end{align*}
Using that $D^c(x,y)=G_b^c(x,y)B^b(x,y)$ and $B^{c\alpha}(x,y)=A_b^{c\alpha}(x,y)B^b(x,y)$, we see that the functions $B^c(x,y)$ satisfy the system
\begin{equation*}
    \frac{\partial B^c}{\partial y}=\left[(-1)^L G_b^c-L_{\beta,L+1}^{\beta}\delta_b^c-\frac{\partial A_b^{c,L-p+i}}{\partial x^i}-A_b^{c\alpha}L_{\beta \alpha}^{\beta}\right]B^b-A_{b}^{c,L-p+i}\frac{\partial B^b}{\partial x^i}.
\end{equation*}
In fact, it is a linear system of PDEs of the form
\begin{equation*}
    \frac{\partial B^c(x,y)}{\partial y}=\widetilde{G}_b^c(x,y) B^b(x,y)+\widetilde{A}_b^{ci}(x,y) \frac{\partial B^b(x,y)}{\partial x^i},
\end{equation*}
whose initial conditions are given by
\begin{equation*}
    B^c(x,0)=0,
\end{equation*}
where $\widetilde{G}_b^c$ and $\widetilde{A}_b^{ci}$ are analytic functions. By the Cauchy--Kowalevski theorem (cf. \Cref{sec:preliminaries}), it follows that the only solution is $B^c(x,y)=0$. Hence the forms $\beta^c$ vanish on $\mathcal{L}^{i_{23}}\overline{A}$, and hence so do the forms $\beta^c\wedge \bar{\gamma}^y$. \end{proof}
This concludes the proof of \Cref{thm:maintheorem}.
\end{proof}

Applying this theorem inductively gives the following useful corollary (we will also call this the Cartan--K{\"a}hler theorem).

\begin{corollary}[Cartan--K{\"a}hler, second version]\label{cor:maintheorem}
Let $\mathcal{I}\subset \Lambda^{\ast}(\overline{A})$ be an analytic differential ideal of an analytic transitive Lie algebroid $\bar{\tau}\colon \overline{A}\to \overline{M}$, and let $E_z\subset \overline{A}_z$ be a $\dim(\ker(\bar{\rho}_z))$-ordinary integral element of $\mathcal{I}$. Then there exists an integral manifold $j\colon X\to \overline{M}$ of $\mathcal{I}$ which passes through $z$ and satisfies $J((\mathcal{L}^{j}\overline{A})_{x})=E_z$, where $x\in X$ is a point such that $z=j(x)$, and $J$ is the $j$-induced map.
\end{corollary}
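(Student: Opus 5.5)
Set $l\coloneqq \dim(\ker(\bar{\rho}_z))=N-\dim(\overline{M})$. The plan is induction along the flag, applying \Cref{thm:maintheorem} once per step. By hypothesis there is an $l$-integral flag $(E_l)_z\subset (E_{l+1})_z\subset\dots\subset (E_{l+n})_z=E_z$ with $(E_{l+j})_z$ Kähler-regular for $j\le n-1$.

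The first step is to realize $(E_l)_z$ as the image of a prolongation. Since $\dim(E_l)_z=l=\dim\ker(\bar\rho_z)$, I will first argue that $(E_l)_z=\ker(\bar\rho_z)$. This is the natural situation: an integral manifold of dimension $0$ is a point $j_0\colon\{x\}\to\overline{M}$, and $\mathcal{L}^{j_0}\overline{A}$ has fiber $\{(\bar a,0)\mid \bar\rho(\bar a)=0\}$, so $J_0((\mathcal{L}^{j_0}\overline{A})_x)=\ker(\bar\rho_z)$. If the flag does not begin with $\ker\bar\rho_z$, I would either build the flag inside the definition so that this holds, or I would note that every $I(A_m)$ contains $I(\ker\rho_m)=\ker\bar\rho_{i(m)}$. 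The latter forces any $E$ that is realized as $I(A_m)$ to contain the kernel, so the statement implicitly requires this. I will treat $(E_l)_z=\ker\bar\rho_z$ as part of what $l$-ordinary must mean here. Then the point $M_0=\{z\}$ is a $0$-dimensional integral manifold: $\mathcal{I}^0=\varnothing$, and $(E_l)_z$ is integral by \Cref{prop:subspaceintelement}. By the hypothesis on $j=0$, it is also Kähler-regular. Connectedness is trivial.

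Inductive step: suppose $M_j$ is a connected analytic $j$-dimensional integral manifold through $z$ with $I_j((\mathcal{L}^{i_j}\overline{A})_{x})=(E_{l+j})_z$, and $j\le n-1$. I need $M_j$ to be Kähler-regular near $x$. Kähler-regularity is an open condition on $V_{l+j}(\mathcal{I})$: Kähler-ordinary is open by definition, and $r$ is locally constant on a neighbourhood. So after shrinking $M_j$ around $x$, every $I_j(A_m)$ lies in that neighbourhood and the manifold is Kähler-regular with $r(M_j)=r((E_{l+j})_z)$. This quantity is $\ge 0$ because $(E_{l+j+1})_z\subset H((E_{l+j})_z)$ by \Cref{prop:polar}. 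Next I choose $R$ of codimension $r$ containing $M_j$ and satisfying \eqref{eqn:transverse}, so that additionally $(E_{l+j+1})_z\subset I_R((\mathcal{L}^{i_R}\overline{A})_x)$. The construction is linear algebra in a chart. Write $H=H((E_{l+j})_z)$, which has dimension $r+l+j+1$ and contains $\ker\bar\rho_z$. Then $\bar\rho(H)\subset \mathrm{T}_z\overline{M}$ has dimension $r+j+1$ and contains $\mathrm{T}_xM_j$ and $\bar\rho((E_{l+j+1})_z)$. Pick a subspace $W\subset \mathrm{T}_z\overline{M}$ of codimension $r$ with $W\supset \bar\rho((E_{l+j+1})_z)$ and $W+\bar\rho(H)=\mathrm{T}_z\overline{M}$. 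This is possible since $\bar\rho((E_{l+j+1})_z)$ is $(j+1)$-dimensional inside $\bar\rho(H)$, so take $W$ to be it plus a complement of $\bar\rho(H)$. Let $R$ be an analytic submanifold, linear in coordinates, containing $M_j$ with $\mathrm{T}_zR=W$. Then $I_R(\mathcal{L}^{i_R}\overline{A})_x=\bar\rho^{-1}(W)$, and its sum with $H$ is $\overline{A}_z$ at $x$. By openness this holds near $x$ after shrinking.

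\Cref{thm:maintheorem} then gives a $(j+1)$-dimensional integral manifold $M_{j+1}$ with $M_j\subset M_{j+1}\subset R$. The remaining issue, which I expect to be the main obstacle, is to check that $I_{j+1}((\mathcal{L}^{i_{j+1}}\overline{A})_x)=(E_{l+j+1})_z$, i.e.\ that the theorem produces the prescribed tangent element. The proof of \Cref{thm:maintheorem} shows that at $x_0$ the new element is $F^+_{\bar x_0}=F_{\bar x_0}+\mathbb{R}\mathcal{Y}(F_{\bar x_0})$ with $\mathcal{Y}(F_{\bar x_0})=\bar\gamma_y$, and that it lies in $H\cap I_R(\cdots)$. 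That intersection has dimension $L+1$ and therefore equals $F_{\bar x_0}\oplus\mathbb{R}\bar\gamma_y$. Since $(E_{l+j+1})_z$ is also an $(L+1)$-dimensional subspace of $H\cap \bar\rho^{-1}(W)$ containing $F_{\bar x_0}$, the two coincide. Hence the choice of $R$ pins down the extension. Iterating $n$ times yields $X=M_n$ with $J((\mathcal{L}^j\overline{A})_x)=E_z$.
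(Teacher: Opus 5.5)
Your proposal is correct and follows essentially the paper's route. You start from the point $\{z\}$, whose prolongation image is $\ker(\bar\rho_z)$, apply \Cref{thm:maintheorem} once per step of the flag with an $R$ of codimension $r$ whose prolongation image contains the next flag element, and identify the new element at $x$ from the dimension count $\dim\bigl(I_R((\mathcal{L}^{i_R}\overline{A})_x)\cap H\bigr)=L+1$. The differences are minor: you build each $R$ from a subspace $W\subset \mathrm{T}_z\overline{M}$ rather than from the paper's single chart adapted to the nested polar spaces (the codimensions $c_q$ and the graphs $y^k=f^k_{q+1}$). You also make explicit two points the paper leaves implicit: the flag must start at $(E_l)_z=\ker(\bar\rho_z)$, and each $M_j$ must be shrunk so that it is K{\"a}hler-regular at every point.
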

Note that when $\overline{A}=\mathrm{T}\overline{M}$, \Cref{cor:maintheorem} reduces to the classical statement of the Cartan--K{\"a}hler theorem which we have also mentioned in the introduction (Chap.\ 3, \S~2, Cor. 2.3 of \cite{bryant1991exterior}): If $E_z\subset \mathrm{T}_z\overline{M}$ is an ordinary integral element, there exists an integral manifold through $z$ whose tangent space at $z$ is precisely $E_z$.
\begin{proof}
This proof uses some ideas from \cite{bryant1991exterior,ivey2016cartan}. For the entirety of this proof, we will use the notation $I_{(\ast)}$ for the $i_{(\ast)}$-induced map. Let $p\coloneqq \dim(E_z)-\dim(\ker(\bar{\rho}_z))$. By hypothesis, there exists an $\dim(\ker(\bar{\rho}_z))$-integral flag
\begin{equation*}
    (E_{\dim(\ker(\bar{\rho}_z))})_z\subset (E_{\dim(\ker(\bar{\rho}_z))+1})_z\subset \dots\subset (E_{\dim(\ker(\bar{\rho}_z))+p})_z=E_z,
\end{equation*}
such that $\dim (E_{\dim(\ker(\bar{\rho}_z))+q})_z=\dim(\ker(\bar{\rho}_z))+q$ for $0\leq q\leq p$ and $(E_{\dim(\ker(\bar{\rho}_z))+q})_z$ is K{\"a}hler-regular for $0\leq q\leq p-1$. Define
\begin{equation*}
    c_q\coloneqq \codim H((E_{\dim(\ker(\bar{\rho}_z))+q})_z)=\dim \overline{A}_z-\dim H((E_{\dim(\ker(\bar{\rho}_z))+q})_z),
\end{equation*}
for $0\leq q\leq p-1$ and note that the $c_q$ are non-decreasing. Let $\bar{\gamma}\colon \mathrm{T}\overline{M}\to \overline{A}$ be a splitting of $\bar{\rho}$, and let $x^1,\dots,x^p,y^1,\dots,y^s$ (where $s\coloneqq \dim(\overline{M})-p$) be local coordinates of $\overline{M}$ centered at $z$ defined on an open set $V\subset \overline{M}$ chosen such that
\begin{equation*}
    (E_{\dim(\ker(\bar{\rho}_z))+q})_z=\spn\{\bar{k}_a(z),\bar{\gamma}_{x^1}(z),\dots,\bar{\gamma}_{x^q}(z)\}
\end{equation*}
for $0\leq q\leq p$ and $H((E_{\dim(\ker(\bar{\rho}_z))+q})_z)$ is annihilated by $\bar{\gamma}^{y^1}(z),\dots,\bar{\gamma}^{y^{c_q}}(z)$ for $0\leq q\leq p-1$. Here, $\bar{\gamma}_{x^i}\coloneqq \bar{\gamma}\left(\frac{\partial}{\partial x^i}\right)$, $\bar{\gamma}_{y^k}\coloneqq \bar{\gamma}\left(\frac{\partial}{\partial y^k}\right)$, and $\{\bar{k}^a,\bar{\gamma}^{x^i},\bar{\gamma}^{y^k}\}$ denotes the dual basis of $\{\bar{k}_a,\bar{\gamma}_{x^i},\bar{\gamma}_{y^k}\}$.

Now, let $R_0\subset V$ be the submanifold defined by setting $x^2=\dots=x^p=0$ and $y^k=f_1^k(x^1)$ for $k>c_0$, where $f_1^k$ are some analytic functions such that $f_1^k(0)=0$ and $\frac{\partial f_1^k}{\partial x^1}(0)=0$. We now wish to show the existence of a $1$-dimensional integral manifold $X_1\subset R_0$ using the Cartan--K{\"a}hler theorem. To do so, we require that $R_0$ has codimension $r((E_{\dim(\ker(\bar{\rho}_z))})_z)$. This is indeed the case, since the number of unconstrained coordinates is
\begin{align*}
    1+c_0=1+\dim \overline{A}_z-\dim H((E_{\dim(\ker(\bar{\rho}_z))})_z)&=1+N-(r((E_{\dim(\ker(\bar{\rho}_z))})_z)+\dim(\ker(\bar{\rho}_z))+1)\\&=1+N-(r((E_{\dim(\ker(\bar{\rho}_z))})_z)+N-\dim \overline{M}+1)\\&=\dim \overline{M}-r((E_{\dim(\ker(\bar{\rho}_z))})_z).
\end{align*}
Let $X_0=\{x_0\}$ be a single point and let $i_0\colon X_0\to \overline{M}$ be the map $x_0\mapsto z$. Let $i_{12,0}\colon X_0\to R_0$ and $i_{3,0}\colon R_0\to \overline{M}$ be the inclusions. Then $\mathrm{T}_{i_{12,0}(x_0)}R_0$ is spanned by the vectors
\begin{equation*}
    \left\{\frac{\partial}{\partial x^1}\Big|_{i_{12,0}(x_0)},\frac{\partial}{\partial y^1}\Big|_{i_{12,0}(x_0)},\dots,\frac{\partial}{\partial y^{c_0}}\Big|_{i_{12,0}(x_0)}\right\}.
\end{equation*}
Using that $\mathcal{L}^{i_{3,0}}\overline{A}\simeq \ker(\tilde{\rho})\oplus \tilde{\gamma}(\mathrm{T} R)$, where $\tilde{\rho}\colon \mathcal{L}^{i_{3,0}}\overline{A}\to \mathrm{T}R_0$ is the anchor map of the $i_{3,0}$-prolongation of $\overline{A}$, and $\tilde{\gamma}\colon \mathrm{T}R_0\to \mathcal{L}^{i_{3,0}}\overline{A}$ is the $(i_{3,0},\bar{\gamma})$-induced splitting, one sees that
\begin{equation*}
    I_{3,0}((\mathcal{L}^{i_{3,0}}\overline{A})_{i_{12,0}(x_0)})=\spn\left\{\bar{k}_a(z),\bar{\gamma}_{x^1}(z),\bar{\gamma}_{y^1}(z),\dots,\bar{\gamma}_{y^{c_0}}(z)\right\}.
\end{equation*}
Finally, since $\spn\left\{\bar{\gamma}_{y^1}(z),\dots,\bar{\gamma}_{y^{c_0}}(z)\right\}$ is complementary to $H((E_{\dim(\ker(\bar{\rho}_z))})_z)$, it follows that
\begin{equation*}
    \overline{A}_z=\spn\left\{\bar{\gamma}_{y^1}(z),\dots,\bar{\gamma}_{y^{c_0}}(z)\right\}\oplus H((E_{\dim(\ker(\bar{\rho}_z))})_z),
\end{equation*}
therefore $\overline{A}_z=I_{3,0}((\mathcal{L}^{i_{3,0}}\overline{A})_{i_{12,0}(x_0)})+H((E_{\dim(\ker(\bar{\rho}_z))})_z)$, since
\begin{equation*}
    I_{3,0}((\mathcal{L}^{i_{3,0}}\overline{A})_{i_{12,0}(x_0)})\supset \spn\{\bar{\gamma}_{y^1}(z),\dots,\bar{\gamma}_{y^{c_0}}(z)\}.
\end{equation*}
It remains to show that $(E_{\dim(\ker(\bar{\rho}_z))})_z=I_0((\mathcal{L}^{i_0}\overline{A})_{x_0})$. This is indeed the case, because
\begin{equation*}
    \mathcal{L}^{i_0}\overline{A}=\{(\bar{a}_z,0)\in \overline{A}\times \mathrm{T}X_0\mid \bar{\rho}(\bar{a}_z)=0\}\simeq \ker(\bar{\rho}).
\end{equation*}
Denote the inclusion $X_i\to X_l$ for $i\leq l$ by $j_{i,l}$. Hence, by using the Cartan--K{\"a}hler theorem, we know that there exists a $1$-dimensional, connected, analytic integral manifold $i_1\colon X_1\to \overline{M}$ of $\mathcal{I}$ satisfying $X_0\subset X_1\subset R_0$. Besides, the construction is such that $I_1((\mathcal{L}^{i_1}\overline{A})_{j_{0,1}(x_0)})=(E_{\dim(\ker(\bar{\rho}_z))+1})_z$. Indeed, if $X_1\subset R_0$, then $I_1((\mathcal{L}^{i_1}\overline{A})_{j_{0,1}(x_0)})\subset I_{3,0}((\mathcal{L}^{i_{3,0}}\overline{A})_{i_{12,0}(x_0)})$. Assuming $X_1$ is a $1$-dimensional integral manifold, it follows from \Cref{prop:intmfdintelement} and \Cref{prop:polar} that $I_1((\mathcal{L}^{i_1}\overline{A})_{j_{0,1}(x_0)})\subset H((E_{\dim(\ker(\bar{\rho}_z))})_z)$ (note that $\dim I_1((\mathcal{L}^{i_1}\overline{A})_{j_{0,1}(x_0)})=\dim I_0((\mathcal{L}^{i_0}\overline{A})_{x_0})+1$). Therefore, it follows that $I_1((\mathcal{L}^{i_1}\overline{A})_{j_{0,1}(x_0)})\subset I_{3,0}((\mathcal{L}^{i_{3,0}}\overline{A})_{i_{12,0}(x_0)})\cap H((E_{\dim(\ker(\bar{\rho}_z))})_z)=\spn\{\bar{k}_a(z),\bar{\gamma}_{x^1}(z)\}=(E_{\dim(\ker(\bar{\rho}_z))+1})_z$, but since both sides are $1$-dimensional, it follows that $I_1((\mathcal{L}^{i_1}\overline{A})_{j_{0,1}(x_0)})=(E_{\dim(\ker(\bar{\rho}_z))+1})_z$.

Next, let $R_1\subset V$ be the submanifold of codimension $r((E_{\dim(\ker(\bar{\rho}_z))+1})_z)$ defined by setting $x^3=\dots=x^p=0$ and $y^k=f_2^k(x^1,x^2)$ for $k>c_1$, where $f_2^k$ are some real analytic functions such that $f_2^k(x^1,0)=f_1^k(x^1)$ (so that $R_1$ contains $X_1$) and $\frac{\partial f_2^k}{\partial x^1}(0,0)=\frac{\partial f_2^k}{\partial x^2}(0,0)=0$ (note that the condition $\frac{\partial f_2^k}{\partial x^1}(0,0)=0$ is consistent with the condition $\frac{\partial f_1^k}{\partial x^1}(0)=0$ on $f_1^k$). Then, by a similar argument, if $i_{12,0}\colon X_1\to R_1$ and $i_{3,1}\colon R_1\to \overline{M}$ we can see that
\begin{equation*}
    I_{3,1}((\mathcal{L}^{i_{3,1}}\overline{A})_{i_{12,1}(j_{0,1}(x_0))})+H(I_1((\mathcal{L}^{i_1}\overline{A})_{j_{0,1}(x_0)}))=\overline{A}_{z}.
\end{equation*}
Indeed,
\begin{equation*}
    I_{3,1}((\mathcal{L}^{i_{3,1}}\overline{A})_{i_{12,1}(j_{0,1}(x_0))})=\spn\left\{\bar{k}_a(z),\bar{\gamma}_{x^1}(z),\bar{\gamma}_{y^1}(z),\dots,\bar{\gamma}_{y^{c_1}}(z)\right\}.
\end{equation*}
We can then shrink $V$ so that
\begin{equation*}
    I_{3,1}((\mathcal{L}^{i_{3,1}}\overline{A})_{i_{12,1}(y)})+H(I_1((\mathcal{L}^{i_1}\overline{A})_y))=\overline{A}_{i_1(y)},
\end{equation*}
for all $y\in X_1$. By a similar reasoning as before, using the Cartan--K{\"a}hler theorem, one obtains a $2$-dimensional integral manifold $i_2\colon X_2\to \overline{M}$ of $\mathcal{I}$ (with $X_1\subset X_2\subset R_1$) such that $I_2((\mathcal{L}^{i_2}\overline{A})_{j_{0,2}(x_0)})=(E_{\dim(\ker(\bar{\rho}_z))+2})_z$.

Continuing in this manner, we obtain a $p$-dimensional integral manifold $i_p\colon X_p\to \overline{M}$ of $\mathcal{I}$ (with $X_{p-1}\subset X_p\subset R_{p-1}$) such that $I_p((\mathcal{L}^{i_p}\overline{A})_{j_{0,p}(x_0)})=E_z$ (in the last step, $R_{p-1}\subset V$ is defined by $y^k=f_p^k(x^1,\dots,x^p)$ for $k>c_{p-1}$). It follows that $j=i_p$ is the desired integral manifold.

In summary, we have used the following notation  (we omit $i_l\colon X_l\to \overline{M}$ for $1\leq l\leq p-1$ from the diagram for clarity):
\begin{equation*}
\begin{tikzcd}
	{X_0} & {R_0} && \\
	{X_1} & {R_1} \\
	{X_2} & {R_2} && {\overline{M}} \\
	\vdots & \vdots \\
	{X_{p-1}} & {R_{p-1}} \\
	{X_p}
	\arrow["{i_{12,0}}", from=1-1, to=1-2]
	\arrow["{j_{0,1}}", from=1-1, to=2-1]
    \arrow["{j_{0,2}}"', bend left=-25, from=1-1, to=3-1]
    \arrow["{i_0}", bend left=70, from=1-1, to=3-4]
	\arrow["{j_{0,p-1}}", bend left=-50, from=1-1, to=5-1]
	\arrow["{j_{0,p}}"', bend left=-55, from=1-1, to=6-1]
	\arrow[from=1-2, to=2-2]
	\arrow["{i_{3,0}}", bend left=10, from=1-2, to=3-4]
	\arrow["{i_{12,1}}", from=2-1, to=2-2]
	\arrow["{j_{1,2}}", from=2-1, to=3-1]
	\arrow[from=2-2, to=3-2]
	\arrow["{i_{3,1}}", from=2-2, to=3-4]
	\arrow["{i_{12,2}}", from=3-1, to=3-2]
	\arrow[from=3-1, to=4-1]
	\arrow["{i_{3,2}}", from=3-2, to=3-4]
	\arrow[from=3-2, to=4-2]
	\arrow[from=4-1, to=5-1]
	\arrow[from=4-2, to=5-2]
	\arrow["{i_{12,p-1}}", from=5-1, to=5-2]
	\arrow["{j_{p-1,p}}", from=5-1, to=6-1]
	\arrow["{i_{3,p-1}}", from=5-2, to=3-4]
	\arrow["{j=i_p}"', bend left=-30, from=6-1, to=3-4]
\end{tikzcd}
\end{equation*}
The inclusions $j_{l,l+1}\colon X_l\to X_{l+1}$ for $0\leq l\leq p-1$ correspond to the extension of $X_l$ to $X_{l+1}$ by a single application of the Cartan--K{\"a}hler theorem.
\end{proof}

%% file: sections/applications.tex
In this section, we provide an example where we apply the Cartan--K{\"a}hler theorem. In particular, for a specific EDS on a Lie algebroid, we will construct a family of integral elements which we will denote by $\{E_z\mid z\in \overline{M}\}$. We will show that for each element in this family, there exists an integral manifold with the necessary properties in \Cref{cor:maintheorem}.

Let $\overline{A}=\mathrm{T}\mathbb{R}^3\times \mathbb{R}^1$ be a vector bundle over $\overline{M}=\mathbb{R}^3$. If $(x^1,x^2,x^3)$ are coordinates on $\overline{M}$, then a basis of sections of $\overline{A}$ is given by $\{\frac{\partial}{\partial x^1},\frac{\partial}{\partial x^2},\frac{\partial}{\partial x^3},\bar{k}_1\}$, where $\bar{k}_1\colon \overline{M}\to \overline{A}$ is given by $z\mapsto (0,0,0,1)$. We denote its dual basis by $\{\mathrm{d}x^1,\mathrm{d}x^2,\mathrm{d}x^3,\bar{k}^1\}$. We define a Lie algebroid structure on $\overline{A}$ by defining the anchor $\bar{\rho}\colon \overline{A}\to \mathrm{T}\mathbb{R}^3$ on the given basis of sections such that
\begin{equation*}
    \bar{\rho}\left(\frac{\partial}{\partial x^i}\right)=\frac{\partial}{\partial x^i},\quad \bar{\rho}(\bar{k}_1)=0,
\end{equation*}
and its Lie bracket such that
\begin{equation*}
    \left\llbracket\frac{\partial}{\partial x^i},\frac{\partial}{\partial x^j}\right\rrbracket=\left\llbracket\frac{\partial}{\partial x^i},\bar{k}_1\right\rrbracket=0.
\end{equation*}
Let $\mathcal{I}=\langle \theta^1=\mathrm{d}x^3-x^2\, \mathrm{d}x^1,\theta^2=\mathrm{d}x^2\rangle_{\mathrm{alg}}$. Since
\begin{equation*}
    \delta(\mathrm{d}x^i)=0,\quad \delta x^2=\mathrm{d}x^2,
\end{equation*}
it follows that
\begin{equation*}
    \delta \theta^1=-\mathrm{d}x^2\wedge \mathrm{d}x^1=\mathrm{d}x^1\wedge \mathrm{d}x^2=\mathrm{d}x\wedge \theta^2\in \mathcal{I},
\end{equation*}
\begin{equation*}
    \delta \theta^2=0\in \mathcal{I}.
\end{equation*}
Hence, $\mathcal{I}$ is a differential ideal. At each $z\in \overline{M}$, there is a unique integral element of dimension $2$, namely
\begin{equation*}
    E_z\coloneqq \{v\in \overline{A}_z\mid \theta^a(v)=0~\mathrm{for}~1\leq a\leq 2\}=\spn\left\{\left(\frac{\partial}{\partial x^1}+x^2 \frac{\partial}{\partial x^3}\right)\Big|_z,\bar{k}_1(z)\right\}.
\end{equation*}
In fact, every integral element of $\mathcal{I}$ based at $z$ must be a subspace of $E_z$, since $H((0)_z)=E_z$. Thus, if
\begin{equation*}
    (E_1)_z\subset (E_2)_z=E_z
\end{equation*}
is any $1$-integral flag, then $H((E_1)_z)=E_z$. We must now show that $(E_2)_z=E_z$ is a $1$-ordinary integral element. We therefore only need to find a $1$-dimensional K{\"a}hler-regular subspace $(E_1)_z$ of $E_z$.

We claim that $(E_1)_z\coloneqq \spn\{\bar{k}_1(z)\}$ is a K{\"a}hler-regular integral element. Since $(E_1)_z\subset E_z$, it follows from \Cref{prop:subspaceintelement} that $(E_1)_z$ is an integral element. Now, consider $\Omega=\bar{k}^1\in \Lambda^1(\overline{A})$. Then $\Omega_{(E_1)_z}\neq 0$, and we have that
\begin{equation*}
    \mathcal{F}_{\Omega}(\mathcal{I})=\{\theta_{\Omega}\mid \theta\in \mathcal{I}^1\}=\spn\{\theta_{\Omega}^1,\theta_{\Omega}^2\}.
\end{equation*}
Every $(F_1)_{\bar{m}}\in G_1(\overline{A},\Omega)$ has a unique basis of the form
\begin{equation*}
    X_1((F_1)_{\bar{m}})=\bar{k}_1(\bar{m})+p^1((F_1)_{\bar{m}}) \frac{\partial}{\partial x^1}\Big|_{\bar{m}}+p^2((F_1)_{\bar{m}}) \frac{\partial}{\partial x^2}\Big|_{\bar{m}}+p^3((F_1)_{\bar{m}}) \frac{\partial}{\partial x^3}\Big|_{\bar{m}}.
\end{equation*}
Therefore, the functions $x^1,x^2,x^3,p^1,p^2,p^3$ form a coordinate system on $G_1(\overline{A},\Omega)$. Note that
\begin{equation*}
    \theta_{\Omega}^1((F_1)_{\bar{m}})=\frac{\theta^1(X_1)}{\Omega(X_1)}=-x^2 p^1+p^3,\quad \theta_{\Omega}^2((F_1)_{\bar{m}})=p^2.
\end{equation*}
Note that the common zeros of $\theta_{\Omega}^1$ and $\theta_{\Omega}^2$ are precisely $V_1(\mathcal{I},\Omega)$. We also have that
\begin{equation*}
    Z(\mathcal{F}_{\Omega}(\mathcal{I}))=\{(F_1)_{\bar{m}}\in G_1(\overline{A},\Omega)\mid \theta_{\Omega}^1((F_1)_{\bar{m}})=\theta_{\Omega}^2((F_1)_{\bar{m}})=0\},
\end{equation*}
and moreover, $\theta_{\Omega}^1, \theta_{\Omega}^2$ have linearly independent differentials, since
\begin{equation*}
    \mathrm{d}\theta_{\Omega}^1=-p^1\, \mathrm{d}x^2-x^2\, \mathrm{d}p^1+\mathrm{d}p^3,\quad \mathrm{d}\theta_{\Omega}^2=\mathrm{d}p^2.
\end{equation*}
It follows that $(E_1)_z$ is K{\"a}hler-ordinary (in fact, our reasoning shows that every point in $V_1(\mathcal{I},\Omega)$ is K{\"a}hler-ordinary). It is also K{\"a}hler-regular, since $H((F_1)_{\bar{m}})=E_{\bar{m}}$. Since $\theta^1,\theta^2$ are pointwise linearly independent, it follows that $\dim(H((F_1)_{\bar{m}}))=2$ for all $\bar{m}\in \overline{M}$, and therefore $r$ is a locally constant function. We may therefore conclude using the second version of the Cartan--K{\"a}hler theorem that for every $z\in \overline{M}$, there exists an integral manifold $j\colon X\to \overline{M}$ of $\mathcal{I}$ which passes through $z$ and satisfies $J((\mathcal{L}^j \overline{A})_x)=E_z$, where $x\in X$ is a point such that $z=j(x)$.

Although the Cartan--K{\"a}hler theorem only guarantees existence, in this case it is possible to give an explicit example of such an integral manifold for each $z=(x_0^1,x_0^2,x_0^3)\in \overline{M}$. We claim that
\begin{equation*}
    j\colon \mathbb{R}\longrightarrow \overline{M},\quad x^1\longmapsto \left(x^1,x_0^2,x_0^2(x^1-x_0^1)+x_0^3\right),
\end{equation*}
defines such an integral manifold. Note that if $\bar{a}_{\bar{m}}=a_1\frac{\partial}{\partial x^1}+a_2 \frac{\partial}{\partial x^2}+a_3 \frac{\partial}{\partial x^3}+a_4 \bar{k}_1$ and $v_m=b_1 \frac{\partial}{\partial x^1}$, then $\bar{\rho}(\bar{a}_{\bar{m}})=\mathrm{T}j(v_m)$ requires $a_1=b_1$ and $a_3=x_0^2 b_1$. Hence, one has
\begin{equation*}
    \mathcal{L}^j \overline{A}=\spn\left\{\left(\frac{\partial}{\partial x^1}+x_0^2 \frac{\partial}{\partial x^3},\frac{\partial}{\partial x^1}\right),\left(\bar{k}_1,0\right) \right\}.
\end{equation*}
Applying $J$ gives
\begin{equation*}
    J((\mathcal{L}^j \overline{A})_{x_0^1})=\spn\left\{\frac{\partial}{\partial x^1}\Big|_{j(x_0^1)}+x_0^2 \frac{\partial}{\partial x^3}\Big|_{j(x_0^1)},\bar{k}_1(j(x_0^1)) \right\}=E_z,
\end{equation*}
as required.

%% file: sections/inv-inverse.tex
This section is based on the discussion of the EDS approach to the time-dependent invariant inverse problem in \cite{mestdag2026edsla}. In that article, we showed that we may solve the following problem by finding integral manifolds of an appropriate EDS on a Lie algebroid.

\emph{Given a (possibly time-dependent) $G$-invariant second-order system on a Lie group $G$, when does a (possibly time-dependent) regular $G$-invariant Lagrangian $L$ whose Euler--Lagrange equations are equivalent to this system exist?}

The appropriate Lie algebroid is constructed as follows. Let $\mathfrak{g}$ be the Lie algebra of $G$. Consider the vector bundle
\begin{equation*}
    A=\mathbb{R}\times \mathfrak{g}\times \mathbb{R}\times \mathfrak{g}\times \mathfrak{g}\longrightarrow M=\mathbb{R}\times \mathfrak{g}.
\end{equation*}
We put coordinates $(w^i)$ on $\mathfrak{g}$. A basis of sections of this bundle is given by $\{T_0,e_i,W_i\}$, where
\begin{equation*}
    T_0\colon (t,w)\longmapsto (t,w,1,0,0),
\end{equation*}
\begin{equation*}
    e_i\colon (t,w)\longmapsto (t,w,0,E_i,0),\quad W_i\colon (t,w)\longmapsto (t,w,0,0,E_i),
\end{equation*}
where $E_i$ is a basis of $\mathfrak{g}$. If $C_{ij}^k$ are the structure constants of the Lie algebra, the anchor map satisfies
\begin{equation*}
    \rho(T_0)=\frac{\partial}{\partial t},\quad \rho(e_i)=w^k C_{ki}^j \frac{\partial}{\partial w^j},\quad \rho(W_i)=\frac{\partial}{\partial w^i},
\end{equation*}
and its bracket is given by
\begin{equation*}
    \llbracket T_0,e_i\rrbracket=0,\quad \llbracket T_0,W_i\rrbracket=0,
\end{equation*}
\begin{equation*}
    \llbracket e_i,e_j\rrbracket=C_{ij}^k e_k,\quad \llbracket e_i,W_j\rrbracket=C_{ij}^k W_k,\quad \llbracket W_i,W_j\rrbracket=0.
\end{equation*}
We call $A$ the \emph{IP Lie algebroid}. We then carried out a convenient change of basis, yielding the basis $\{\Gamma_0,W_i,H_i\}$, whose dual we denoted by $\{\Gamma^0,\Psi^i,\Theta^i\}$. We then showed (cf. Thm. 2 of \cite{mestdag2026edsla}) that a reduced multiplier matrix exists for a given $G$-invariant second order system if and only if integral manifolds of a special type exist for an EDS generated by a set of $1$-forms on a specific $p$-prolongation Lie algebroid of $A$. The ``special type'' in the previous sentence refers to a discussion on an \emph{independence condition}, that we will come back to at the end of this section.

For the sake of simplicity, we will consider the invariant inverse problem for the canonical connection on the $1$-dimensional Lie group $G=\mathbb{R}^1$ of the real line (as considered in Example 4 of \cite{mestdag2026edsla}). In this case, the $p$-prolongation Lie algebroid, which we will denote by $\overline{A}\coloneqq \mathcal{L}^p A$ is given by
\begin{equation*}
    \overline{A}=A\times \mathrm{T}\mathbb{R}^1\times \mathrm{T}\mathbb{R}^2\longrightarrow \overline{M}=M\times \mathbb{R}^1\times \mathbb{R}^2,
\end{equation*}
where $\tau\colon A\to M$ has a basis of sections $\{\Gamma_0,W_1,H_1\}$. If $(t,w,s,P,Q)$ are the coordinates of $\overline{M}$ (note that in \cite{mestdag2026edsla}, we used the notation $(t,w^1,s_{11},P_{111},Q_{111})$ instead), the given basis of $A$ extends to a basis of $\overline{A}$, given by $\{\Gamma_0,W_1,H_1,\frac{\partial}{\partial s},\frac{\partial}{\partial P},\frac{\partial}{\partial Q}\}$. Its dual basis is given by $\{\Gamma^0,\Psi^1,\Theta^1,\delta s,\delta P,\delta Q\}$. The suitable differential ideal for this problem is given by $\mathcal{I}=\langle \sigma_{11}\rangle=\langle \sigma_{11},\delta \sigma_{11} \rangle_{\mathrm{alg}}$, where
\begin{equation*}
    \sigma_{11}=\delta s+P \Psi^1+Q \Theta^1,
\end{equation*}
\begin{equation*}
    \delta \sigma_{11}=(\delta P+Q \Gamma^0)\wedge \Psi^1+\delta Q\wedge \Theta^1.
\end{equation*}
There, it was shown that integral manifolds of $\mathcal{I}$ are generally given by
\begin{equation*}
    i\colon M\longrightarrow \overline{M},\quad (t,w)\longmapsto \left(t,w,s=g(w),P=-\frac{\partial g(w)}{\partial w},Q=0\right),
\end{equation*}
where $g$ is any nowhere zero smooth function.

We now use the Cartan--K{\"a}hler theorem to construct integral manifolds of this type. Let $z\coloneqq (0,0,c_1,c_2,0)$, where $c_1\neq 0$ and $c_2\in \mathbb{R}$. Let $d_1,d_2,d_3\in \mathbb{R}$ and consider
\begin{equation*}
    E_z\coloneqq \spn\left\{\Gamma_0(z),W_1(z)+d_1 \frac{\partial}{\partial s}\Big|_z+d_2 \frac{\partial}{\partial P}\Big|_z+d_3\frac{\partial}{\partial Q}\Big|_z,H_1(z)\right\}.
\end{equation*}
 Note that $E_z$ is an integral element based at $z$ if and only if $d_1=-c_2$ and $d_3=0$, because
\begin{multline*}
    (\beta\wedge \sigma_{11})_z\left(\Gamma_0(z),W_1(z)+d_1 \frac{\partial}{\partial s}\Big|_z+d_2 \frac{\partial}{\partial P}\Big|_z+d_3\frac{\partial}{\partial Q}\Big|_z,H_1(z)\right)\\=\beta_z\left(\Gamma_0(z),W_1(z)+d_1 \frac{\partial}{\partial s}\Big|_z+d_2 \frac{\partial}{\partial P}\Big|_z+d_3\frac{\partial}{\partial Q}\Big|_z\right)Q(z)-\beta_z(\Gamma_0(z),H_1(z))(P(z)+d_1)\\=-\beta_z(\Gamma_0(z),H_1(z))(c_2+d_1),
\end{multline*}
and
\begin{multline*}
    (\alpha\wedge \delta \sigma_{11})_z\left(\Gamma_0(z),W_1(z)+d_1 \frac{\partial}{\partial s}\Big|_z+d_2 \frac{\partial}{\partial P}\Big|_z+d_3\frac{\partial}{\partial Q}\Big|_z,H_1(z)\right)\\=\alpha_z(\Gamma_0(z)) d_3+\alpha_z(H_1(z))Q(z)=\alpha_z(\Gamma_0(z)) d_3,
\end{multline*}
for every $\alpha\in \Lambda^1(\overline{A})$ and $\beta\in \Lambda^2(\overline{A})$. From now on, we assume that  $d_1=-c_2$ and $d_3=0$, so that
\begin{equation*}
    E_z\coloneqq \spn\left\{\Gamma_0(z),W_1(z)-c_2 \frac{\partial}{\partial s}\Big|_z+d_2 \frac{\partial}{\partial P}\Big|_z,H_1(z)\right\}
\end{equation*}
is an integral element. We claim that there exists an integral manifold $j\colon X\to \overline{M}$ of $\mathcal{I}$ such that $J((\mathcal{L}^j \overline{A})_x)=E_z$ for all $d_2\in \mathbb{R}$, where $x\in X$ is a point such that $z=j(x)$.

Let
\begin{equation*}
    (E_1)_z\coloneqq \spn\{H_1(z)\},\quad (E_2)_z\coloneqq \spn\left\{\Gamma_0(z),H_1(z)+W_1(z)-c_2 \frac{\partial}{\partial s}\Big|_z+d_2\frac{\partial}{\partial P}\Big|_z\right\}.
\end{equation*}
We will show that $(E_1)_z\subset (E_2)_z\subset E_z$ is a $1$-ordinary integral flag. It follows from \Cref{prop:subspaceintelement} that $(E_1)_z$ and $(E_2)_z$ are integral elements. We now show that $(E_1)_z$ is K{\"a}hler-ordinary. Let $\Omega=\Theta^1$. Then $\Omega|_{(E_1)_z}\neq 0$ because $\Omega(H_1)=1$. We have that
\begin{equation*}
    \mathcal{F}_{\Omega}(\mathcal{I})=\spn\{(\sigma_{11})_{\Omega}\}.
\end{equation*}
Every $(F_1)_{\bar{m}}\in G_1(\overline{A},\Omega)$ has a unique basis of the form
\begin{multline*}
    X_1((F_1)_{\bar{m}})=H_1(\bar{m})+p^1((F_1)_{\bar{m}}) \Gamma_0(\bar{m})+p^2((F_1)_{\bar{m}}) W_1(\bar{m})\\+p^4((F_1)_{\bar{m}}) \frac{\partial}{\partial s}\Big|_{\bar{m}}+p^5((F_1)_{\bar{m}}) \frac{\partial}{\partial P}\Big|_{\bar{m}}+p^6((F_1)_{\bar{m}}) \frac{\partial}{\partial Q}\Big|_{\bar{m}}.
\end{multline*}
A coordinate system on $G_1(\overline{A},\Omega)$ is hence given by $t,w,s,P,Q,p^1,p^2,p^4,p^5,p^6$. Note that
\begin{equation*}
    (\sigma_{11})_{\Omega}((F_1)_{\bar{m}})=\frac{\sigma_{11}(X_1)}{\Omega(X_1)}=p^4+P p^2+Q.
\end{equation*}
Note that
\begin{equation*}
    Z(\mathcal{F}_{\Omega}(\mathcal{I}))=V_1(\mathcal{I},\Omega)=\{(F_1)_{\bar{m}}\in G_1(\overline{A},\Omega)\mid (\sigma_{11})_{\Omega}((F_1)_{\bar{m}})=0\},
\end{equation*}
and therefore $(E_1)_z$ is K{\"a}hler-ordinary.

We now show that $(E_2)_z$ is K{\"a}hler-ordinary. Then $\Omega=\Gamma^0\wedge \Theta^1\in \Lambda^2(\overline{A})$ is such that $\Omega|_{(E_2)_z}\neq 0$. We have that
\begin{equation*}
    \mathcal{F}_{\Omega}(\mathcal{I})=\{(\alpha\wedge \sigma_{11})_{\Omega}+(b\, \delta \sigma_{11})_{\Omega}\mid \alpha\in \Lambda^1(\overline{A}),b\in C^{\infty}(\overline{M})\}.
\end{equation*}
Every $(F_2)_{\bar{m}}\in G_2(\overline{A},\Omega)$ has a unique basis of the form
\begin{multline*}
    Y_1((F_2)_{\bar{m}})=\Gamma_0(\bar{m})+p_1^2((F_2)_{\bar{m}}) W_1(\bar{m})+p_1^4((F_2)_{\bar{m}}) \frac{\partial}{\partial s}\Big|_{\bar{m}}\\+p_1^5((F_2)_{\bar{m}}) \frac{\partial}{\partial P}\Big|_{\bar{m}}+p_1^6((F_2)_{\bar{m}}) \frac{\partial}{\partial Q}\Big|_{\bar{m}},
\end{multline*}
\begin{multline*}
    Y_2((F_2)_{\bar{m}})=H_1(\bar{m})+p_2^2((F_2)_{\bar{m}}) W_1(\bar{m})+p_2^4((F_2)_{\bar{m}}) \frac{\partial}{\partial s}\Big|_{\bar{m}}\\+p_2^5((F_2)_{\bar{m}}) \frac{\partial}{\partial P}\Big|_{\bar{m}}+p_2^6((F_2)_{\bar{m}}) \frac{\partial}{\partial Q}\Big|_{\bar{m}}.
\end{multline*}
The functions $t,w,s,P,Q,p_i^j$ form a coordinate system on $G_2(\overline{A},\Omega)$. Note that
\begin{align*}
    (\alpha\wedge \sigma_{11})_{\Omega}((F_2)_{\bar{m}})=\frac{(\alpha\wedge \sigma_{11})(Y_1,Y_2)}{\Omega(Y_1,Y_2)}&=\alpha(Y_1)\sigma_{11}(Y_2)-\alpha(Y_2)\sigma_{11}(Y_1)\\&=\alpha(Y_1)(p_2^4+P p_2^2)-\alpha(Y_2)(p_1^4+P p_1^2+Q).
\end{align*}
and
\begin{equation*}
    (\delta \sigma_{11})_{\Omega}((F_2)_{\bar{m}})=p_2^2(p_1^5+Q)-p_2^5 p_1^2+p_1^6.
\end{equation*}
It follows that
\begin{equation*}
    Z(\mathcal{F}_{\Omega}(\mathcal{I}))=\{(F_2)_{\bar{m}}\in G_2(\overline{A},\Omega)\mid p_2^4+P p_2^2=p_1^4+P p_1^2+Q=p_2^2(p_1^5+Q)-p_2^5 p_1^2+p_1^6=0\}.
\end{equation*}
The differentials of $p_2^4+P p_2^2$, $p_1^4+P p_1^2+Q$ and $p_2^2(p_1^5+Q)-p_2^5 p_1^2+p_1^6$ are linearly independent, hence $(E_2)_z$ is K{\"a}hler-ordinary.

We now show that both $(E_1)_z$ and $(E_2)_z$ are K{\"a}hler-regular. Let $(F_1)_{\bar{m}}=\langle X_1\rangle$ be any $1$-dimensional integral element. Then
\begin{equation*}
    H(\langle X_1\rangle)=\left\{v\in \overline{A}_{\bar{m}}\mid \alpha \wedge \sigma_{11}(v,X_1)=0, \delta \sigma_{11}(v,X_1)=0~\mathrm{for}~\alpha\in \Lambda^1(\overline{A})\right\}.
\end{equation*}
Note that
\begin{equation*}
    (\alpha\wedge \sigma_{11})(v,X_1)=\alpha(v)\sigma_{11}(X_1)-\alpha(X_1)\sigma_{11}(v)=-\alpha(X_1)\sigma_{11}(v),
\end{equation*}
which is zero for all $\alpha\in \Lambda^1(\overline{A})$ if and only if $\sigma_{11}(v)=0$. If
\begin{equation*}
    v=\left(a_1 \Gamma_0+a_2 W_1+a_3 H_1+a_4 \frac{\partial}{\partial s}+a_5 \frac{\partial}{\partial P}+a_6 \frac{\partial}{\partial Q}\right)\Big|_{\bar{m}},
\end{equation*}
then this equation is given by
\begin{equation*}
    a_4+P a_2+Q a_3=0.
\end{equation*}
Note that
\begin{align*}
    \delta \sigma_{11}(v,X_1)&=\delta \sigma_{11}\left(v,H_1+p^1 \Gamma_0+p^2 W_1+p^4 \frac{\partial}{\partial s}+p^5 \frac{\partial}{\partial P}+p^6 \frac{\partial}{\partial Q}\right)\\&=p^2(a_5+Q a_1)-a_2(p^5+Q p^1)+a_6-a_3 p^6=0
\end{align*}
We thus have that
\begin{equation*}
    H(\langle X_1\rangle)=\{(a_1,\dots,a_6)\in \mathbb{R}^6\mid P a_2+Q a_3+a_4=Q p^2 a_1-(p^5+Q p^1)a_2-p^6 a_3+p^2 a_5+a_6=0\},
\end{equation*}
which gives that $\dim H((F_1)_{\bar{m}})=4$ for all $(F_1)_{\bar{m}}\in G_1(\overline{A},\Omega)$. Hence, $(E_1)_{z}$ is K{\"a}hler-regular.

We now check that $(E_2)_{z}$ is K{\"a}hler-regular. If $(F_2)_{\bar{m}}=\langle Y_1,Y_2\rangle$ is any $2$-dimensional integral element, then note that
\begin{equation*}
    \beta\wedge \sigma_{11}(v,Y_1,Y_2)=\beta(v,Y_1)\sigma_{11}(Y_2)-\beta(v,Y_2)\sigma_{11}(Y_1)+\beta(Y_1,Y_2)\sigma_{11}(v)=\beta(Y_1,Y_2)\sigma_{11}(v),
\end{equation*}
for every $\beta\in \Lambda^2(\overline{A})$, hence $\beta\wedge \sigma_{11}(v,Y_1,Y_2)=0$ for all $\beta$ if and only if $\sigma_{11}(v)=0$. Also, note that
\begin{align*}
    \alpha\wedge \delta \sigma_{11}(v,Y_1,Y_2)&=\alpha(v)\delta \sigma_{11}(Y_1,Y_2)-\alpha(Y_1)\delta \sigma_{11}(v,Y_2)+\alpha(Y_2)\delta \sigma_{11}(v,Y_1)\\&=-\alpha(Y_1)\delta \sigma_{11}(v,Y_2)+\alpha(Y_2)\delta \sigma_{11}(v,Y_1),
\end{align*}
which is zero for all $\alpha\in \Lambda^1(\overline{A})$ if and only if $\delta \sigma_{11}(v,Y_1)=\delta \sigma_{11}(v,Y_2)=0$. This gives that
\begin{multline*}
    H(\langle Y_1,Y_2\rangle)=\Bigg\{(a_1,\dots,a_6)\in \mathbb{R}^6\mid P a_2+Q a_3+a_4\\=Q p_1^2 a_1-(p_1^5+Q)a_2-p_1^6 a_3+p_1^2 a_5+a_6=Q p_2^2 a_1-p_2^5 a_2-p_2^6 a_3+p_2^2 a_5+a_6=0\Bigg\}
\end{multline*}
Note that the last two equations are linearly dependent if and only if $p_1^2=p_2^2$, $p_1^6=p_2^6$ and $p_1^5+Q=p_2^5$. We therefore have that
\begin{equation*}
    \dim H((F_2)_{\bar{m}})=\begin{cases} 4,\quad \mathrm{if}~p_1^2=p_2^2,p_1^6=p_2^6,p_1^5+Q=p_2^5, \\ 3,\quad \mathrm{otherwise}.
        
    \end{cases}
\end{equation*}

Note that $\dim(H((E_2)_z))=3$, since $p_1^2((E_2)_z)=0$ and $p_2^2((E_2)_z)=1$. It follows that $r$ is a constant function on a neighborhood of $(E_2)_z$, hence $(E_2)_z$ is K{\"a}hler-regular.

Therefore, by the Cartan--K{\"a}hler theorem, there exists an integral manifold $j\colon X\to \overline{M}$ such that $J((\mathcal{L}^j\overline{A})_x)=E_z$ where $x\in X$ is a point such that $z=j(x)$ for all points of the form $z=(0,0,c_1,c_2,0)$, where $c_1\neq 0$ and $c_2\in \mathbb{R}$. 

As before, the Cartan--K{\"a}hler theorem only guarantees the existence of an integral manifold, but does not give one explicitly. We now claim that
\begin{equation}\label{eqn:specialj}
    j\colon M\longrightarrow \overline{M},\quad (t,w)\longmapsto \left(t,w,s=c_1 e^{-h(w)},P=c_1 e^{-h(w)} h'(w),Q=0\right),
\end{equation}
where $h(w)\coloneqq \frac{c_2}{c_1}w+\frac{d_2 c_1+c_2^2}{2c_1^2} w^2$ is such an integral manifold. For a generic element $\bar{a}_{\bar{m}}=a_1\Gamma_0+a_2 W_1+a_3 H_1+a_4 \frac{\partial}{\partial s}+a_5 \frac{\partial}{\partial P}+a_6 \frac{\partial}{\partial Q}\in \overline{A}$, one has that
\begin{equation*}
    \bar{\rho}(\bar{a}_{\bar{m}})=a_1\frac{\partial}{\partial t}+a_2 \frac{\partial}{\partial w}+a_4 \frac{\partial}{\partial s}+a_5 \frac{\partial}{\partial P}+a_6 \frac{\partial}{\partial Q}.
\end{equation*}
A generic element $v_m=b_1 \frac{\partial}{\partial t}+b_2\frac{\partial}{\partial w}\in \mathrm{T}M$ transforms under (the tangent map of) $j$ via
\begin{equation*}
    \mathrm{T}j(v_m)=b_1\frac{\partial}{\partial t}+b_2 \frac{\partial}{\partial w}-b_2 c_1 e^{-h(w)}h'(w) \frac{\partial}{\partial s}+b_2 c_1 e^{-h(w)}\left(h''(w)-(h'(w))^2\right)\frac{\partial}{\partial P}.
\end{equation*}
Points in $\mathcal{L}^j \overline{A}$ must satisfy the equations $a_1=b_1$, $a_2=b_2$, $a_4=-b_2 c_2 e^{-h(w)}h'(w)$, $a_5=b_2 c_1 e^{-h(w)}\left(h''(w)-(h'(w))^2\right)$ and $a_6=0$. Hence, a basis of $\mathcal{L}^j \overline{A}$ is given by
\begin{equation*}
    \left\{\left(\Gamma_0,\frac{\partial}{\partial t}\right),\left(W_1-c_1 e^{-h(w)}h'(w)\frac{\partial}{\partial s}+c_1 e^{-h(w)}\left(h''(w)-(h'(w))^2\right)\frac{\partial}{\partial P},\frac{\partial}{\partial w}\right),(H_1,0)\right\}.
\end{equation*}
Applying $J$ and using the values of $h$, $h'$ and $h''$ at $0$ gives
\begin{equation*}
    J((\mathcal{L}^j \overline{A})_{(0,0)})=\spn\left\{\Gamma_0(j(0,0)),W_1(j(0,0))-c_2 \frac{\partial}{\partial s}\Big|_{j(0,0)}+d_2 \frac{\partial}{\partial P}\Big|_{j(0,0)},H_1(j(0,0))\right\}=E_z,
\end{equation*}
as required.

\begin{remark}
The natural choice of $1$-integral flag $(E_1)_z\subset (\widetilde{E}_2)_z\subset E_z$, where
\begin{equation*}
    (E_1)_z\coloneqq \spn\{H_1(z)\},\quad (\widetilde{E}_2)_z\coloneqq \spn\{\Gamma_0(z),H_1(z)\},
\end{equation*}
does not work here, because $(\widetilde{E}_2)_z$ is not K{\"a}hler-regular. Indeed, $\dim H((\widetilde{E}_2)_z)=4$. If $V=\{(t,w,s,P,Q,p_i^j)\mid p_1^2=p_2^2,p_1^6=p_2^6,p_1^5+Q=p_2^5\}$, then note that $\dim(V)\neq \dim(G_2(\overline{A},\Omega))$. Hence any subset of $V$ is not open, and therefore $(E_2)_z$ is not K{\"a}hler-regular.
\end{remark}

The integral manifold $j$ in \eqref{eqn:specialj} has a further property that is not automatically guaranteed by the Cartan--K{\"a}hler Theorem. It is, in fact, a section of the fiber bundle $p\colon \overline{M}\to M$ and satisfies, for this reason, an \emph{independence condition}, namely $j^{\ast}(\mathrm{d}t\wedge \mathrm{d}w^1\wedge \dots\wedge \mathrm{d}w^n)\neq 0$, where $(t,w^i)$ are the coordinates of $M=\mathbb{R}\times \mathfrak{g}$ (see the next section for more details). This means that $j$ is indeed of the ``special type'' which induces a multiplier matrix for the invariant inverse problem (see the reference we had made earlier to Thm. 2 of \cite{mestdag2026edsla}).

%% file: sections/outlook.tex
We end the paper with some different directions for future research.

First, it may be possible to extend \emph{Cartan's test} to (transitive) Lie algebroids. For EDS on manifolds, it expresses the condition of integrability in terms of certain integers (called \emph{characters}).  With such an extension in place, one could then attempt to extend the Frobenius theorem to transitive Lie algebroids in the analytic category (in other words, showing that all integral elements $E_z\subset \overline{A}_z$ of dimension $N-s$ are $\dim(\ker(\bar{\rho}_z))$-ordinary for an EDS on a transitive Lie algebroid of rank $N$, generated algebraically by $s$ linearly independent $1$-forms). 

Second, in the classical theory of EDS on manifolds, one often works with an extra assumption called an \emph{independence condition}. This means that, given a choice of a differential $n$-form $\Omega$ on $\overline{M}$, integral manifolds $i\colon M\to \overline{M}$ must additionally be $n$-dimensional, and they must satisfy $i^{\ast}(\Omega)\neq 0$ at each point of $M$ (besides the condition that $i^{\ast}\theta=0$ for any $\theta\in \mathcal{I}\subset \Omega^{\ast}(\overline{M})$). We have explained in \cite{mestdag2026edsla} that if $p\colon \overline M \to M$ is a fiber bundle, any integral manifold $i\colon M\to \overline M$ that is also a section implicitly satisfies a certain independence condition. For the specific case of an EDS with an independence condition, finding the characters in Cartan's test is easy using \emph{tableaux}. Roughly speaking, a tableau is a matrix encoding such an EDS. It also serves as a computational tool, to compute the characters using only linear algebra \cite{ivey2016cartan,mckay2017eds}. We believe that extending tableaux to transitive Lie algebroids will be very useful in determining existence of integral manifolds. A natural starting point would be to study extensions of so-called \emph{linear Pfaffian systems} (a linear EDS generated by only one-forms together with an independence condition) in the Lie algebroid setting, since the theory of tableaux is simplest in this situation (in the classical theory of EDS).

A third possible extension of this work is motivated by our discussion on the invariant inverse problem of the calculus of variations. While in \Cref{sec:inv-inverse} we have shown that our version of the Cartan--K{\"a}hler theorem is \emph{consistent} with the fact that a $G$-invariant Lagrangian exists for the geodesic spray of the canonical connection on the Lie group $G=\mathbb{R}$, we were not able to \emph{conclude} using the Cartan--K{\"a}hler theorem that a reduced multiplier matrix exists. This is because the existence of a reduced multiplier requires the additional condition that there exists an integral manifold $j\colon M\to \overline{M}$ which is a \emph{section} of $p\colon \overline{M}\to M$ (and therefore satisfies an independence condition), which is not guaranteed by the Cartan--K{\"a}hler theorem. We would therefore like to devise techniques to solve this problem using our version of the Cartan--K{\"a}hler theorem. Since this is an EDS with an independence condition, we suspect that an extension of the theory of tableaux will be very helpful for this.

If such an approach is successful, we may be able to study more nontrivial cases of this problem. Following the notation in \cite{mestdag2026edsla}, the natural starting point for study is the case where $\langle \Sigma^0\rangle_{\mathrm{alg}}$ is a differential ideal. In the future, a classification structure (analogous to Douglas' classification \cite{douglas1941classification}, or the classification in \cite{do2016invprob}) for this problem may be identified.

%% file: sections/comparison.tex
The current framework defines integral manifolds by starting from a given transitive Lie algebroid $\bar{\tau}\colon \overline{A}\to \overline{M}$. We then constructed a new Lie algebroid $\tau\colon A\to M$ by setting $A\coloneqq \mathcal{L}^i \overline{A}$ to be the $i$-prolongation Lie algebroid, where $i\colon M\to \overline{M}$ is an immersion. We then said that $i\colon M\to \overline{M}$ is an integral manifold if $I^{\ast}\theta=0$ for all $\theta\in \mathcal{I}$, where $I$ is the $i$-induced map. Integral manifolds in this framework are not required to satisfy any independence condition, i.e., if $n$ is the dimension of the integral manifolds one must find, there is no requirement that $i^{\ast}(\Omega)\neq 0$ for some $\Omega\in \Omega^n(\overline{M})$.

Howewer, in a previous paper \cite{mestdag2026edsla}, we defined integral manifolds by instead beginning with the Lie algebroid $\tau\colon A\to M$. We then constructed $\bar{\tau}\colon \overline{A}\to \overline{M}$ by defining $\overline{A}\coloneqq \mathcal{L}^p A$, where $p\colon \overline{M}\to M$ is a fiber bundle. In this framework, $i$ is no longer an arbitrary immersion, but a section of $p$. In order to distinguish the two definitions, we use $s$ instead of $i$. The definition given there is as follows:
\begin{definition}
Let $s\colon M\to \overline{M}$ be a section of $p$. We say that $s$ is an \emph{integral manifold} of a differential ideal $\mathcal{I}$ if for all $\theta\in \mathcal{I}\subset \Lambda^{\ast}(\mathcal{L}^p A)$, the equation $S^{\ast}\theta=0$ holds in $\Lambda^{\ast}(A)$, where $S$ is the $s$-induced section, i.e.,
\begin{equation*}
    S(a_m)\coloneqq ( a_m, \mathrm{T}s(\rho(a_m))).
\end{equation*}
\end{definition}
This approach generalizes the definition of an integral manifold of an EDS with an independence condition. Indeed, if $p\colon \overline{M}\to M$, $(x^i,y^{\mu})\mapsto (x^i)$ is a fiber bundle, then any section $s$ of $p$ satisfies $s^{\ast}(\mathrm{d}x^1\wedge \dots \wedge \mathrm{d}x^n)\neq 0$.

We now claim that in fact, $I=S$ in the case where $i$ is a section of a fiber bundle. Recall that $I\colon A=\mathcal{L}^i \overline{A}\to \overline{A}$ was given by $(\bar{a}_{\bar{m}},w_m)\mapsto \bar{a}_{\bar{m}}$. To this end, we first prove the following lemma:
\begin{lemma}
Let $\tau\colon A\to M$ be a Lie algebroid. Then $A\simeq \mathcal{L}^{s}(\mathcal{L}^p A)$.
\end{lemma}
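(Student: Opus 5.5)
The plan is to write down an explicit vector bundle map $\Phi\colon A\to \mathcal{L}^{s}(\mathcal{L}^p A)$ over the identity of $M$, show it is a fiberwise linear bijection, and then check that it intertwines the anchors and the brackets. Here $p\colon \overline{M}\to M$ is the fiber bundle, $s$ is a section (so $p\circ s=\mathrm{id}_M$), and
\begin{equation*}
    \mathcal{L}^pA=\{(a_m,v_{\bar m})\mid \rho(a_m)=\mathrm{T}p(v_{\bar m})\},
\end{equation*}
which is a Lie algebroid because $p$ is a submersion. Unwinding the definitions,
\begin{equation*}
    \mathcal{L}^{s}(\mathcal{L}^pA)=\{((a_m,v_{\bar m}),w_{m'})\mid (a_m,v_{\bar m})\in\mathcal{L}^pA,\ v_{\bar m}=\mathrm{T}s(w_{m'})\}.
\end{equation*}
Since $\bar m=s(m')$ and $p\circ s=\mathrm{id}$, we get $m=m'$ and $\rho(a_m)=\mathrm{T}p(\mathrm{T}s(w_{m}))=w_m$. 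Hence every element has the form $((a_m,\mathrm{T}s(\rho(a_m))),\rho(a_m))$. This suggests defining
\begin{equation*}
    \Phi(a_m)\coloneqq \big((a_m,\mathrm{T}s(\rho(a_m))),\rho(a_m)\big).
\end{equation*}
This is well defined, because $\mathrm{T}p\,\mathrm{T}s\,\rho(a_m)=\rho(a_m)$. It is smooth and fiberwise linear. Its inverse is $((a,v),w)\mapsto a$, which is injective by the computation just made, since $w$ and $v$ are determined by $a$. So $\Phi$ is a vector bundle isomorphism over $\mathrm{id}_M$. In the notation of the Appendix, composing with the induced map $I$ to $\mathcal{L}^pA$ recovers $S(a_m)=(a_m,\mathrm{T}s(\rho(a_m)))$, which is what will later give $I=S$.

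Next I would check the anchors. By \eqref{eqn:anchorprolongation}, the anchor of the outer prolongation is the last component. Therefore
\begin{equation*}
    \rho^{s}(\Phi(a_m))=\rho(a_m),
\end{equation*}
so $\Phi$ intertwines the anchors.

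For the brackets, both sides satisfy the Leibniz rule and $\Phi$ preserves anchors. It therefore suffices to check $\Phi\llbracket\sigma_1,\sigma_2\rrbracket=\llbracket\Phi\sigma_1,\Phi\sigma_2\rrbracket$ locally, for $\sigma_1,\sigma_2$ in a local basis of sections of $A$. The key step is to show that $\Phi(\sigma)$ is a projectable section in the sense of \eqref{eqn:bracketprolongation}. That is, I need a section of $\mathcal{L}^pA$ of the form $(\sigma\circ p,X)$ whose composition with $s$ equals $(\sigma,\mathrm{T}s(\rho(\sigma)))$. For this, $X$ must be a vector field on $\overline{M}$ that is $p$-related to $\rho(\sigma)$ and satisfies $X\circ s=\mathrm{T}s\circ\rho(\sigma)$.

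I expect constructing this $X$ to be the only real obstacle, and I would handle it locally. Choose bundle coordinates $(x^i,y^\mu)$ on $\overline{M}$. Replace $y^\mu$ by $y^\mu-s^\mu(x)$, so that $s(M)=\{y=0\}$. If $\rho(\sigma)=\rho^i(x)\partial_{x^i}$, take $X=\rho^i(x)\partial_{x^i}$ in these coordinates. This $X$ is $p$-related to $\rho(\sigma)$ and tangent to $s(M)$ with the right values there. A partition of unity would globalize it, though the local statement suffices for the bracket check.

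With these extensions $X_1,X_2$, the pair $(\sigma_j\circ p,X_j)$ is a projectable section of $\mathcal{L}^pA$ over $\sigma_j$. By \eqref{eqn:bracketprolongation}, its bracket is $(\llbracket\sigma_1,\sigma_2\rrbracket\circ p,[X_1,X_2])$. Applying \eqref{eqn:bracketprolongation} once more for the $s$-prolongation gives
\begin{equation*}
    \llbracket\Phi\sigma_1,\Phi\sigma_2\rrbracket=\big((\llbracket\sigma_1,\sigma_2\rrbracket,[X_1,X_2]\circ s),[\rho\sigma_1,\rho\sigma_2]\big).
\end{equation*}
Each $X_j$ is $s$-related to $\rho(\sigma_j)$, so $[X_1,X_2]\circ s=\mathrm{T}s[\rho\sigma_1,\rho\sigma_2]=\mathrm{T}s\,\rho\llbracket\sigma_1,\sigma_2\rrbracket$. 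Together with $[\rho\sigma_1,\rho\sigma_2]=\rho\llbracket\sigma_1,\sigma_2\rrbracket$, this is exactly $\Phi\llbracket\sigma_1,\sigma_2\rrbracket$. Hence $\Phi$ is an isomorphism of Lie algebroids, which proves $A\simeq\mathcal{L}^{s}(\mathcal{L}^pA)$.
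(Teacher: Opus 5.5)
Your proposal is correct and uses the same explicit identification as the paper, $a_m\leftrightarrow\big((a_m,\mathrm{T}s(\rho(a_m))),\rho(a_m)\big)$, which is forced by $\mathrm{T}p\circ\mathrm{T}s=\mathrm{id}$; the paper simply writes the map in the opposite direction, $\mathcal{L}^{s}(\mathcal{L}^pA)\to A$. The paper stops at this vector bundle bijection, so your additional checks of the anchors and of the brackets (via the projectable extensions $(\sigma\circ p,X)$) go beyond it and justify reading $\simeq$ as an isomorphism of Lie algebroids.
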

\begin{proof}
We write the identification explicitly. Note that $\mathcal{L}^p A$ consists of pairs $(a_m,\bar{v}_{\bar{m}})\in A\times \mathrm{T}\overline{M}$ such that $\rho(a_m)=\mathrm{T}p(\bar{v}_{\bar{m}})$. The anchor of $\mathcal{L}^p A$ is
\begin{equation*}
    \bar{\rho}\colon \mathcal{L}^p A\longrightarrow \mathrm{T}\overline{M},\quad (a_m,\bar{v}_{\bar{m}})\longmapsto \bar{v}_{\bar{m}}.
\end{equation*}
We now determine $\mathcal{L}^s (\mathcal{L}^p A)$. An element of this space is a pair $((a_m,\bar{v}_{\bar{m}}),w_m)\in \mathcal{L}^p A\times \mathrm{T}M$ subject to $\bar{v}_{\bar{m}}=\mathrm{T}s(w_m)$. Now, note that
\begin{equation*}
    \rho(a_m)=\mathrm{T}p(\bar{v}_{\bar{m}})=\mathrm{T}p(\mathrm{T}s(w_m))=w_m.
\end{equation*}
Hence, the following is an isomorphism
\begin{equation*}
    \Phi\colon \mathcal{L}^s(\mathcal{L}^p A)\longrightarrow A,\quad ((a_m,\mathrm{T}s(\rho(a_m))),\rho(a_m))\longmapsto a_m.\qedhere
\end{equation*}
\end{proof}
We now prove the equivalence of the two frameworks in the case when one imposes that $i$ is a section.
\begin{proposition}
Suppose that $\bar{\tau}\colon \overline{A}\to \overline{M}$ is a transitive Lie algebroid. If $i\colon M\to \overline{M}$ is a section of a fiber bundle $p\colon \overline{M}\to M$, then $I=S$.
\end{proposition}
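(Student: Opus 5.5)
The plan is to compare the two maps on an arbitrary element of $A$, using the identification $\Phi\colon \mathcal{L}^s(\mathcal{L}^p A)\to A$ from the preceding lemma. Here $\overline{A}=\mathcal{L}^p A$, the section $s=i$, and the map $I$ is defined on $\mathcal{L}^i\overline{A}$. The map $S$, on the other hand, is defined on $A$ itself. So the statement $I=S$ is to be read as $I=S\circ\Phi$, i.e.\ the diagram formed by $\Phi$, $I$ and $S$ commutes.

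First I recall what an element of $\mathcal{L}^i\overline{A}$ looks like. It is a pair $(\bar a_{\bar m},w_m)$ with $\bar a_{\bar m}\in \overline{A}=\mathcal{L}^p A$ and $\bar\rho(\bar a_{\bar m})=\mathrm{T}i(w_m)$. Writing $\bar a_{\bar m}=(a_m,\bar v_{\bar m})$ with $\rho(a_m)=\mathrm{T}p(\bar v_{\bar m})$, the anchor condition reads $\bar v_{\bar m}=\mathrm{T}i(w_m)$. Since $p\circ i=\mathrm{id}_M$, we get $\rho(a_m)=\mathrm{T}p(\mathrm{T}i(w_m))=w_m$. Hence every element has the form
\[
\bigl((a_m,\mathrm{T}i(\rho(a_m))),\rho(a_m)\bigr),
\]
and $\Phi$ sends it to $a_m$. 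Bijectivity of $\Phi$ follows from this description together with the lemma. Along the way I will check that the base points match: $\bar m=i(m)$ because $\bar v_{\bar m}=\mathrm{T}i(w_m)$ lies over $i(m)$.

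Second, I evaluate both sides on this element. By definition, $I$ applied to it is the first component $(a_m,\mathrm{T}i(\rho(a_m)))$. By the definition of the induced section, $S(\Phi(\cdot))=S(a_m)=(a_m,\mathrm{T}s(\rho(a_m)))$. Since $s=i$, these coincide, so $I=S\circ\Phi$. Under the identification of $A$ with $\mathcal{L}^i\overline{A}$, this is the claim, and it also gives $I^{\ast}=S^{\ast}$ on forms. That last point shows the two notions of integral manifold agree for sections.

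There is no real obstacle here; the argument is bookkeeping. The only delicate point is making the identification precise. I must note that transitivity of $\overline{A}$, or equivalently the fact that $p$ is a submersion, guarantees that $\mathcal{L}^i\overline{A}$ is a genuine vector bundle. I must also verify that $\Phi$ is a vector bundle isomorphism covering the identity of $M$, which is clear because it is linear on fibres and has the explicit inverse $a_m\mapsto ((a_m,\mathrm{T}i(\rho(a_m))),\rho(a_m))$.
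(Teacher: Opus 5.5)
Your proposal is correct and follows the paper's proof: both identify $\mathcal{L}^i\overline{A}$ with $A$ via the isomorphism $\Phi$ from the preceding lemma and observe that $I$ and $S$ both return $(a_m,\mathrm{T}i(\rho(a_m)))$. Writing the claim as $I=S\circ\Phi$ just makes explicit the identification the paper leaves implicit in its overloaded notation. One small slip: transitivity of $\overline{A}$ is not equivalent to $p$ being a submersion (for $\overline{A}=\mathcal{L}^pA$ it amounts to transitivity of $A$), but either condition suffices for $\mathcal{L}^i\overline{A}$ to be a vector bundle, so the argument is unaffected.
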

\begin{proof}
Note that
\begin{equation*}
    S(a_m)=S(\bar{a}_{\bar{m}},w_m)=((\bar{a}_{\bar{m}},w_m),\mathrm{T}s(\rho(\bar{a}_{\bar{m}},w_m)))=((\bar{a}_{\bar{m}},w_m),\mathrm{T}s(w_m))=(a_m,\bar{v}_{\bar{m}})=\bar{a}_{\bar{m}},
\end{equation*}
where we have used that $\bar{v}_{\bar{m}}=\mathrm{T}s(w_m)$, and the notation $a_m=(\bar{a}_{\bar{m}},w_m)$ and $\bar{a}_{\bar{m}}=(a_m,\bar{v}_{\bar{m}})$.
\end{proof}
As a result, also $I^{\ast}=S^{\ast}$.